\newcommand{\bigcupdot}{\charfusion[\mathop]{\bigcup}{\cdot}}
\def\moverlay{\mathpalette\mov@rlay}
\def\mov@rlay#1#2{\leavevmode\vtop{%
   \baselineskip\z@skip \lineskiplimit-\maxdimen
   \ialign{\hfil$\m@th#1##$\hfil\cr#2\crcr}}}
\newcommand{\charfusion}[3][\mathord]{
    #1{\ifx#1\mathop\vphantom{#2}\fi
        \mathpalette\mov@rlay{#2\cr#3}
      }
    \ifx#1\mathop\expandafter\displaylimits\fi}
\def\VR{\kern-\arraycolsep\strut\vrule &\kern-\arraycolsep}
\def\vr{\kern-\arraycolsep & \kern-\arraycolsep}
\newcommand{\be}{\begin{enumerate}}
\newtheorem{theorem}{Theorem}
\newtheorem*{theoremnonum}{Theorem}
\newtheorem*{lemmanonum}{Lemma}
\newtheorem*{propnonum}{Proposition}
\newtheorem{prop}{Proposition}
\newtheorem{lemma}[prop]{Lemma}
\newtheorem{corollary}[prop]{Corollary}
\theoremstyle{definition}
\newtheorem{definition}[prop]{Definition}
\newtheorem{rmk}{Remark}
\newtheorem{obs}{Observation}
\newtheorem{ex}{Example}
\newenvironment{example}[1][]{\begin{ex}[#1]\pushQED{\qed}}{\popQED \end{ex}}
\tikzset{join/.code=\tikzset{after node path={%
\ifx\tikzchainprevious\pgfutil@empty\else(\tikzchainprevious)%
edge[every join]#1(\tikzchaincurrent)\fi}}}
\tikzset{>=stealth',every on chain/.append style={join},
         every join/.style={->}}
\tikzstyle{labeled}=[execute at begin node=$\scriptstyle,
\tikzset{join/.code=\tikzset{after node path={%
\ifx\tikzchainprevious\pgfutil@empty\else(\tikzchainprevious)%
edge[every join]#1(\tikzchaincurrent)\fi}}}
\begin{document}
\title{Interleaving Distance as a Limit}

\author{Killian Meehan, David Meyer}



\maketitle
\setcounter{tocdepth}{1}

\begin{abstract}
Persistent homology is a way of determining the topological properties of a data set.  It is well known that each persistence module admits the structure of a representation of a finite totally ordered set.  In previous work, the authors proved an analogue of the isometry theorem of Bauer and Lesnick for representations of a certain class of finite posets.  The isometry was between  the interleaving metric of Bubenik, de Silva and Scott and  a bottleneck metric which incorporated algebraic information.  The key step in both isometry theorems was proving a matching theorem, that an interleaving gives rise to a matching of the same height.  In this paper we continue this work, restricting to those posets which arise from data while making more general the choice of metrics.  We first show that while an interleaving always produces a matching, for an arbitrary choice of weights it will not produce one of the same height.  We then show that although the matching theorem fails in this sense, one obtains a "shifted" matching (of the correct height) from an interleaving by enlarging the category.  We then prove an isometry theorem on this extended category.  As an application, we make precise the way in which representations of finite partially ordered sets approximate persistence modules.  Specifically, given two finite point clouds of data, we associate a generalized sequence (net) of algebras over which the persistence modules for both data sets can be compared.  We recover the classical interleaving distance uniformly by taking limits.
\end{abstract}
\section{Introduction}
\subsection{Persistent Homology}
\noindent
Informally, a \emph{generalized persistence module} is a representation of a poset $P$ with values in a category $\mathcal{D}$.  More precisely, if $\mathcal{D}$ is a category, a generalized persistence module $M$ with values in $\mathcal{D}$ assigns an object $M(x)$ of $\mathcal{D}$ for each $x \in P$, and a morphism $M(x \leq y)$ in ${Mor}_{\mathcal{D}}(M(x),M(y))$ for each $x, y \in P$ with $x \leq y$ satisfying
$$M(x \leq z)=M(y \leq z) \circ M(x \leq y) \text{ whenever }x, y , z \in P \text{ with }x \leq y \leq z.$$  

Perhaps surprisingly, the study of such objects is useful in topological data analysis.  \emph{Persistent homology} uses  generalized persistence modules to attempt to discern the topological properties of a finite data set.  We briefly summarize the algorithm applied to a point cloud of data in the persistent homology setting.  This will lead to \emph{one-dimensional (generalized) persistence modules}, where the poset $P = (0,\infty)$ or $\mathbb{R}$. 

Suppose, for example, we wish to decide whether a data set $D \subseteq {\mathbb{R}}^2$ should be more correctly interpreted as an annulus or a disk.  In order to decide between the two candidates, one calculates the homology of a filtration of simplicial complexes associated to the data set.  This uses the Vietoris-Rips complex ${(C_{\epsilon})}_{\epsilon > 0}$.  Specifically, for each $\epsilon > 0$, we let $C_{\epsilon}$ be the abstract simplicial complex whose $k$-simplices are determined by data points $x_1, x_2, ... x_{k+1}\in D$ where $d(x_i, x_j) \leq \epsilon$ for all $1 \leq i, j \leq k+1$.  Clearly, for ${\sigma} \leq {\tau}$ in $(0, \infty)$,  there is an inclusion of simplicial complexes $C_{\sigma} \hookrightarrow C_{\tau}$, thus we obtain a filtration of simplicial complexes indexed by $(0,\infty)$.  Therefore, the assignment $F:\epsilon \to C_{\epsilon}$ is a representation of the poset $(0, \infty)$ taking values in $Simp$, the category of abstract simplicial complexes.  That is to say, $F$ is a generalized persistence module for $P =(0,\infty)$ and $\mathcal{D} = Simp$.  Since we wish to distinguish between an annulus and a disk, we apply the first homology functor $H_1(-,K)$ to $F$ (where $K$ is some field), to obtain the representation of $P$ with values in $K$-mod, $\epsilon \to H_1(C_{\epsilon},K)$.

Thus, the assigment $H_1(-,K) \circ F \textrm{ given by } \epsilon \to H_1(C_{\epsilon},K)$ is a one-dimensional persistence module.  As $\epsilon$ increases generators for $H_1$ are born and die, as cycles appear and become boundaries.  In persistent homology, one takes the viewpoint that true topological features of the data set can be distinguished from noise by looking for generators of homology which "persist" for a long period of time.  Informally, one "keeps" an indecomposable summands of the module $H_1(-,K) \circ F$ when it corresponds to a wide interval.  Conversely, cycles which disappear quickly after their appearance (narrow ones) are interpreted as noise and disregarded.  

This technique has been widely successful in topological data analysis (see, for example, \cite{carlsson_top}, \cite{stability}, \cite{chazal}, \cite{ghrist}, \cite{carlsson_local}, \cite{applied_1}, \cite{applied_2}, and \cite{applied_3}).  Typically, the category of persistence modules with values in $K$-mod is given a metric-like structure.  So-called \emph{soft stability theorems}, which involve the continuity from the data to the persistence module, have been proven.  Philosophically, these results have established the utility of this method from the perspective of data analysis  (See, for example, \cite{stability}). \emph{Hard stability theorems}, on the other hand, involve comparisons of different metrics on the category of persistence modules.

\subsection{Algebraic Stability}
One special type of hard stability theorem is an \emph{algebraic stability theorem}.  In such a theorem, one endows a collection of generalized persistence modules with two metric structures, and  an automorphism $J$ is shown to be a contraction or an isometry.   Of particular interest is the case when $J$ is the identity function and the metrics are an \emph{interleaving metric} and a \emph{bottleneck metric}.  Algebraic stability theorems of this type are common (see \cite{lesnick}, \cite{zigzag}, \cite{induced_matchings}, and \cite{carlsson}).  While in the literature, the word "interleaving" is frequently used to describe slightly different metrics, we believe that the interleaving metric suggested by Bubenik, de Silva and Scott (see \cite{bubenik}) has the advantage of being both most general, and categorical in nature.  This interleaving metric makes sense on any poset $P$, and reduces to the interleaving metric of \cite{induced_matchings} when $P = (0,\infty)$.  Alternatively, a bottleneck metric is nothing more than a way of extending a metric defined on a set ${\Sigma}$, to the collection of all ${\mathbb{Z}}_{\geq 0}$-valued functions with finite support on ${\Sigma}$.  In this context, this is applied to the decomposition of a generalized persistence module into its indecomposable summands with their corresponding multiplicities.

\subsection{Connections to Finite-dimensional Algebras}
This paper concerns algebraic stability studied using techniques from the representation theory of algebras.  Such representations appear because one-dimensional persistence modules arising from data always admit the structure of a representation of a finite totally ordered set.  This fact comes from the simple observation that the generalized persistence module given by $F:\epsilon \to C_{\epsilon}$ is necessarily a step function.  More precisely, let 
$$P_n = \{\epsilon_1 < \epsilon_2 <... < \epsilon_n\}  = \{ \epsilon \in (0,\infty): C_{\epsilon} \neq \lim \limits_{\tau \to {{\epsilon}^-}} C_{\tau} \}.$$

By definition, $F$ is constant on all intervals of the form $[\epsilon_i,\epsilon_{i+1})$.  Thus, clearly, both $F$ and $H_1(-,K) \circ F$ admit the structure of a generalized persistence module for $P= P_n$.  
When we restrict the structure of a one-dimensional persistence module to $P_n$, we say informally that we are \emph{discretizing}.  In this sense, generalized persistence modules for finite totally ordered sets are the discrete analogue of one-dimensional persistence modules.

While the above description of persistent homology will always discretize to a generalized persistence module for a finite totally ordered set, representations of many other infinite families of finite posets also have a physical interpretation in the literature (see \cite{zigzag}, \cite{carlsson}, \cite{ladder}).  For example, multi-dimensional persistence modules (see \cite{carlsson}) will discretize in an analagous fashion to representations of a different family of finite posets.  This is relevant because there is a categorical equivalence between the generalized persistence modules for a finite poset $P$ with values in $K$-mod, and the module category of the finite-dimensional $K$-algebra $A(P)$, the poset (or incidence) algebra of $P$.  In \cite{meehan_meyer_1}, we suggested a template for a representation-theoretic algebraic stability theorem and proved the following theorem. 

\vspace{.1 in}
\noindent
\begin{theoremnonum}(Theorem 1 \cite{meehan_meyer_1})
\label{main}
Let $P$ be an $n$-Vee and let $\mathcal{C}$ be the full subcategory of $A(P)$-modules consisting of direct sums of convex modules.  Let $(a,b) \in {\mathbb{N}} \times {\mathbb{N}}$ be a weight and let $D$ denote interleaving distance (corresponding to the weight $(a,b)$) restricted to $\mathcal{C}$.  \\Let  $W(M) = \textrm{min}\{ \epsilon: \textrm{Hom}(M, M\Gamma \Lambda) = 0, \Gamma, \Lambda \in  \mathcal{T}(\mathcal{P}), h(\Gamma), h(\Lambda) \leq \epsilon \}$, and let $D_B$ be the bottleneck distance corresponding to the interleaving distance and $W$ on $\mathcal{C}$.  Then, the identity is an isometry from $$(\mathcal{C},D) \xrightarrow{Id} (\mathcal{C},D_B).$$
\end{theoremnonum}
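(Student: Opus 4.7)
The plan is to establish the two inequalities $D_B(M,N) \leq D(M,N)$ and $D(M,N) \leq D_B(M,N)$ separately, following the standard template for algebraic stability theorems.

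For the first inequality, I would prove a matching theorem: given an $\epsilon$-interleaving $(\phi,\psi)$ between $M,N \in \mathcal{C}$, produce a matching of the indecomposable convex summands of $M$ and $N$ with cost (under the weight function) at most $\epsilon$. Writing $M = \bigoplus M_i$ and $N = \bigoplus N_j$ as sums of convex indecomposables, the construction would track how the block components of $\phi$ and $\psi$ behave. Matched pairs $(M_i, N_j)$ would correspond to blocks where $\phi$ and $\psi$ have nonzero components inducing mutually inverse-like maps at the level of summands, while unmatched summands $M_i$ must satisfy $\Hom(M_i, M_i\Gamma\Lambda) = 0$ for suitable $\Gamma,\Lambda$ of height at most $\epsilon$, because the composition $\psi \circ \phi$ restricted to such a summand must factor through zero; this forces $W(M_i) \leq \epsilon$. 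The combinatorics of convex modules over an $n$-Vee is tractable enough to carry out this bookkeeping explicitly.

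For the converse, given a matching $\mu$ of indecomposables with cost at most $\epsilon$, I would build an $\epsilon$-interleaving piecewise. Each matched pair $(M_i, N_j)$ has cost $\leq \epsilon$ by definition of $D_B$, and hence admits an $\epsilon$-interleaving of summands; taking the direct sum of these yields the matched portion of the desired interleaving. For an unmatched summand $M_i$ of $M$, the hypothesis $W(M_i) \leq \epsilon$ provides $\Hom(M_i, M_i\Gamma\Lambda) = 0$ for $\Gamma,\Lambda \in \mathcal{T}(\mathcal{P})$ with $h(\Gamma), h(\Lambda) \leq \epsilon$. This means that the zero morphisms from (and to) $M_i$ trivially satisfy the interleaving axioms on this block. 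Assembling the matched and unmatched blocks yields a genuine $\epsilon$-interleaving between $M$ and $N$.

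The main obstacle is the matching theorem (first inequality). The challenge is the asymmetry introduced by the weight $(a,b)$: interleaving morphisms in the two directions carry different costs, and the matching produced must respect this asymmetry so that its cost is measured correctly by $W$ and the weighted interleaving metric. For matched summands one must verify that the supports of the paired convex indecomposables differ by at most $\epsilon$ in the weighted sense, and for unmatched summands one must correctly identify when they are annihilated by a shift of total weight $\epsilon$. The $n$-Vee geometry is essential here: it is rigid enough that a matching of the correct height exists, which (as foreshadowed in the abstract) will \emph{fail} for more general posets and arbitrary weights without enlarging the category.
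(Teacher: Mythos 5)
The statement you are proving is quoted as prior work (Theorem 1 of the reference) and is not reproved in the present paper, but Subsection 3.5 on induced matchings and the proof of the shift isometry theorem here make clear that the method behind it is the Bauer--Lesnick induced-matching construction. Your high-level skeleton is right: prove $D\le D_B$ by assembling an interleaving from a matching (taking direct sums over matched pairs and zero morphisms on unmatched summands, using $W\le\epsilon$ to make the triangles commute), and prove $D_B\le D$ via a matching theorem. That converse direction is fine. The matching theorem, however, is where your proposal has a real gap.

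The problematic sentence is ``matched pairs $(M_i,N_j)$ would correspond to blocks where $\phi$ and $\psi$ have nonzero components inducing mutually inverse-like maps.'' First, there are no inverse-like maps here: $\Hom$ between two convex modules is at most one-dimensional (Lemma 34 of the cited reference), so every nonzero block is just a scalar multiple of the characteristic generator $\Phi_{M_i,N_j\Lambda}$. Second, the existence of nonzero $\phi^i_j$ and $\psi^j_i$ does not imply that $M_i$ and $N_j$ are $\epsilon$-interleaved. The interleaving triangle only forces $\sum_j(\psi\Lambda)^j_i\circ\phi^i_j$ to equal the structure map on $M_i$; an individual composite $\psi^j_i\Lambda\circ\phi^i_j$ can vanish even when both factors are nonzero, since composability of generators is a support condition. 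Third, and most seriously, your criterion does not yield a well-defined bijection: a single $N_j$ may have several $M_i$ mapping to it nontrivially and vice versa, and nothing in the block structure of $\phi$ alone selects a pairing. This is exactly what the Bauer--Lesnick construction is designed to handle: factor $\phi$ as surjection-then-injection through $\mathrm{im}(\phi)$, enumerate the barcodes of domain, image and codomain by endpoints, and take the composite of the two canonical order-preserving injections, then transport along the bijection $B(M\Lambda)\to B(M)$. Widths of unmatched summands are controlled not by a ``factors through zero'' slogan but by the kernel/cokernel width bounds (parts (iii)--(iv) of the proposition in Subsection 3.5), and the verification that matched pairs are genuinely $(\Lambda,\Lambda)$-interleaved is a separate support-combinatorics argument (part (3) of the shift isometry proof, or its analogue in the cited reference), not a corollary of how the blocks of $\phi,\psi$ sit. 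Without this machinery your first inequality does not go through.
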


This result established an algebraic version of the isometry theorem for a large class of posets.  Our choice of interleaving metric was the metric of Bubenik, de Silva and Scott with a slight modification.  The bottleneck metric incorporates algebraic information, but corresponds to the bottleneck metric used by Bauer and Lesnick when applied to one-dimensional persistence modules.  In the above, $1$-Vees are exactly finite totally ordered sets, and in the case $P$ is a $1$-Vee, the category $\mathcal{C}$ is the full module category.  Once some parameters are chosen, the collection of interleavings between two modules has the structure of an affine variety.  The interleaving distance corresponded to the smallest parameter for which the corresponding variety was nonempty.

While this theorem covered a much more general class of posets than finite totally ordered sets (many of which had wild representation type), we restricted to metrics coming from a so-called "democratic" choice of weights on the Hasse quiver of the poset.  We now wish to extend this to metrics coming from arbitrary weights in the case of finite totally ordered sets.  

In addition, from the perspective of data analysis, in \cite{meehan_meyer_1} we pointed out two potential issues which arise when one passes to the jump discontinuities of the Vietoris-Rips complex.  This paper will address these issues.  The first issue is that a finite data set $D$ produces not only a generalized persistence module, but also to its accompanying algebra.  Because of this, a priori two persistence modules may not be able to be compared simply because they are not modules for the same algebra.  The second issue is that information about the width of the interval $[\epsilon_i, \epsilon_{i+1})$ should be considered, but appears lost when one discretizes.  Here we show that both of these concerns can be dealt with successfully after we establish an algebraic stability theorem for $P_n$ with an arbitrary choice of weights.

\subsection{Main Results}

In this paper, we restrict our attention to the class of posets which arise when one-dimensional persistence modules are discretized, but extend our analysis to interleaving metrics coming from an arbitrary choice of weights.  In this situation, we show that an interleaving need not produce a matching of the same height.  Indeed, when one makes the most natural choice of weights corresponding to the geometry of the data set, this is almost certain to be the case.  Even in this situation, however, we obtain a "shifted" isometry theorem by extending to an algebra with a larger module category.  This result is stated in the first main theorem.

\begin{theorem}[Shift Isometry 
 Theorem]
\label{shift_isometry_theorem}
Let $X_1\subseteq \mathbb{R}$ be finite and $Sh(X_1)$ be its shift refinement. Let $P_{X_1}$ and $P_{Sh({X_1})}$ be the corresponding totally ordered sets.  Let $C$ be the full subcategory of $A(P_{Sh(X_1)})$-mod given by $\mathcal{C}=im(j(X_1,Sh(X_1)))$ and let $D^{Sh(X_1)}$, $D_B^{Sh(X_1)}$ be the interleaving metric and bottleneck metric respectively. Let $j =  j(X_1,Sh(X_1))$, and $Id$ denote the identity.  Then, the horizontal arrow is an isometry and the diagonal arrows are contractions.

\begin{center}
\includegraphics{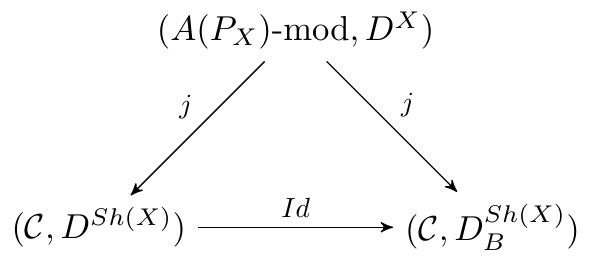}
		

\end{center}
\end{theorem}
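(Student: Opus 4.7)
The approach is to adapt the template of Theorem 1 of \cite{meehan_meyer_1} to the refined poset $P_{Sh(X_1)}$. The conceptual point of the shift refinement is that, for an arbitrary weight on the Hasse quiver of $P_{X_1}$, a shift $\Lambda$ may not move endpoints of a convex module to endpoints that still live in $P_{X_1}$; by enlarging the totally ordered set to $Sh(X_1)$ we ensure that the shifted endpoints of every indecomposable in $\mathrm{im}(j)$ remain vertices of the underlying poset. This is precisely the additional room needed so that an interleaving of height $\epsilon$ can be re-encoded as a matching of height $\epsilon$.

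The plan is to prove the horizontal isometry in two inequalities. For $D^{Sh(X_1)} \geq D_B^{Sh(X_1)}$, start with modules $M,N\in\mathcal{C}$ and an $\epsilon$-interleaving $(\Gamma,\Lambda)$ in $\mathcal{C}$. Decompose $M$ and $N$ into convex indecomposables, which lie in $\mathcal{C}$ by construction. Because every indecomposable summand is convex and lives in the shift refinement, the shifted modules $M\Gamma$ and $N\Lambda$ again decompose into convex indecomposables over $P_{Sh(X_1)}$. I would then adapt the matching-construction of \cite{meehan_meyer_1} vertex by vertex: the interleaving morphisms, written in the basis of convex summands, produce a block-matrix whose nonzero blocks determine a partial matching, and the remaining unmatched summands are precisely those killed by the weight function $W$. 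The upshot is that the hom-vanishing condition defining $W$ is exactly the obstruction to extending the partial matching, yielding a matching of height $\epsilon$ on the nose.

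For the reverse inequality $D^{Sh(X_1)} \leq D_B^{Sh(X_1)}$, I would start with an $\epsilon$-matching $\sigma$ between the indecomposable summands of $M$ and $N$, and construct an $\epsilon$-interleaving block-diagonally: matched pairs contribute diagonal blocks via the canonical comparison maps between a convex module and its shifts, and unmatched summands contribute zero blocks, which is allowed precisely because the hom-vanishing condition in $W$ guarantees these are $\epsilon$-interleaved with $0$. The commutativity axioms of an interleaving reduce to the coherence of the shift functor and the standard identities among canonical maps between convex modules and their shifts, both of which take place entirely inside $\mathcal{C} = \mathrm{im}(j)$.

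For the two diagonal contractions, the key observation is that $j(X_1,Sh(X_1))$ is induced by a surjection of posets $P_{Sh(X_1)} \twoheadrightarrow P_{X_1}$, so pulling back along $j$ preserves the comparison maps that define interleavings and matchings; this naturality gives the contraction immediately for both $D$ and $D_B$. The principal obstacle in the whole argument is the first inequality: the matching-of-same-height statement, which fails in $A(P_{X_1})$-mod with arbitrary weights, is exactly what forces us into $\mathcal{C}$ in the first place, and verifying it requires showing that every shifted indecomposable that appears in the block decomposition of an interleaving morphism still lies in $\mathcal{C}$ and that the weight function $W$ correctly captures all obstructions to matching — i.e.\ the interleaving morphisms can be simultaneously upper-triangularized with respect to a refinement of the convex decomposition, which is where the shift refinement does the essential work.
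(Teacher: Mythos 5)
Your high-level architecture is right (diagonal arrows as contractions via naturality of $j$; horizontal isometry reduced to a matching theorem; $D_B\geq D$ because a matching gives a diagonal interleaving), but the hard direction $D\geq D_B$ relies on two mechanisms that you misidentify, and these are genuine gaps rather than just imprecision.

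First, the matching is not obtained by ``reading off nonzero blocks'' of the interleaving morphism or by a simultaneous upper-triangularization. In the Krull--Schmidt decomposition the morphism $\phi:I\to M\Lambda$ is indeed a matrix, but its nonzero entries do not define a bijection of summands (one $I_s$ may hit several $M_t\Lambda$'s), so there is no partial matching to ``extend.'' The construction the paper uses is the Bauer--Lesnick induced matching: factor $\phi$ through its image as $I\xrightarrow{q_\phi}\operatorname{im}\phi\xrightarrow{i_\phi}M\Lambda$, then build canonical injections of barcodes using the counting inequalities for surjections (grouped by lower endpoint) and injections (grouped by upper endpoint), and finally compose with the purely combinatorial map $B(M\Lambda)\to B(M)$. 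This is a genuinely different device, and an extra subtlety appears here under arbitrary weights: two non-isomorphic summands of $I$ can have isomorphic images under $(-)^{-\Lambda^2}$, so a compatible enumeration of multisets must be chosen; this has no analogue in the ``block'' picture.

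Second, you misplace where the shift refinement enters. You claim it ensures ``every shifted indecomposable that appears... still lies in $\mathcal{C}$''; that is not the mechanism, and in fact $I_s\Lambda$ for $I_s\in\mathcal{C}=\operatorname{im}(j)$ will generally not lie in $\mathcal{C}$. The shift refinement is used at a single pinch point in step (3): to show the matched pair $I_s\sim[u,U]$, $M_t\sim[z,Z]$ satisfies $W\leq Z$ (where $I_s\Lambda\sim[w,W]$), one must prove $W=\Lambda U_0$ for $U_0$ maximal with $\Lambda^2 U_0\leq U$, and this requires $W\in\operatorname{Im}\Lambda$. Because $I_s\in\operatorname{im}(j)$, the successor $U^{+1}$ lies in the coarse set $X$, so $U^{+1}-\epsilon$ lies in the first refinement $Y$, and Lemma \ref{refinement} (the defining property of $Sh(X)$, that $q^{-1}-\epsilon\in Sh(X)$ for $q\in Y$) then forces $W-\epsilon\in Sh(X)$, hence $W\in\operatorname{Im}\Lambda$. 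Without this precise combinatorial fact about $Sh(X)$ — which is exactly what the iterated construction $Z_1\subseteq Z_2\subseteq\cdots$ was built to guarantee — the endpoint inequality fails and the induced matching need not have the correct height, as Example \ref{counterexample_1} shows happens already over $P_X$. Your proposal acknowledges that the shift refinement ``does the essential work'' but does not say what that work is, and the statement it offers in its place is false.
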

Theorem \ref{shift_isometry_theorem} says that in particular, by using the functor $j$ to enlarge the category, one obtains the shifted isometry 
$$(\mathcal{C},D^{Sh(X_1)})\xrightarrow{Id} (\mathcal{C},D_B^{Sh(X_1)}).$$ Moreover, $j$ is fully faithful and is itself a contraction.  When the choice of weights is general, this is a natural extension of the classical isometry theorem.  We then use the shift isometry theorem to prove our second main result.

\begin{theorem}
\label{main theorem}
Let $I, M$ be persistence modules (for $\mathbb{R}$ ) whose endpoints lie in $L$.  Then,
$$\displaystyle\lim\limits_{X \in \Delta(D)}\big(D^X(\delta^XI,\delta^XM)\big) =  D(I,M). $$
\end{theorem}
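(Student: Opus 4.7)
The plan is to prove the equality by establishing the two inequalities
$$\limsup_{X \in \Delta(D)} D^X(\delta^X I, \delta^X M) \leq D(I,M) \leq \liminf_{X \in \Delta(D)} D^X(\delta^X I, \delta^X M),$$
where the net is indexed by the directed system of finite subsets of $\mathbb{R}$ under refinement, with connecting morphisms given by the functors $j(X,Y)$ of the Shift Isometry Theorem.

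For the upper bound, I would take any $\epsilon$-interleaving of $I$ and $M$ as representations of $\mathbb{R}$ and push it through the discretization functor $\delta^X$. Once $X$ is fine enough (so that the required shifts are compatible with the grid of $X$, which is exactly the role of passing to $Sh(X)$), the image of the continuous interleaving is a discrete interleaving of $\delta^X I$ and $\delta^X M$ whose weight is bounded by $\epsilon$. Since this holds for every continuous $\epsilon$-interleaving, taking the infimum yields $D^X(\delta^X I, \delta^X M) \leq D(I,M)$ for all sufficiently fine $X$, and hence the $\limsup$ bound.

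For the lower bound, I would use the hypothesis that the endpoints of $I$ and $M$ lie in $L$. For any $X$ containing enough of $L$, the module $\delta^X I$ faithfully records the barcode of $I$, because no births or deaths occur strictly between consecutive grid points of $X$, and similarly for $M$. Given a sequence $\epsilon_n \downarrow \liminf_X D^X(\delta^X I, \delta^X M)$ and, for each $n$, an $X_n$ together with an $\epsilon_n$-interleaving of $\delta^{X_n} I$ and $\delta^{X_n} M$, the Shift Isometry Theorem converts this interleaving into a bottleneck matching of shifted indecomposable summands inside the enlarged category $\mathcal{C} = \operatorname{im}(j(X_n, Sh(X_n)))$ whose height is at most $\epsilon_n$. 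Because the indecomposables in the image of $j$ correspond bijectively to intervals with endpoints in $X_n \subseteq \mathbb{R}$, this matching can be read back as a bottleneck matching of the continuous barcodes of $I$ and $M$. Passing $n \to \infty$ along the cofinal system and using the fact that the barcodes of $I$ and $M$ are fixed finite collections of intervals whose endpoints are eventually captured by $X_n$, a standard compactness/diagonal argument extracts a limiting matching of height $\liminf_X D^X$, whence $D(I,M) \leq \liminf_X D^X(\delta^X I, \delta^X M)$.

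The main obstacle is the lower bound, and specifically the passage from a coherent family of discrete matchings (one per $X$) to a single continuous matching whose height is controlled. The Shift Isometry Theorem is essential here because without it the discrete interleavings need not produce matchings of the correct height, which is exactly the pathology identified in the earlier sections of the paper. The contractive functors $j(X,Y)$ ensure the family of discrete distances is monotone in the refinement order, so the net converges (rather than merely having equal $\liminf$ and $\limsup$), and the equality with $D(I,M)$ then follows from the two inequalities above.
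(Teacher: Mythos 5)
Your overall strategy matches the paper's: bound $\limsup D^X$ by $D(I,M)$ and $D(I,M)$ by $\liminf D^X$, with the lower bound using the Shift Isometry Theorem to convert a discrete interleaving into a discrete matching and then reading that back as a matching of the continuous barcodes. Your observation that the contractive functors $j(X,Y)$ make the net $X\mapsto D^X(\delta^XI,\delta^XM)$ monotone non-increasing (and hence genuinely convergent) is a nice streamlining that the paper does not state explicitly.

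Your upper bound, however, takes a genuinely different route from the paper, and it contains a misattribution you should be aware of. The paper first invokes the Bauer--Lesnick isometry for $\mathbb{R}$ to replace the continuous $\epsilon$-interleaving with an $\epsilon$-matching of barcodes, then checks (via Lemma~\ref{Lemma W} and Proposition~\ref{distance formula}) that the same pairing of intervals yields a discrete matching of height at most $\epsilon+\tfrac{1}{2}\gamma$. You instead propose to push the continuous interleaving maps directly through $\delta^X$. That can be made to work: take $\Lambda(x)=\min\{x'\in X: x'\geq x+\epsilon\}$, so $h(\Lambda)\leq \epsilon + \operatorname{mesh}(X)$, and the maps factor through the continuous shift. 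But note two things. First, the resulting discrete interleaving has weight $\epsilon + \operatorname{mesh}(X)$, not $\epsilon$; this is harmless for the $\limsup$ but your phrasing suggests an exact bound. Second, and more importantly, $Sh(X)$ plays no role here, and the parenthetical claim that passing to $Sh(X)$ makes ``the required shifts compatible with the grid'' is incorrect: $Sh(X)$ is built by adjoining backward translates by distances in $N_1$, precisely so that points of the form $W-\epsilon$ (for $\epsilon$ equal to a gap size) lie in the grid --- this is what makes parts (ii.) and (iii.) of the Shift Isometry Theorem go through. It does \emph{not} guarantee that $x+\epsilon$ lands on the grid for an arbitrary interleaving parameter $\epsilon$. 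For the upper bound you only need $X$ fine enough, no shift refinement.

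On the lower bound, you are essentially doing what the paper does, but the ``compactness/diagonal argument'' is unnecessary. The paper argues by direct contradiction for a fixed $X'$: a too-small discrete interleaving gives a matching in $Sh(X')$, which (because all endpoints lie in $L\subseteq X'\subseteq Sh(X')$ and via the convergence estimates of Lemma~\ref{Lemma W} and Proposition~\ref{distance formula}) already produces a continuous matching between $B(I)$ and $B(M)$ of height strictly below $D(I,M)$, a contradiction. No passage to a subnet or limiting matching is needed. The step your sketch glosses over is precisely the quantitative comparison between discrete widths/distances and their continuous counterparts, which is where Lemmas~\ref{Lemma W},~\ref{lemma D} and Proposition~\ref{distance formula} do real work and where the hypothesis that endpoints lie in $L$ is used.
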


Theorem \ref{main theorem} says that, again, with the most natural choice of weights, our modification of the interleaving metric of Bubenik, de Silva and Scott approximates the classical interleaving metric in the sense that one can take a limit over a directed set to obtain the classical distance. This limit is uniform in that the choice of refinement is independent of the modules to be compared. In addition, by the squeeze theorem the discrete bottleneck metric converges to the classical bottleneck metric (and therefore the interleaving metric) as well.

This paper will be organized as follows: in Section \ref{section preliminaries} we give a brief summary of background information, then in Section \ref{section rest} we discuss restriction and inflation.  In the next two sections we prove our two main results.  In Section \ref{section shift} we prove Theorem \ref{shift_isometry_theorem}, and in Section \ref{section interleaving} we prove Theorem \ref{main theorem}.  Then, in Section \ref{section regu} we give an analysis of posets in which the original matching theorem fails.

\section{Acknowledgements}
The authors wish to acknowledge Calin Chindris both for introducing us to this field of study, and for all of his guidance.  K. Meehan was supported by the NSA under grant H98230-15-1-0022.

\section{Preliminaries}
\label{section preliminaries}

\subsection{Generalized Persistence Modules}
Recall that if $P$ is a poset and $\mathcal{D}$ is a category, a generalized persistence module $M$ with values in $\mathcal{D}$ assigns an object $M(x)$ of $\mathcal{D}$ for each $x \in P$, and a morphism $M(x \leq y)$ in ${Mor}_{\mathcal{D}}(M(x),M(y))$ for each $x, y \in P$ with $x \leq y$ that satisfies
 $$M(x \leq z)=M(y \leq z) \circ M(x \leq y) \textrm{ whenever }x, y , z \in P \textrm{ and }x \leq y \leq z.$$ 

Let ${\mathcal{D}}^P$ denote the collection of generalized persistence modules for $P$ with values in $\mathcal{D}$.  If $F, G \in {\mathcal{D}}^P$, a morphism from $F$ to $G$ is a collection of morphisms $\{\phi(x)\}$, with $\phi(x) \in {Mor}_{\mathcal{D}}(F(x),G(x))$ for all $x \in P$, such that for all $x \leq y$ we have a commutative diagram below for each $x \leq y$ in $P$.

\begin{center}

\begin{tikzpicture}[commutative diagrams/every diagram]
	\matrix[matrix of math nodes, name=m, commutative diagrams/every cell,row sep=.7cm,column sep=1cm] {
	 \pgftransformscale{0.2}
		F(x) & F(y)\\
		G(x) & G(y)\\ };
		
		\path[commutative diagrams/.cd, every arrow, every label]
			(m-1-1) edge node {$F(x \leq y)$} (m-1-2)
			(m-1-1) edge node[swap] {$\phi(x)$} (m-2-1)
			(m-2-1) edge node {$G(x \leq y)$} (m-2-2)
			(m-1-2) edge node {$\phi(y)$} (m-2-2);
		
\end{tikzpicture}

\end{center}
With such morphisms, ${\mathcal{D}}^P$  is a category.  Equivalently, one could regard the poset $P$ as a thin category.  Then, a generalized persistence module will correspond to a covariant functor from $P$ to $\mathcal{D}$, and morphisms in ${\mathcal{D}}^P$ will be natural transformations.  In this paper $P$ will always be a finite totally ordered set, $(0,\infty), \textrm{ or }\mathbb{R}$.  $\mathcal{D}$ will be either $Simp$ or $K$-mod.

\subsection{Representation Theory of Algebras}
\label{algebra}

In this subsection we give a brief summary of $K$-algebras and their representations (modules).  For a more expansive introduction, see \cite{auslander}, \cite{benson1}, \cite{benson2}.  For more on the connection between finite-dimensional algebras and generalized persistence modules see \cite{oudot} or \cite{meehan_meyer_1}.  Throughout, let $K$ denote a field.  If $R$ is a $K$-algebra, by an $R$-module, we mean a finite-dimensional, unital, left $R$-module.  The category $R$-mod consists of $R$-modules together with $R$-module homomorphisms.

Recall that an $R$-module $M$ is \emph{indecomposable} if it is not isomorphic to a direct sum of two of its proper submodules.  The category $R$-mod is an abelian Krull-Schmidt category.  That is, every module can be written as a direct sum of indecomposable modules in a unique way up to order and isomorphism.  Moreover, the decomposition of modules is compatible with homomorphisms in the following sense.

\begin{prop}
Let $R$ be a $K$-algebra, and let $M, N$ be $R$-modules.  Say, $M \cong \oplus M_i$ and $N \cong \oplus N_j$.  Then, as vector spaces,
$$\textrm{Hom}(M,N) \cong \bigoplus\limits_{i,j} \textrm{Hom}(M_i,N_j). $$
\end{prop}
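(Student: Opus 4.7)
The plan is to observe that this statement has nothing to do with Krull--Schmidt or indecomposability of the summands $M_i, N_j$; it is the purely additive fact that $\operatorname{Hom}$ turns finite coproducts in the first variable into products and preserves products in the second variable, together with the observation that in $K\text{-mod}$ finite products coincide with finite direct sums. I therefore intend to prove the more general identity
$$\operatorname{Hom}\Bigl(\bigoplus_{i=1}^{s} A_i,\; \bigoplus_{j=1}^{t} B_j\Bigr) \;\cong\; \bigoplus_{i,j} \operatorname{Hom}(A_i,B_j)$$
for arbitrary $R$-modules $A_i, B_j$, and then specialize to the Krull--Schmidt decompositions $M \cong \bigoplus M_i$ and $N \cong \bigoplus N_j$.

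The concrete step is to exhibit the isomorphism via the matrix construction. Let $\iota_i \colon A_i \hookrightarrow \bigoplus_k A_k$ be the canonical inclusions and $\pi_j \colon \bigoplus_\ell B_\ell \twoheadrightarrow B_j$ the canonical projections, and let $\iota'_j$, $\pi'_i$ denote the analogous maps for the $B$-sum and the $A$-sum respectively. I would define the $K$-linear map
$$\Phi(f) \;=\; \bigl(\pi_j \circ f \circ \iota_i\bigr)_{i,j},$$
sending a homomorphism to its matrix of components, with proposed two-sided inverse
$$\Psi\bigl((f_{ij})\bigr) \;=\; \sum_{i,j} \iota'_j \circ f_{ij} \circ \pi'_i.$$
Verifying $\Phi \circ \Psi = \operatorname{id}$ and $\Psi \circ \Phi = \operatorname{id}$ then reduces to the standard biproduct identities $\sum_i \iota_i \pi'_i = \operatorname{id}_{\bigoplus A_i}$, $\pi'_i \iota_{i'} = \delta_{ii'}\, \operatorname{id}_{A_i}$, and their analogues on the $B$-side, which hold for finite direct sums in any additive category.

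Since the statement is a formal consequence of the biproduct structure of $R\text{-mod}$, no serious obstacle arises. The only point where the finiteness assumption is used is the identification of the product $\prod_{i,j} \operatorname{Hom}(A_i, B_j)$ appearing in the codomain of $\Phi$ with the direct sum $\bigoplus_{i,j} \operatorname{Hom}(A_i, B_j)$; this is automatic here because our $R$-modules are finite-dimensional and so their Krull--Schmidt decompositions have finitely many summands.
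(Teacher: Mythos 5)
Your proof is correct and complete: the matrix-of-components bijection $\Phi$, $\Psi$ with the standard biproduct identities is exactly the argument, and you are right that Krull--Schmidt and indecomposability are irrelevant here. The paper itself does not prove this proposition --- it records it as a standard fact and immediately glosses it with the same observation you are formalizing (``any module homomorphism $f \colon M \to N$ can be factored into a matrix of module homomorphisms $f^i_j \colon M_i \to N_j$''), so your write-up simply supplies the routine verification the authors omit.
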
 

This says that any module homomorphism $f:M \to N$ can be factored into a matrix of module homorphisms $f^i_j : M_i \to N_j$.

\subsubsection{Quivers and their Representations}
We now define quivers and their representations.

\begin{definition}
A quiver $Q = (Q_0,Q_1, t, h)$ is an ordered tuple, where $Q_0, Q_1$ are disjoint sets, and $t, h : Q_1 \to Q_0$.
\end{definition}

We call elements of $Q_0$ vertices, and elements of $Q_1$ arrows.  The functions $t$ and $h$ denote the tail (start) and head (end) of the arrows.  Thus, clearly $Q$ is exactly a directed set.  We will always suppose the sets $Q_0, Q_1$ are finite.  

\begin{ex} 
\label{quivers}
Below are two quivers.

\begin{figure}[H]
\centering
\begin{subfigure}[b]{0.5\textwidth}
\label{quiver1}
\begin{tikzpicture}[normal line/.style={->},shorten >=1pt,font={\it\small},node distance=2cm,main node/.style={circle,scale=.5,fill=blue!10,draw,font=\sffamily\small\bfseries}]
 
  \node (1) {$\bullet_1$};
  \node (2) [right of=1] {$\bullet_2$};
  \node (3) [right of=2] {$\bullet_3$};
  \node (4) [above right of=3] {$\bullet_4$};
  \node (5) [below right of=3] {$\bullet_5$};

  \path[normal line]
    (1) edge node [above] {a} (2) 
    (2) edge node [above] {b} (3)
    (3) edge node [above] {c} (4)
    (3) edge node [above] {d} (5);

\end{tikzpicture}
\caption*{A}

\end{subfigure}
~
\begin{subfigure}[b]{0.5\textwidth}
\label{quiver2}
\centering
\begin{tikzpicture}[normal line/.style={->},shorten >=1pt,font={\it\small},node distance=3cm,main node/.style={circle,scale=.5,fill=blue!10,draw,font=\sffamily\small\bfseries}]
 
  \node (1) {$\bullet_1$};
  \node (2) [above right of=1] {$\bullet_2$};
  \node (3) [right of=1] {$\bullet_3$};
  \node (4) [right of=3] {$\bullet_4$};

  \path[normal line]
  	(1) edge node [right] {a} (2) 
	(2) edge [bend right=30] node [right] {c} (3)
	(3) edge node [below right] {b} (1)
	(3) edge [bend right=45] node [right] {d} (2)
	(3) edge [bend right=90] node [below] {e} (4)
	(3) edge [bend right] node [above] {f} (4)
	(4) edge [loop] node [above] {g} (4);

\end{tikzpicture}
\caption*{B}
\label{quiver2}

\end{subfigure}
\end{figure}
\noindent
Quiver A corresponds to $Q_0 = \{1,2,3,4,5\}, Q_1 = \{a,b,c,d\}$, for an appropriate choice of the functions $h, t$.  Similarly, quiver B corresponds to the sets $Q_0 = \{1, 2, 3, 4\}$, and $Q_1 = \{a, b, c, d, e, f, g\}$. 
\end{ex}

A \emph{path} is a sequence of arrows $p = a_1 ... a_n$ where $t(a_i) = h(a_{i+1})$.  The length of the path is the number of terms in the sequence $p$.  In addition, at each vertex $i$ there is a "lazy" path $e_i$ of length $0$ at the vertex $i$.  We extend the functions $h, t$ to paths, by defining $t(p) = t(a_n)$ and $h(p) = h(a_1)$.   In addition, $t(e_i) = h(e_i) = i$.  An \emph{oriented cycle} is a path $p$ of length greater than or equal to one with $t(p) = h(p)$.

\begin{definition}
A representation $V$ of a quiver $Q$ is a family $V = (\{V(i)\}_{i \in Q_0}, \{V(a)\}_{a \in Q_1})$, where $V(i)$ is a $K$-vector space for every $i \in Q_0$, and $V(a) : V(t(a)) \to V(h(a))$ is a $K$- linear map for every $a \in Q_1$.
\end{definition}

For a fixed quiver $Q$ and field $K$, the collection of all representations of $Q$ is a category with morphisms given as follows.  If $V$, $W$ be representations of $Q$,  \emph{morphism} from $V$ to $W$, $\phi: V \to W$ is a collection of linear maps $\{\phi(i)\}_{i \in Q_0}$ with $\phi(i):V(i) \to W(i)$ such that the diagam below commutes for all $a \in Q_1$

\begin{center}

\begin{tikzpicture}[commutative diagrams/every diagram]
	\matrix[matrix of math nodes, name=m, commutative diagrams/every cell,row sep=.7cm,column sep=1cm] {
	 \pgftransformscale{0.2}
		V(t(a)) & V(h(a))\\
		W(t(a)) & W(h(a))\\ };
		
		\path[commutative diagrams/.cd, every arrow, every label]
			(m-1-1) edge node {$V(a)$} (m-1-2)
			(m-1-1) edge node[swap] {$\phi(t(a))$} (m-2-1)
			(m-2-1) edge node[swap] {$V(a)$} (m-2-2)
			(m-1-2) edge node {$\phi(h(a))$} (m-2-2);
		
\end{tikzpicture}

\end{center}

\medskip
We denote by $Rep(Q)$, the category of $K$-representations of the quiver $Q$.  When $\phi: V \to W$ is a morphsim from $V$ to $W$ and $\phi(i)$ is invertible for all $i$, then we say $\phi$ is an isomorphism.  If this is the case, we say that $V$ and $W$ are isomorphic. 
\begin{definition}
\label{support}
If $V$ is a representation of a quiver $Q$ we say the support of $V$, Supp$(V)$ is the set of all vertices $i \in Q_0$, such that $V(i)$ is not the zero vector space.
\end{definition}

If $Q$ is a quiver, the \emph{path algebra} $KQ$ is the $K$-vector space with basis consisting of all paths (including those of length zero).  Multiplication in $KQ$ is defined as the $K$-linear extension of concatenation of paths.  That is, if $p, q$ are paths, then $p \cdot q = pq$, if $pq$ is a path, and zero otherwise.  With this multiplicative structur, $KQ$ is a $K$-algebra.  When $Q$ has no oriented cycles, $KQ$ is finite-dimensional.  The following proposition shows the connection between quivers and algebras.

\begin{prop}
\label{quiver equivalence}
\label{bound quiver equivalence}
Let $Q$ be a quiver. Then, there exists a natural equivalence between $Rep(Q)$ and $KQ$-mod.
\end{prop}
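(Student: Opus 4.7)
The plan is to construct explicit mutually quasi-inverse functors $F \colon Rep(Q) \to KQ\text{-mod}$ and $G \colon KQ\text{-mod} \to Rep(Q)$. On objects, $F$ sends a representation $V$ to the $K$-vector space $F(V) = \bigoplus_{i \in Q_0} V(i)$, equipped with the unique $KQ$-action in which each lazy path $e_i$ acts as the canonical projector $\pi_i$ onto the summand $V(i)$, each arrow $a \in Q_1$ acts via the composite $V(a) \circ \pi_{t(a)}$ (viewed as a map landing in the summand $V(h(a))$), and longer paths act by composition. On morphisms, $F$ sends $\{\phi(i)\}_{i \in Q_0}$ to the direct-sum map $\bigoplus_i \phi(i)$; the commutativity condition satisfied by $\phi$ at each arrow is precisely what is required for this direct sum to intertwine the two $KQ$-actions.

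For $G$, I would exploit the orthogonal idempotent decomposition $1_{KQ} = \sum_{i \in Q_0} e_i$, which is valid because $Q_0$ is finite. Given a $KQ$-module $M$, set $G(M)(i) = e_i M$, so that $M = \bigoplus_i e_i M$ as $K$-vector spaces. For each arrow $a \in Q_1$, the identity $a = e_{h(a)} \cdot a \cdot e_{t(a)}$ in $KQ$ shows that left multiplication by $a$ restricts to a $K$-linear map $G(M)(a) \colon e_{t(a)} M \to e_{h(a)} M$. Any $KQ$-linear morphism $f \colon M \to N$ commutes with each $e_i$, hence restricts to maps $e_i M \to e_i N$, and these together form a morphism of representations.

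To see that $F$ and $G$ are quasi-inverse I would exhibit two natural isomorphisms: $GF(V) \cong V$ via the identification of $e_i F(V)$ with $V(i)$ under the projector definition, and $FG(M) \cong M$, which is literally the decomposition $M = \bigoplus_i e_i M$ with the $KQ$-action on the right-hand side matching by construction. Naturality in both cases is immediate from the componentwise nature of the constructions. The only substantive point I expect to dwell on is checking that the prescription on $F(V)$ defines a genuine $KQ$-action: since paths form a $K$-basis of $KQ$ and their multiplication is concatenation-or-zero, this reduces to verifying compatibility on basis elements, which follows from associativity of composition of linear maps together with the orthogonality relations $\pi_i \pi_j = \delta_{ij} \pi_i$. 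This is bookkeeping with idempotents rather than a genuine obstacle, and I do not anticipate further complications.
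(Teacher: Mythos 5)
Your construction is correct and is exactly the canonical argument: the mutually quasi-inverse functors built from the orthogonal idempotent decomposition $1_{KQ}=\sum_{i\in Q_0}e_i$ (valid since $Q_0$ is finite), with $M\mapsto\bigl(e_iM,\ a\cdot(-)\colon e_{t(a)}M\to e_{h(a)}M\bigr)$ in one direction and $V\mapsto\bigoplus_i V(i)$ with $e_i$ acting as the projector in the other. The paper itself gives no proof of Proposition~\ref{quiver equivalence} --- it is stated as standard background with pointers to the representation-theory literature --- so there is no in-paper argument to compare against; your write-up is what those references would supply. The one loose end (also present in the paper's statement, not a defect of your proof) is a finiteness mismatch: the paper's convention is that $KQ$-mod consists of finite-dimensional modules, while $Rep(Q)$ as defined is not restricted to pointwise finite-dimensional representations; the equivalence you construct restricts cleanly to the finite-dimensional subcategories on both sides, which is what is actually used in the sequel.
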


\subsubsection{The Poset Algebra for $P_n$}

We will now define the poset algebra $A(P_n)$ of the finite poset $P_n$.  For a discussion of the poset algebra for an arbitrary finite poset, see \cite{meehan_meyer_1}.  This discussion fits into the framework of \cite{meehan_meyer_1}, since the finite totally ordered set $P_n$ is exactly a $1$-Vee.  

\begin{definition}
Let $P$ be a finite poset.  Let $Q_P$ be the quiver with $Q_0 = P$.  There is an $a \in Q_1$ with $t(a) = x, h(a) = y$ if,
\begin{enumerate}[(i.)]
\item $x < y$, and 
\item there is no $t \in P$, with $x < t < y$.
\end{enumerate}
\end{definition}

The quiver $Q_P$ is called the Hasse quiver of the poset $P$.  By inspection, the Hasse quiver of $P$ is exactly the lattice of the poset with arrows corresponding to minimal proper relations. Note that if $Q_P$ is the Hasse quiver of the poset $P$ and there is an arrow that begins at a vertex $v$ and ends at the vertex $w$, it is necessarily unique. Thus we may omit labeling arrows.

\begin{ex} The Hasse quiver of $P_n$ is drawn below.
\begin{center}
\includegraphics[scale=2]{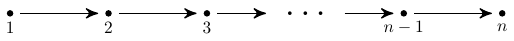}
\end{center}
\end{ex}

This quiver is the so-called equioriented $\mathbb{A}_n$ quiver.

\begin{definition}
Let $P = P_n$ be totally ordered.  Then the incidence algebra $A(P_n)$ of $P_n$ is the quiver algebra of $\mathbb{A}_n$.
\end{definition}

By Proposition \ref{quiver equivalence} we may identify every module for the algebra $A(P_n)$ with a representation of the equioriented $\mathbb{A}_n$ quiver, and vice versa. Thus, from this point forward, we pass freely between generalized persistence modules for $P_n$ with values in $K$-mod, representations of the quiver $\mathbb{A}_n$ and modules for the algebra $A(P_n)$.  

\begin{definition}
\label{convex}
An indecomposable module $M$ for $A(P_n)$ is convex if 
it is isomorphic to a module $M'$ where $M'$ satisfies
\begin{enumerate}[(i.)]
\item for all $x,y \in \text{Supp}(M') \text{, with } x \leq y, \text{ the linear map } M'(x \leq y)\text{ is given by } Id_K,$and 
\item for all vertices $x$, the dimension of the $K$-vector space at $M(x)$ is at most $1$.
\end{enumerate}
\end{definition}

Note that the definition of convex modules makes sense for an arbitrary finite poset.  In general, a poset algebra may have infinitely many isomorphism classes of indecomposable modules, but necessarily has only finitely many convex modules (for more on convex modules for poset algebras see \cite{meehan_meyer_1}). In the algebra $A(P_n)$, however, every indecomposable module is convex.  Thus, while in \cite{meehan_meyer_1} we restricted to $\mathcal{C}$, the category generated by convex modules, this in fact corresponds to the full module category for $A(P_n)$.  It is well known that the isomorphism classes of convex modules for $A(P_n)$ are in one-to-one correspondence with the closed intervals in the poset $P_n$.  That is to say, if $x \leq y$ in $P_n$ there is a unique isomorphism class of an indecomposable module represented by $I$ with Supp$(I) = [x,y]$.  Moreover, all indecomposable modules for $A(P_n)$ are of this form up to isomorphism.  If $I$ is isomorphic to the convex module with support given by $[x,y]$, we write $I \sim [x,y]$.

We now conclude with a statement about homomorphisms between convex modules.  Since $P_n$ is a $1$-Vee, we have the following.

\begin{lemmanonum}(see for example Lemma 34 \cite{meehan_meyer_1})
\label{hom2}
Let $P=P_n$ be totally ordered, and let $I, J$ be convex modules.  Then, Hom$(I,J)\cong K$ or $0$ (as a vector space).
\end{lemmanonum}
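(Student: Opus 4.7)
The plan is to compute $\text{Hom}(I,J)$ directly from the interval description of convex modules, using the fact that every indecomposable $A(P_n)$-module is convex and thus corresponds to some closed interval. Write $I\sim [a,b]$ and $J\sim[c,d]$, realized so that each $I(x)$ and $J(x)$ is either $K$ or $0$, and each structure map inside a support is $\mathrm{id}_K$.

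First I would unwind what a morphism is. A natural transformation $\phi\colon I\to J$ consists of linear maps $\phi(x)\colon I(x)\to J(x)$ for every $x\in P_n$. Since both $I(x)$ and $J(x)$ lie in $\{0,K\}$, the map $\phi(x)$ is automatically zero unless $x\in [a,b]\cap[c,d]$, in which case it is a scalar $\lambda_x\in K$. Thus $\phi$ is encoded by the tuple $(\lambda_x)_{x\in [a,b]\cap[c,d]}$. If the intersection is empty we already conclude $\text{Hom}(I,J)=0$, giving one of the two possibilities.

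Next I would use commutativity of the naturality squares. For any covering arrow $x\to x+1$ in the Hasse quiver $Q_{P_n}$ of the equioriented $\mathbb{A}_n$, with both $x,x+1$ inside the overlap, the structure maps $I(x\le x+1)$ and $J(x\le x+1)$ are both $\mathrm{id}_K$, so commutativity forces $\lambda_{x+1}=\lambda_x$. Iterating, all scalars on the overlap coincide with a single value $\lambda\in K$. Hence the entire morphism $\phi$ is determined by this one scalar, so $\dim_K \text{Hom}(I,J)\le 1$, which means $\text{Hom}(I,J)\cong K$ or $0$.

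The only remaining subtlety — which is the main obstacle — is verifying that $\dim_K\text{Hom}(I,J)=1$ (rather than forced to $0$) exactly when the interval data is compatible at the boundary. Concretely, one must examine the two squares corresponding to the arrows entering/exiting the overlap and check which configurations of endpoints $a,b,c,d$ force $\lambda=0$ and which leave $\lambda$ free. For the claim of the lemma this refinement is not needed: once one has shown the morphism space is cut out by a single scalar parameter, the statement $\text{Hom}(I,J)\cong K$ or $0$ follows immediately. One may optionally note, for later use, that the free case occurs precisely when $c\le a\le d\le b$, in which the common scalar $\lambda$ determines a genuine morphism and Hom is one-dimensional.
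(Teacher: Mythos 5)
Your proof is correct. The paper does not include its own argument for this lemma---it is cited as Lemma 34 of \cite{meehan_meyer_1}---but the dimension count you give is the standard one: reduce to the canonical interval form where each vector space is $K$ or $0$ and internal maps are $\mathrm{id}_K$, observe that $\phi(x)$ vanishes outside $\mathrm{Supp}(I)\cap\mathrm{Supp}(J)$, note that this overlap is a (possibly empty) interval in a total order, and use the naturality squares along the Hasse arrows inside the overlap to show all scalars coincide, giving $\dim_K\Hom(I,J)\leq 1$. Your remark that the full determination of when $\Hom$ is nonzero (the condition $c\leq a\leq d\leq b$) is unnecessary for this particular statement is also correct; that refinement is exactly the content of the separate lemma quoted from Lemma 40 of \cite{meehan_meyer_1} immediately afterward.
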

In addition, when $I, J$ are convex, conditions on their supports determine the cardinality of the vector space Hom$(I,J)$.

\begin{lemmanonum}(see for example Lemma 40 \cite{meehan_meyer_1})
\label{hominterval}
If $I,J$ are convex modules for $P_n$ with $I \sim [x,X], J \sim [y,Y]$.  then Hom$(I,J)\neq0$ if and only if $$y\leq x\leq Y\leq X.$$ 
\end{lemmanonum}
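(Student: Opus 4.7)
The plan is to prove both implications directly by exploiting the very rigid structure of convex modules over $A(P_n)$: on the support, every structure map is the identity on a one-dimensional vector space, and outside the support every vector space is $0$. A morphism $\phi : I \to J$ is thus determined by the scalars $\phi(v) \in K$ at those vertices where both modules are nonzero, subject to the compatibility squares. I will combine the previous lemma, which tells us $\Hom(I,J)$ is already at most one-dimensional, with a propagation argument along consecutive arrows of the Hasse quiver.

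For the forward direction, suppose $\phi \neq 0$, so there exists $v_0$ with $\phi(v_0) \neq 0$. Since $\phi(v_0) : I(v_0) \to J(v_0)$ is nonzero, both $v_0 \in [x,X]$ and $v_0 \in [y,Y]$, which already yields $x \leq v_0 \leq Y$. To obtain $y \leq x$, I propagate $\phi$ leftward: for any $v \in [x,v_0]$, both $v$ and $v+1$ lie in $[x,X]$, so $I(v \leq v+1) = \Id_K$, and the commuting square gives $\phi(v_0) = J(v \leq v+1) \circ \phi(v)$ (after iteration). If some vertex $v \in [x,v_0]$ failed to lie in $[y,Y]$, then since $v \leq v_0 \leq Y$ we would have $v < y$, forcing $J(v) = 0$ and hence $\phi(v) = 0$, so the corresponding $J$-map factors through $0$, contradicting $\phi(v_0) \neq 0$. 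Therefore $[x, v_0] \subseteq [y,Y]$, and in particular $y \leq x$. A symmetric propagation rightward shows that if $Y > X$ then one can push $\phi$ up to $\phi(X) \neq 0$, and then the square for $X \leq X+1$ reads $0 = I(X \leq X+1) \circ \phi(X+1)$ on one side but $J(X \leq X+1) \circ \phi(X) = \Id_K \circ \phi(X) \neq 0$ on the other (since $X+1 \in [y,Y]$ gives $J(X \leq X+1) = \Id_K$), a contradiction. Hence $Y \leq X$.

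For the backward direction, assume $y \leq x \leq Y \leq X$, so that $[x,X] \cap [y,Y] = [x,Y]$. I will simply write down the obvious candidate: define $\phi(v) = \Id_K$ for $v \in [x,Y]$ and $\phi(v) = 0$ otherwise. Checking commutativity of the square for each cover relation $v \leq v+1$ splits into the cases (i) both $v,v+1 \in [x,Y]$, where all four maps are $\Id_K$; (ii) $v \in [x,Y]$, $v+1 \notin [x,Y]$, where necessarily $v+1 > Y$, so $J(v+1) = 0$ and both compositions vanish; (iii) $v \notin [x,Y]$, $v+1 \in [x,Y]$, where $v < x$, so $I(v) = 0$ and again both compositions vanish; (iv) both outside, where everything is zero. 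In every case the square commutes, and $\phi \neq 0$ since $\phi(x) = \Id_K$.

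The only real obstacle is bookkeeping: one must carefully justify at each propagation step that the relevant structure map is actually the identity (which uses that the endpoints of the segment lie in the appropriate support) and that no alternative to the desired inequality escapes the contradiction. Because convex modules on an $\mathbb{A}_n$ quiver are so constrained, and because by the preceding lemma the whole $\Hom$-space is at most one-dimensional, these steps reduce to routine verifications once the propagation framework is set up.
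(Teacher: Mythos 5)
Your proof is correct, and the propagation argument is the standard way to verify this statement for representations of the equioriented $\mathbb{A}_n$ quiver; note that the paper itself does not reprove the lemma but cites it from \cite{meehan_meyer_1}. One small slip: in the rightward propagation you write the square at $X \leq X+1$ as ``$0 = I(X \leq X+1)\circ\phi(X+1)$,'' but the composition should be $\phi(X+1)\circ I(X\leq X+1)$; the conclusion is unaffected since that side is zero because $I(X+1)=0$.
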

When Hom$(I,J) \neq 0$, let $\chi([x,Y])$ denote the characteristic function on the interval $[x,Y]$.  Then, the linearization of $\chi([x,Y])$, $\Phi_{I,J}$ is an explicit generator for the vector space Hom$(I,J)$.

\subsection{Interleaving Metrics on $P$ and $P^+$}
We begin with the construction of the interleaving metric of Bubenik, de Silva and Scott (see \cite{bubenik}). In the interest of generality, initially we let $P$ be an arbitrary poset.
\begin{definition}
Let $P$ we a finite poset and $\mathcal{T}(P^-)$ be the collection of endomorphisms of the poset $P$ with the additional property that $\Lambda p \geq p$ for all $p \in P$.  We call the elements of $\mathcal{T}(P^-)$  translations.  
\end{definition}

Explicitly, a function $\Lambda: P \to P$ is an element of $\mathcal{T}(P^-)$ if and only if $x \leq y \implies \Lambda x \leq \Lambda y \textrm{, and }
\Lambda p \geq p\textrm{, for all } p \in P.$  It is easy to see that the set $\mathcal{T}(P^-)$ is itself a poset under the relation $\Lambda \leq \Gamma$ if for all $p \in P$, $\Lambda p \leq \Gamma p$, and that $\mathcal{T}(P^-)$ is a monoid under functional composition.  

We now consider interleaving metric suggested by \cite{bubenik} on generalized persistence modules.  Let $d$ be any metric on a finite poset $P$, we define a height function $h = h(d)$ on $\mathcal{T}(P^-)$.

\begin{definition}
For $\Lambda \in \mathcal{T}(P^-)$ set $h(\Lambda) = \textrm{sup}\{d(x,\Lambda x): x \in P \}$
\end{definition}

Of course, since $P$ is finite, we may replace a supremum with maximum.  Proceeding as in \cite{bubenik}, let $\mathcal{D}$ be any category.  Then, $\mathcal{T}(P^-)$ acts on $\mathcal{D}^P$ on the right by the formulae
$$(F \cdot \Gamma)(p) = F(\Gamma p), \text{ and } (F \cdot \Gamma)(p \leq q) =F(\Gamma p \leq \Gamma q).$$

\begin{definition}

Let $F,G\in\mathcal{D}^P$ and let $\Gamma, \Lambda$ be translations on $P$. A $(\Gamma,\Lambda)$-interleaving between $F$ and $G$ is a pair of morphisms in $\mathcal{D}^P$, $\phi:F\to G\Lambda,\,\,\,\,\psi:G\to F\Gamma$ such that the following diagrams commute.

\begin{center}

\begin{tikzpicture}[commutative diagrams/every diagram]
	\matrix[matrix of math nodes, name=m, commutative diagrams/every cell,row sep=.7cm,column sep=.45cm] {
	 \pgftransformscale{0.2}
		F & & F\Gamma\Lambda  & & F\Gamma & \\
		& G\Lambda  & & G & & G\Lambda \Gamma\\ };
		
		\path[commutative diagrams/.cd, every arrow, every label]
			(m-1-1) edge node {} (m-1-3)
			(m-1-1) edge node[swap] {$\phi$} (m-2-2)
			(m-2-2) edge node[swap] {$\psi \Lambda $} (m-1-3)
			(m-2-4) edge node {$\psi$} (m-1-5)
			(m-2-4) edge node[swap] {} (m-2-6)
			(m-1-5) edge node {$\phi \Gamma$} (m-2-6);
					
\end{tikzpicture}

\end{center}

\end{definition}
The two horizontal maps in the diagram above are given by the formulae
$$\text{for all }p \in P, F( p \leq \Gamma\Lambda p) \text{ and } G(p \leq \Lambda \Gamma p) \text{, respectively.}  $$
Note that two generalized persistence modules are $(1,1)$-interleaved, where $1$ is the identity translation, if and only if they are isomorphic.  
\begin{definition}[\cite{bubenik}]
Let $M,N\in\mathcal{D}^P$. Given any metric $d$ on $P$, we define $D = D(d)$ by the formula;
\begin{eqnarray*}
&&D(M,N) :=\textrm{inf} \{ \epsilon : \exists (\Gamma,\Lambda) \textrm{-interleaving with } {\textrm{sup}}_{p \in P}d(p,\Gamma p), {\textrm{sup}}_{p \in P}d(p,\Lambda p) \leq \epsilon \}\\
&&=\textrm{inf}\{\epsilon: \exists (\Gamma,\Lambda) \textrm{-interleaving with } h(\Lambda), h(\Gamma) \leq \epsilon\}.
\end{eqnarray*}
\end{definition}

From Bubenik, de Silva and Scott (\cite{bubenik}), we know that $D$ is a pseudometric on ${\mathcal{D}}^P$, and for any category $\mathcal{F}$ and functor $R: \mathcal{D} \to \mathcal{F}$, post-composition by $R$ is a contraction from ${\mathcal{D}}^P$ to ${\mathcal{F}}^P$.  With hard stability theorems in mind, the fact that post-composition by any functor induces a contraction is particularly noteworthy.  Still, independent of the choice of metric $d$ on $P$, without modification the resulting metric $D=D(d)$ need not be a proper metric, simply because the collection of translations is not be rich enough to provide interleavings between arbitrary generalized persistence modules.  This is unfortunate, since $\mathcal{T}(P^-)$ is defined naturally for any poset $P$.  The failure comes from the fact that finite posets will always have fixed points. We now restrict our attention to the case where $\mathcal{D}$ is $K$-mod.

We say that $p \in P$ is a \emph{fixed point} of $P$, if $\Lambda p = p$ for all $\Lambda$ in $\mathcal{T}(P^-)$.  If $p$ is a fixed point of $P$ and $dim_K (M(p)) < dim_K (N(p)) $, then the diagram below does not commute for any morphisms $\phi, \psi$ and any translations $\Lambda, \Gamma$, since the composition cannot have full rank as required. Thus, for any choice of metric $d$, $D(M,N) = \infty$.

\begin{center}
\begin{tikzpicture}[commutative diagrams/every diagram]
	\matrix[matrix of math nodes, name=m, commutative diagrams/every cell,row sep=.7cm,column sep=.45cm] {
	 \pgftransformscale{0.2}
		N(p) & & N(\Gamma\Lambda p)=N(p) \\
		& M(\Lambda p)= M(p) & \\ };
		
		\path[commutative diagrams/.cd, every arrow, every label]
			(m-1-1) edge node {$N(p\leq \Gamma\Lambda p)= Id_{N(p)}$} (m-1-3)
			(m-1-1) edge node[swap] {$\phi_p$} (m-2-2)
			(m-2-2) edge node[swap] {$\psi_{\Lambda p}=\psi_p$} (m-1-3);

\end{tikzpicture}
\end{center}

Note that finite posets will always have fixed points (for a more general discussion on fixed points for arbitrary finite posets, see \cite{meehan_meyer_1}). In $P_n$, the maximal element is necessarily a fixed point. In particular, if $p$ is a fixed point of $P$, with $p \in \textrm{Supp}(M), p \notin \textrm{Supp}(N)$, then $D(M,N) = \infty$.  Because of this, there is no hope of realizing any honest metric as an interleaving metric on any finite poset.

With this in mind, we make the following slight modification.  We set $P^+ = P \cup \{\infty \}$ with added relations $p \leq \infty$, for all $p \in P$.  We may now view $A(P)$-mod as the full subcategory of $A(P^+)$-modules where all objects are supported in $P$.  Note that the Hasse quiver for $P^+$ is simply the Hasse quiver for $P$ with added edges connecting maximal elements of $P$ to the element $\infty$. Now there exists an interleaving between any two generalized persistence modules for $P^+$ that are supported in $P$.

We will now define $d$ on ${(P_n)}^+$ by attaching positive weights to each edge of the Hasse quiver of ${(P_n)}^+$. In the so-called \emph{democratic} case below, all edges in $P_n$ are given the same value, while the new edge is given a potentially different value. In the second case the choice of weights is arbitrary.


\begin{center}
\includegraphics[scale=2]{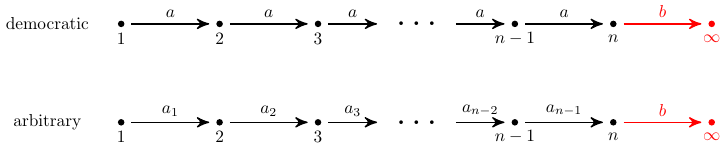}
\end{center}


For an analysis of the democratic case for more general posets, see \cite{meehan_meyer_1}.  In this paper we will investigate the arbitrary (undemocratic) choice of weights for finite totally ordered sets.  This corresponds to the labeling of the Hasse quiver above, where $a_i, b \in (0,\infty)$. When this is the case, we say  $P_n$ has weights given by $(\{a_i\},b)$.  
\begin{definition}
Now, we let $d_{(\{a_i\},b)}$ denote the weighted graph metric on the Hasse quiver of ${(P_n)}^+$, and let $\mathcal{D}$ be a category.  Then, $D = D(d_{(\{a_i\},b)})$ is the interleaving metric corresponding to the weights $d_{(\{a_i\},b)}$ on  ${\mathcal{D}}^{(P_n)}$.
\end{definition} 
Now $D$ defines the structure of a finite metric space on the isomorphism classes of elements of ${\mathcal{D}}^{(P_n)}$.  We will now write $\mathcal{T}(P_n)$ for $\mathcal{T}(({(P_n)}^+)^-)$, and from this point forward, we only consider $\mathcal{D} = K$-mod, and $P = P_n$ suspended at infinity. By Proposition \ref{quiver equivalence}, we write $A(P_n)$-mod for $\mathcal{D}^P$.  Since $P_n$ is a $1$-Vee, we obtain the following two useful lemmas.

\begin{lemmanonum}(Lemma 22 \cite{meehan_meyer_1})
\label{T(P)}
Consider $P_n$ with weights given by $({\big\{a_i\big\}}_{i=1}^n,b)$, with $a_i, b >0$ in $\mathbb{R}$.  Let $d$ denote the corresponding weighted graph metric on the Hasse quiver of $P^+$.  Then,
\begin{enumerate}[(i.)]
\item For each $\epsilon \in \{ h(\Lambda): \Lambda  \in \mathcal{T}(P)\}$, the set $\{ \Gamma \in \mathcal{T}(P): h(\Gamma) = \epsilon \} $ has a unique maximal element ${\Lambda}_{\epsilon}$.
\item The set $\{ {\Lambda}_{\epsilon} \}$ is totally ordered, and ${\Lambda}_{\epsilon} \leq {\Lambda}_{\delta}$ if and only if $\epsilon \leq \delta$.
\item If $\Lambda, \Gamma \in \mathcal{T}(P)$ with $h(\Lambda), h(\Gamma) \leq \epsilon$ then there exists a ${\Lambda}_{\delta}$ with $\Lambda, \Gamma \leq {\Lambda}_{\delta}$, and $h({\Lambda}_{\delta})= \delta = \textrm{ max} \{ h(\Lambda), h(\Gamma) \}$.
\end{enumerate}

\end{lemmanonum}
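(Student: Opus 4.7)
The plan is to construct a canonical candidate $\Lambda_\epsilon$ directly and verify that it is the pointwise-maximum element of $\{\Gamma \in \mathcal{T}(P) : h(\Gamma) \leq \epsilon\}$, from which all three statements will fall out. For each $\epsilon \geq 0$ and $p \in (P_n)^+$, define
\[
\Lambda_\epsilon(p) := \max\{q \in (P_n)^+ : q \geq p \text{ and } d(p,q) \leq \epsilon\},
\]
where the maximum exists because $(P_n)^+$ is finite and totally ordered. To see that $\Lambda_\epsilon \in \mathcal{T}(P)$, I need $\Lambda_\epsilon$ to be order-preserving. Take $p \leq p'$: if $p' \geq \Lambda_\epsilon(p)$, then $\Lambda_\epsilon(p') \geq p' \geq \Lambda_\epsilon(p)$ trivially; otherwise $p \leq p' \leq \Lambda_\epsilon(p)$, and since the Hasse quiver of $(P_n)^+$ is a chain with strictly positive edge weights, $d(p',\Lambda_\epsilon(p)) \leq d(p,\Lambda_\epsilon(p)) \leq \epsilon$, so $\Lambda_\epsilon(p)$ is a valid target for $p'$ and therefore $\Lambda_\epsilon(p') \geq \Lambda_\epsilon(p)$.

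The same chain-and-positive-weights argument yields two principles I will use repeatedly. First (\emph{dominance}): any $\Gamma \in \mathcal{T}(P)$ with $h(\Gamma) \leq \epsilon$ satisfies $\Gamma(p) \geq p$ and $d(p,\Gamma(p)) \leq \epsilon$ for every $p$, hence $\Gamma(p)$ is a candidate in the set defining $\Lambda_\epsilon(p)$, giving $\Gamma \leq \Lambda_\epsilon$ pointwise. Second (\emph{monotonicity of $h$}): if $\Gamma \leq \Lambda$ in $\mathcal{T}(P)$, then $d(p,\Gamma(p)) \leq d(p,\Lambda(p))$ for every $p$, so $h(\Gamma) \leq h(\Lambda)$.

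With these in hand, all three conclusions follow quickly. For (i), when $\epsilon = h(\Lambda)$ is an attained height, dominance gives $\Lambda \leq \Lambda_\epsilon$, and monotonicity then forces $h(\Lambda_\epsilon) \geq \epsilon$; since $h(\Lambda_\epsilon) \leq \epsilon$ is automatic from the construction, $h(\Lambda_\epsilon) = \epsilon$, and uniqueness is immediate because $\Lambda_\epsilon$ dominates every element of $\{\Gamma : h(\Gamma) = \epsilon\}$. For (ii), $\epsilon \leq \delta$ yields $\Lambda_\epsilon \leq \Lambda_\delta$ directly (every target allowed by the tighter constraint is allowed by the looser one), and the converse is monotonicity of $h$ applied to the relation $\Lambda_\epsilon \leq \Lambda_\delta$. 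For (iii), set $\delta := \max\{h(\Lambda), h(\Gamma)\}$, which is still an attained height; both $\Lambda$ and $\Gamma$ have height $\leq \delta$, so dominance gives $\Lambda, \Gamma \leq \Lambda_\delta$, and the argument in (i) shows $h(\Lambda_\delta) = \delta$.

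The only delicate step is verifying that $\Lambda_\epsilon$ is order-preserving, and this is where I rely crucially on $(P_n)^+$ being totally ordered together with the strict positivity of the weights. In a general poset with incomparable branches, the naive pointwise-maximum of allowed targets need not respect the order, which is the structural reason the statement is formulated for $P_n$ (a $1$-Vee) rather than for an arbitrary finite poset.
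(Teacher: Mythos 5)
Your proof is correct. The paper does not reproduce the proof (it cites Lemma 22 of \cite{meehan_meyer_1} externally), but your construction is the canonical one: define $\Lambda_\epsilon(p) = \max\{q \geq p : d(p,q) \leq \epsilon\}$, verify it is order-preserving using that $(P_n)^+$ is a chain with strictly positive edge weights (so $p \leq p' \leq q$ forces $d(p',q) \leq d(p,q)$), and deduce (i)--(iii) from the two cleanly stated observations that every translation of height at most $\epsilon$ sits pointwise below $\Lambda_\epsilon$ and that $h$ is monotone with respect to the pointwise order on $\mathcal{T}(P)$; the step showing $h(\Lambda_\epsilon) = \epsilon$ when $\epsilon$ is an attained height correctly uses both. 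Your closing remark about why the argument needs total order is also on point and is exactly the structural reason the lemma is stated for $1$-Vees rather than arbitrary finite posets.
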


The above lemma justifies the passage from a $(\Lambda,\Gamma)$-interleaving to a $(\Lambda_\epsilon,\Lambda_\epsilon)$-interleaving where $\epsilon$ is the maximum of $h(\Lambda)$ and $h(\Gamma)$.  Moreover, the monoid $\mathcal{T}(P)$ acts on convex modules in the sense below.

\begin{lemmanonum}(Lemma 31 \cite{meehan_meyer_1})
\label{hom1}
Let $P=P_n$ be totally ordered, and let $I, J$ be convex module and $\Lambda \in \mathcal{T}(P)$.  Then, $I \Lambda$ is either the zero module or convex.
\end{lemmanonum}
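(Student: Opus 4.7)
The plan is to unpack what $I\Lambda$ looks like explicitly from the definitions, and then verify that its support is a (possibly empty) interval on which all structure maps are the identity.

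First I would write $I$, up to isomorphism, in its standard convex form: since $I$ is convex, there is an interval $[x,X] \subseteq P_n$ with $I(p) = K$ for $p \in [x,X]$, $I(p) = 0$ otherwise, and $I(p \leq q) = \text{Id}_K$ whenever $p,q \in [x,X]$. Viewed in $A(P_n^+)$-mod, we also have $I(\infty) = 0$. From the formulas $(I\Lambda)(p) = I(\Lambda p)$ and $(I\Lambda)(p \leq q) = I(\Lambda p \leq \Lambda q)$, I would then read off that $(I\Lambda)(p) \neq 0$ precisely when $\Lambda p \in [x,X]$, so that
\[
\text{Supp}(I\Lambda) \;=\; S \;:=\; \{\, p \in P_n^+ : x \leq \Lambda p \leq X \,\}.
\]

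Next I would argue $S$ is either empty or a closed interval in $P_n$. Since $P_n^+$ is totally ordered and $\Lambda$ is order-preserving, the preimage $\Lambda^{-1}([x,X])$ of an interval is an interval: if $p_1, p_2 \in S$ with $p_1 \leq p \leq p_2$ then $\Lambda p_1 \leq \Lambda p \leq \Lambda p_2$, forcing $x \leq \Lambda p \leq X$. Also $\infty \notin S$, because $I(\infty) = 0$ (equivalently $\infty \notin [x,X]$ in $P_n$), so $S \subseteq P_n$. If $S = \emptyset$ then $I\Lambda$ is the zero module and we are done.

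Assume now $S = [y,Y]$ with $y,Y \in P_n$. I would verify the two conditions in the definition of convexity. For (ii), each stalk $(I\Lambda)(p) = I(\Lambda p)$ has dimension at most $1$ because $I$ does. For (i), if $p \leq q$ are both in $S$, then $\Lambda p \leq \Lambda q$ both lie in $[x,X]$, so by convexity of $I$ we have $(I\Lambda)(p \leq q) = I(\Lambda p \leq \Lambda q) = \text{Id}_K$. Hence $I\Lambda$ is indecomposable with support the interval $[y,Y]$ and identity transition maps, i.e., convex.

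There is no real obstacle here beyond bookkeeping; the only subtle point is confirming that $\infty$ never enters $\text{Supp}(I\Lambda)$, which follows immediately from $I$ being supported in $P_n$. The proof is essentially a direct application of monotonicity of $\Lambda$ on a totally ordered set, combined with the explicit form of a convex module.
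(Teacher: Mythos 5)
The paper does not reprove this lemma; it simply cites it from \cite{meehan_meyer_1}, so there is no proof in this document to compare against. That said, your argument is correct and is the natural one: compute $\operatorname{Supp}(I\Lambda) = \Lambda^{-1}([x,X])$, use monotonicity of $\Lambda$ on a totally ordered set to see this preimage is an interval (or empty), note $\Lambda(\infty)=\infty$ so $\infty$ stays outside the support, and read off conditions (i.) and (ii.) of convexity from the formula $(I\Lambda)(p\leq q)=I(\Lambda p\leq\Lambda q)$. The one small thing worth making explicit is that a module with nonempty interval support, one-dimensional stalks, and identity transition maps is indecomposable (its endomorphism ring is $K$, hence local), since the paper's definition of convex presupposes indecomposability; you assert this in passing but do not justify it. Otherwise the proof is complete.
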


\subsection{Bottleneck Metrics}
\label{sec bottle}

A bottleneck metric provides an alternate metric structure on the set of isomorphism classes of $A(P_n)$-modules.  For a more general discussion of a bottleneck metric on a subcategory of a module category see \cite{meehan_meyer_1}.

The construction begins with a metric $d_2$ on the set of isomorphism classes of indecomposable $A(P_n)$-modules $\Sigma$.  Additionally, we require a function $W:{\Sigma} \to (0,\infty)$, compatible with $d_2$ in the sense that for all $\sigma_1, \sigma_2 \in \Sigma$, 
$$|W(\sigma_1) - W(\sigma_2)| \leq d_2(\sigma_1,\sigma_2).$$ 

Following \cite{induced_matchings}, \cite{zigzag},  we define a matching between two multisets $S, T$ of ${\Sigma}$ to be a bijection $f:S' \to T'$ between multisubsets $S' \subseteq S$ and $T' \subseteq T$.  For $\epsilon \in (0,\infty)$, we say a matching $f$ is an $\epsilon$-matching if the following conditions hold;
\begin{enumerate}[(i)]
\item $\textrm{for all }s \in S, W(s)>\epsilon \implies s \in S'$ 
\item $\textrm{for all }t \in T, W(t)>\epsilon \implies t \in T' $, and
\item $d_2(s,f(s) ) \leq \epsilon$, for all $s \in S$.
\end{enumerate}

The intuition is that $W$ measures the \emph{size} of an element of ${\Sigma}$, we call $W(\sigma)$ the \emph{width} of sigma.  Thus, in an $\epsilon$-matching, elements of $S$ and $T$ which are identified are within $\epsilon$, while all those not identified have width at most $\epsilon$.  

For an $A(P_n)$-module $M$, the barcode of $M$, $B(M)$, is the multiset of isomorphism classes of indecomposable summands of $M$ with their corresponding multiplicities.  Thus, $B(M)$ is precisely a multiset of elements in $\Sigma $.
\begin{definition}
Let $S, T$ be two finite multisubsets of $\Sigma$.  Suppose $d_2$ and $W$ are compatible.  Then the bottleneck distance between $S$ and $T$ is defined by, 
$$D_B(S,T) = \textrm{inf} \{\epsilon \in \mathbb{R}: \textrm{there exists and }\epsilon \textrm{-matching between  }S, T\} $$

\end{definition}
Now, if $M, N$ are $A(P_n)$-modules, we may identify $M, N$ with their barcodes $B(M), B(N)$, two multisubsets of $\Sigma$.  This identification is clearly constant on isomorphism classes.  Then, we define
$$ D_B(M,N) := D_B(B(M),B(N)).$$

While there are many examples of bottleneck metrics in the literature, in this paper, we will choose $d_2$ to be the interleaving metric corresponding to the weight $(\{a_i\},b)$.  Our width will be an algebraic analogue of the width of the support of a one-dimensional persistence module. Our definition will come from the following lemma.

\begin{lemmanonum}(Lemma 23 \cite{meehan_meyer_1})
\label{W}
Consider $P_n$ with weights given by $({\big\{a_i\big\}}_{i=1}^n,b)$, with $a_i, b >0$ in $\mathbb{R}$.  Let $I$ be convex.  Then, the following are equal;
\begin{enumerate}[(i)]
\item $W_1(I) = \textrm{min}\{\epsilon: \exists \Lambda, \Gamma \in \mathcal{T}(P), h(\Lambda), h(\Gamma) \leq \epsilon, \textrm{and Hom}(I,I\Lambda \Gamma) = 0\}$
\item $W_2(I) = \textrm{min}\{\epsilon: \exists \Lambda \in \mathcal{T}(P), h(\Lambda) \leq \epsilon, \textrm{ and Hom}(I,I {\Lambda}^2)=0\}$.
\item $W_3(I) = \textrm{min}\{\epsilon:\exists {\Lambda}_{\epsilon} \in \mathcal{T}(P) \textrm{with Hom}(I,I{{\Lambda}_{\epsilon}}^2)=0\}$.
\end{enumerate}
\end{lemmanonum}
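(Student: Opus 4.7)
The plan is to rewrite all three quantities as statements about a single numerical condition on $\mu x$, and then invoke the monoid structure of $\mathcal{T}(P_n)$ to pass between witnesses. First I establish the characterization
$$\Hom(I,I\mu)=0\iff \mu x>X$$
for a convex module $I\sim[x,X]$ and any $\mu\in\mathcal{T}(P_n)$. Since $(I\mu)(p)=I(\mu p)$ and $\mu$ is a monotone poset map, $I\mu$ is supported on the (possibly empty) closed interval $[a,b]$ with $a=\min\{p:\mu p\ge x\}$ and $b=\max\{p:\mu p\le X\}$. Because $\mu p\ge p$ for every $p$, whenever the support is nonempty one has automatically $a\le x$ and $b\le X$, so the hom-interval criterion reduces $\Hom(I,I\mu)\neq 0$ to the single inequality $x\le b$, equivalently $\mu x\le X$. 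This is consistent with the edge case $\mu x=\infty$ in $P_n^+$, which forces the support of $I\mu$ to be empty.

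With this characterization the three widths become
\begin{align*}
W_1(I)&=\min\{\epsilon:\exists\,\Lambda,\Gamma\in\mathcal{T}(P_n),\ h(\Lambda),h(\Gamma)\le\epsilon,\ \Lambda(\Gamma x)>X\},\\
W_2(I)&=\min\{\epsilon:\exists\,\Lambda\in\mathcal{T}(P_n),\ h(\Lambda)\le\epsilon,\ \Lambda^2 x>X\},\\
W_3(I)&=\min\{\epsilon:\Lambda_\epsilon^2 x>X\},
\end{align*}
each minimum existing since $\mathcal{T}(P_n)$ is finite. The inequalities $W_1(I)\le W_2(I)\le W_3(I)$ are then immediate: setting $\Gamma=\Lambda$ converts any $W_2$-witness into a $W_1$-witness, and $\Lambda_\epsilon$ has height exactly $\epsilon$, so any $W_3$-witness is automatically a $W_2$-witness.

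For the reverse $W_3(I)\le W_1(I)$, suppose $\Lambda,\Gamma\in\mathcal{T}(P_n)$ witness $W_1(I)\le\epsilon$ and set $\delta=\max(h(\Lambda),h(\Gamma))\le\epsilon$. Part (iii) of the preceding lemma on $\mathcal{T}(P_n)$ produces $\Lambda_\delta$ with $\Lambda,\Gamma\le\Lambda_\delta$; monotonicity of $\Lambda_\delta$ together with $\Gamma\le\Lambda_\delta$ gives
$$\Lambda_\delta^2 x\ge\Lambda_\delta(\Gamma x)\ge\Lambda(\Gamma x)>X,$$
and part (ii) yields $\Lambda_\delta\le\Lambda_\epsilon$, so $\Lambda_\epsilon^2 x\ge\Lambda_\delta^2 x>X$ witnesses $W_3(I)\le\epsilon$. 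The main obstacle is the first step: recognizing that $\Hom(I,I\mu)=0$ collapses to the simple condition $\mu x>X$ via careful analysis of the support of $I\mu$ under the monotone map $\mu$, and confirming that only the left endpoint $x$ of $[x,X]$ controls vanishing. Once this reduction is in place, the remainder is a short application of the ordering and composition properties of the monoid $\mathcal{T}(P_n)$.
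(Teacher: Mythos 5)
Your argument is correct, but note that this lemma is cited from \cite{meehan_meyer_1} (their Lemma~23) rather than proved in the present paper, so there is no in-text proof to compare against. On its own merits, the key step --- reducing $\Hom(I,I\mu)=0$ to the single scalar condition $\mu x > X$ via the hom-interval criterion and the observation that $I\mu$ is still convex with left endpoint $a\le x$ and right endpoint $b\le X$ --- is exactly the right move, and the cycle $W_1\le W_2\le W_3\le W_1$ then follows cleanly from the poset/monoid structure of $\mathcal{T}(P_n)$. One small redundancy in the final direction: once you have $\Lambda_\delta^2 x > X$ with $\delta=\max(h(\Lambda),h(\Gamma))\le\epsilon$, you already have $W_3(I)\le\delta\le\epsilon$; the appeal to part (ii) and $\Lambda_\epsilon$ is superfluous and would only be legitimate if $\epsilon$ happened to be an attainable height (which the minimal $\epsilon$ in $W_1$ always is, since $\mathcal{T}(P_n)$ is finite, so no harm is done --- it is just an extra step).
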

For $I$ convex, set $W(I)=W_1(I)$. This will be our definition of the \emph{width} of $I$. While $W$ is defined algebraically, this definition agrees with Bauer and Lesnick in the case of one-dimensional persistence modules.

\subsection{Induced Matchings} 
\label{subsection induced}
An induced matching is a specific matching produced from an interleaving triangle. The key step in the proof of the isometry theorem of Bauer and Lesnick \cite{induced_matchings} is that an interleaving between two one-dimensional persistence modules produces an induced matching of the same height. Since our work is an algebraic analogue of the ideas of Bauer and Lesnick, this is also a key step in proving the isometry theorem in \cite{meehan_meyer_1}.

We will use the following result on injective and surjective morphisms between generalized persistence modules.

\begin{propnonum}(Theorem 4.2 \cite{induced_matchings}, Proposition 25 \cite{meehan_meyer_1})
\label{inj_surj}
Let $P=P_n$ be totally ordered and let $({\big\{a_i\big\}}_{i=1}^n,b)$ be weights.  Let $A = {\bigoplus}_iA_i, C = \bigoplus_jC_j$ be $A(P_n)$-modules.  For any module $M$, let $B(M)$ denote the barcode of $M$ viewed as a multiset, and let $\Lambda \in \mathcal{T}(P)$.  
Then,
\begin{enumerate}[(i)]
\item If $A \xhookrightarrow{f} C$ is an injection, then for all $d \in P$, the set 
\begin{eqnarray*}
&&|\{i: [-,d] \textrm{ is a maximal totally ordered subset of Supp}(A_i)\}| \leq\\
&& |\{j: [-,d] \textrm{ is a maximal totally ordered subset of Supp}(C_j)\}|, \textrm{ and }
 \end{eqnarray*}
\item If $A \xrightarrow{g} C$ is a surjection, then for all $b \in P$,
\begin{eqnarray*}
&&|\{j:[b,-] \textrm{ is a maximal totally ordered subset of Supp}(C_j) \}| \leq\\
&&|\{i:[b,-] \textrm{ is a maximal totally ordered subset of Supp}(A_i) \}|.
\end{eqnarray*}
\item If $A$ and $C$ are $(\Lambda,\Lambda)$-interleaved, and $A \xrightarrow{\phi} C\Lambda$ is one of the homomorphisms, then for all $I$ in $B(ker(\phi))$, $W(I) \leq h(\Lambda)$.
\item  If $A$ and $C$ are $(\Lambda,\Lambda)$-interleaved, and $A \xrightarrow{\phi} C\Lambda$ is one of the homomorphisms, then for all $J$ in $B(cok(\phi))$, $W(J) \leq h(\Lambda)$.
\end{enumerate}
\end{propnonum}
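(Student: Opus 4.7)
The four claims split into two pairs of different flavor. Parts (i) and (ii) are statements about counting interval summands by their endpoints and follow from exactness; parts (iii) and (iv) bound the width of summands of kernels and cokernels of an interleaving morphism.

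For (i), the plan is to observe that since every indecomposable $A(P_n)$-module is convex, the number of summands of $M$ whose support ends at $d$ equals $\dim \ker(M(d \leq d^+))$, where $d^+$ is the successor of $d$ in $P_n^+$. Indeed, a convex module with support $[x,y]$ contributes $1$ to this kernel precisely when $y = d$. Given an injection $f : A \hookrightarrow C$, the induced map on kernels $\ker(A(d \leq d^+)) \to \ker(C(d \leq d^+))$ is itself injective (being a restriction of the injective $f_d$), which yields the inequality. Part (ii) is dual: the number of summands with minimal support element $b$ equals $\dim \coker(M(b^- \leq b))$ (with the convention $M(b^-) = 0$ if $b$ is minimal in $P_n$), and a surjection $A \twoheadrightarrow C$ induces a surjection on these cokernels.

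For (iii), I would exploit the interleaving relation $(\psi\Lambda) \circ \phi = \eta_A$, where $\eta_A : A \to A\Lambda^2$ is the natural map with components $A(p \leq \Lambda^2 p)$. The inclusion $\iota : \ker(\phi) \hookrightarrow A$ satisfies $\phi \iota = 0$, so $\eta_A \circ \iota = (\psi\Lambda) \circ \phi \circ \iota = 0$. Because $(-)\Lambda^2$ is exact (it is pointwise evaluation at shifted indices), $\iota\Lambda^2 : \ker(\phi)\Lambda^2 \hookrightarrow A\Lambda^2$ remains injective, and naturality of $\eta$ gives a commutative square forcing $\eta_{\ker(\phi)} = 0$. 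Since the natural transformation is compatible with direct sums, $\eta_I = 0$ for every summand $I$ of $\ker(\phi)$. Part (iv) follows by a dual chase using $(\phi\Lambda) \circ \psi = \eta_C$: the image of $\eta_{C\Lambda}$ lies inside $\Ima(\phi)\Lambda^2 \subseteq C\Lambda^3$, so the composition $C\Lambda \to C\Lambda^3 \to \coker(\phi)\Lambda^2$ vanishes, and surjectivity of $C\Lambda \twoheadrightarrow \coker(\phi)$ then gives $\eta_{\coker(\phi)} = 0$.

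The final step, and the main piece of bookkeeping, is converting $\eta_I = 0$ into $\Hom(I, I\Lambda^2) = 0$. Writing $I \sim [x, X]$, the map $\eta_I$ vanishes if and only if $\eta_{I,x} = 0$ (since $\Lambda^2$ is weakly increasing on $P$, the component at any $p \in [x,X]$ vanishes once the component at $x$ does), if and only if $\Lambda^2 x > X$. By Lemma \ref{hom1}, $I\Lambda^2$ is either zero (so $\Hom$ vanishes trivially) or convex with support $[x', X']$, where $X' = \max\{p : \Lambda^2 p \leq X\}$. The inequality $\Lambda^2 x > X$ then forces $X' < x$, which violates the condition $x \leq X'$ required by Lemma \ref{hominterval}, so $\Hom(I, I\Lambda^2) = 0$. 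Taking $\Lambda_1 = \Gamma_1 = \Lambda$ in the $W_1$ formulation of Lemma \ref{W} yields $W(I) \leq h(\Lambda)$. The analogous conclusion for summands of $\coker(\phi)$ follows by the identical calculation, so (iii) and (iv) are simultaneously established.
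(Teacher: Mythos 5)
The paper itself does not prove this proposition: it simply cites Theorem~4.2 of Bauer and Lesnick and Proposition~25 of the authors' earlier paper, remarking only that ``this was proven by Bauer and Lesnick and modified to the current context.''  Your blind reconstruction is correct and recovers the standard argument behind those citations. For (i)--(ii) you identify the count of interval summands with upper endpoint $d$ (resp.\ lower endpoint $b$) with $\dim\ker M(d\le d^+)$ (resp.\ $\dim\operatorname{Coker} M(b^-\le b)$), and observe these are monotone under injections and surjections; this is precisely the structure-theoretic mechanism in Bauer--Lesnick. For (iii)--(iv) you use the interleaving identities to force the natural transformation $\eta_M\colon M\to M\Lambda^2$ to vanish on $\ker\phi$ and $\operatorname{Coker}\phi$, then reduce, for a convex $I\sim[x,X]$, the condition $\eta_I=0$ to $\Lambda^2x>X$, which via the homomorphism criterion and Lemma~W's $W_1$-characterization gives $\Hom(I,I\Lambda^2)=0$ and hence $W(I)\le h(\Lambda)$. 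All steps check out; I verified in particular the slightly compressed dual chase in (iv), namely that $\eta_{C\Lambda}=\eta_C\Lambda=(\phi\Lambda^2)\circ(\psi\Lambda)$ factors through $\Ima(\phi)\Lambda^2$, whence $q\Lambda^2\circ\eta_{C\Lambda}=0$ for $q\colon C\Lambda\twoheadrightarrow\operatorname{Coker}\phi$, and surjectivity of $q$ plus naturality forces $\eta_{\operatorname{Coker}\phi}=0$. A one-line justification that $\eta_I=0$ for $I$ convex is actually \emph{equivalent} to $\Hom(I,I\Lambda^2)=0$ (not merely implied by it) would make the reduction airtight, but the equivalence does hold since $\eta_{I,x}$ is the identity whenever $\Hom(I,I\Lambda^2)\ne 0$.
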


This was proven by Bauer and Lesnick and modified to the current context in \cite{meehan_meyer_1}. The proposition below is also in the flavor of the work of Bauer and Lesnick \cite{induced_matchings}. This is used to construct canonical injections and surjections.

\begin{propnonum}(Proposition 29 \cite{meehan_meyer_1})
\label{trim}
Let $P_n$ be totally ordered and let $({\big\{a_i\big\}}_{i=1}^n,b)$ be weights.    Let $(\phi, \psi)$ be a $(\Lambda, \Lambda)$-interleaving betweem $I$ and $M$.  Say $\phi :I\rightarrow M\Lambda$.  Then, 
\begin{enumerate}
\item[i.] $I^{-\Lambda^2}$ is a quotient of both $I$ and $im(\phi)$, and
\item[ii.]  $M^{+\Lambda^2}\Lambda$ is a submodule of both $M\Lambda$ and $im(\phi)$.
\end{enumerate}
\end{propnonum}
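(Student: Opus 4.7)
The plan is to identify $I^{-\Lambda^2}$ and $M^{+\Lambda^2}\Lambda$ with the images of the two structure maps $\operatorname{str}_I \colon I \to I\Lambda^2$ and $\operatorname{str}_M \colon M \to M\Lambda^2$, and then to chase the two commuting triangles witnessing the $(\Lambda,\Lambda)$-interleaving. Those triangles give the identifications $\psi\Lambda \circ \phi = \operatorname{str}_I$ and $\phi\Lambda \circ \psi = \operatorname{str}_M$, so each structure map is already witnessed as a composition that factors through the opposite module. The main subtlety is purely notational: $I^{-\Lambda^2}$ must be read as the image of the forward structure map, regarded as a quotient of the source $I$, while $M^{+\Lambda^2}\Lambda$ must be read as the submodule of $M\Lambda$ whose value at a vertex $p$ is the image of the map $M(\Lambda^{-1}p)\to M(\Lambda p)$ (with the convention that this is zero when $\Lambda^{-1}p$ does not exist in $P_n$). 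Once these identifications are pinned down, both parts reduce to short diagram chases.

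For part (i), $I^{-\Lambda^2}$ is by construction a quotient of $I$, since it is the image of $\operatorname{str}_I$. Using the factorization $\operatorname{str}_I = \psi\Lambda \circ \phi$ together with the fact that $\phi$ surjects onto its image, the image of the composition coincides with the image of the restriction $(\psi\Lambda)|_{\operatorname{im}(\phi)}$. This restriction therefore produces a surjection $\operatorname{im}(\phi) \twoheadrightarrow I^{-\Lambda^2}$, exhibiting $I^{-\Lambda^2}$ simultaneously as a quotient of $\operatorname{im}(\phi)$.

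For part (ii), $M^{+\Lambda^2}\Lambda \subseteq M\Lambda$ holds by construction. To establish containment in $\operatorname{im}(\phi)$, fix $p$ and apply the factorization $\operatorname{str}_M = \phi\Lambda \circ \psi$ at the vertex $\Lambda^{-1}p$: the structure map $M(\Lambda^{-1}p) \to M(\Lambda p)$ factors as
\[
M(\Lambda^{-1}p) \xrightarrow{\psi(\Lambda^{-1}p)} I(p) \xrightarrow{\phi(p)} M(\Lambda p),
\]
so its image is contained in $\operatorname{im}(\phi(p)) = \operatorname{im}(\phi)(p)$. Taking this over all $p$ produces the desired inclusion $M^{+\Lambda^2}\Lambda \subseteq \operatorname{im}(\phi)$, finishing the argument. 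The hardest thing in the proof is really just ensuring the two superscript decorations are interpreted consistently with the ambient modules in the statement; after that, everything follows directly from the defining commutative triangles of the interleaving.
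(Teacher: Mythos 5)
Your high-level approach—factoring the two structure maps through $\operatorname{im}(\phi)$ using the interleaving triangles $\psi\Lambda\circ\phi = \operatorname{str}_I$ and $\phi\Lambda\circ\psi=\operatorname{str}_M$—is the right one, and part (i) is fine: since $\operatorname{str}_I$ factors as $\psi\Lambda\circ q_\phi$ after passing through $\operatorname{im}(\phi)$, one has $\ker(\phi)\subseteq\ker(\operatorname{str}_I)$, so $I^{-\Lambda^2}=I/\ker(\operatorname{str}_I)$ is a quotient of $\operatorname{im}(\phi)=I/\ker(\phi)$ as well as of $I$.

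Part (ii) has a real gap, and it is hiding in the $\Lambda^{-1}$ notation. A translation $\Lambda\in\mathcal{T}(P_n)$ on a finite poset (suspended at $\infty$) is almost never injective or surjective—the maximal element is always a fixed point, and vertices collapse together generically. Consequently your displayed factorization
\[
M(\Lambda^{-1}p)\xrightarrow{\psi(\Lambda^{-1}p)} I(p)\xrightarrow{\phi(p)} M(\Lambda p)
\]
does not type-check as written unless $p$ is a $\Lambda$-value of a unique preimage: $\psi(q)$ is a map $M(q)\to I(\Lambda q)$, so the middle object should be $I(\Lambda\Lambda^{-1}p)$, and you must still bridge $I(\Lambda\Lambda^{-1}p)\to I(p)$ via a structure map before applying $\phi(p)$; that step in turn requires the inequality $\Lambda q\leq p$, not merely $\Lambda^2 q\leq\Lambda p$, and these are not equivalent when $\Lambda$ collapses vertices. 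More seriously, your convention that $(M^{+\Lambda^2}\Lambda)(p)=0$ whenever $\Lambda^{-1}p$ does not exist produces a strictly smaller submodule than $(M^{+\Lambda^2})\Lambda$ at vertices $p$ lying outside $\operatorname{Im}\Lambda$, so what you establish is a containment of a smaller object, not of $M^{+\Lambda^2}\Lambda$ itself. To repair this one must instead fix $p$, set $q=\max\{r:\Lambda r\leq p\}$, apply the triangle identity $\operatorname{str}_M(q)=\phi(\Lambda q)\circ\psi(q)$ to get into $M(\Lambda^2 q)$, and then use naturality of $\phi$ with respect to the relation $\Lambda q\leq p$ to push forward into $\operatorname{im}(\phi(p))$; the gap between the $q$ this produces and the lower endpoint $\Lambda^2 z$ of $M^{+\Lambda^2}$ at a collapsed vertex is exactly the non-injectivity you need to control, and your write-up silently assumes it away.
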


What follows is an adaptation of section 4 of \cite{induced_matchings}.  For any set $\Sigma'$, an \emph{enumeration} of $\Sigma'$ is a total ordering on $\Sigma'$. When $S$ is a multisubset of $\Sigma'$, an enumeration of $S$ is an total ordering on $S$ that is consistent with the enumeration of its underlying set $\Sigma'$. That is to say, for any $s\in \Sigma'$, the multiset $\{s\}\subseteq S$ is a segment in the total ordering on $S$ and the segments are arranged relative to each other according to the ordering on $\Sigma'$. Finally, for two enumerated multisubsets $S=\{s_1,\ldots,s_m\}$ and $T=\{t_1,\ldots,t_n\}$ with $m\leq n$, the \emph{canonical injection} from $S$ to $T$ is the one that sends $s_j\to t_j$ for all $1 \leq j \leq m$.

For any multisubset (that is not simply a subset), there are multiple enumerations. We will sometimes find it convenient to choose a particular enumeration (see for example, the proof of Theorem \ref{shift_isometry_theorem}). This was unnecessary in \cite{induced_matchings}.

We first define the induced matching for a surjective morphism $q:M\to N$, with $M,N\in A(P_n)$-mod. Let $\Sigma$ be the set of isomorphism classes of indecomposable $A(P_n)$-modules.
For every $x\in P_n$, define the set $\Sigma_{[x,-]}=\{\sigma\in\Sigma:\sigma\sim[x,y],\text{ for some }y\in P,\,y\geq x\}\subseteq\Sigma$.
Enumerate each of the sets $\Sigma_{[x,-]}$ by reverse inclusion. Now we enumerate both
$$M_{[x,-]}=\{\sigma\in B(M)\cap\Sigma_{[x,-]}\}\,\text{ and }\,N_{[x,-]}=\{\sigma\in B(N)\cap\Sigma_{[x,-]}\},$$
as multisubsets of $\Sigma'=\Sigma_{[x,-]}$. By the proposition above (Theorem 4.2 \cite{induced_matchings}, Proposition 25 \cite{meehan_meyer_1} ), as $q$ is a surjection, $|M_{[x,-]}|\geq|N_{[x,-]}|$ for all $x\in P$. Let $\Theta(q)_x$ denote the canonical injection $N_{[x,-]}\to M_{[x,-]}$ for each $x\in P$.  Clearly, the collection $\{\Sigma_{[x,-]}\}$ partition $\Sigma$ by lower endpoints.  That is, 
$$\Sigma=\bigcupdot_{x\in P}\Sigma_{[x,-]},$$ 
and so $\{M_{[x,-]}\}$ and $\{N_{[x,-]}\}$ partition the barcodes $B(M)$ and $B(N)$ respectively. Thus, we union to get the induced matching of $q$, $$\Theta(q)=\bigcupdot_{x\in P}\Theta(q)_x:B(N)\to B(M).$$


We similarly define the induced matching for an injective morphism $i:M\to N$. Here, we use that the set $\Sigma$ is also partitioned by upper endpoints.  So
$$\Sigma=\bigcupdot_{y\in P}\Sigma_{[-,y]}=\bigcupdot_{y\in P}\{\sigma\in\Sigma:\sigma\sim[x,y],\text{ for some }x\in P,\,y\geq x\}.$$ We enumerate $M_{[-,y]},N_{[-,y]}$ as multisubsets of $\Sigma'=\Sigma_{[-,y]}$. By the proposition above, $|M_{[-,y]}|\leq|N_{[-,y]}|$ for all $y\in P$ as $i$ is an injection. Let $\Theta(i)^y$ denote the canonical injection $M_{[-,y]}\to N_{[-,y]}$, and take the disjoint union to get the induced matching of $i$, $$\Theta(i)=\bigcupdot_{y\in P}\Theta(i)^y:B(M)\to B(N).$$

Now we define the induced matching given by an interleaving triangle, as originally described by Bauer and Lesnick. Suppose $I,M$ are $(\Lambda,\Lambda)$-interleaved by $\phi,\psi$ where $\phi:I\to M\Lambda$. The morphism $\phi$ factors through its image as an injection after a surjection
$$I \xrightarrow{q_{\phi}}im(\phi) \xrightarrow{i_{\phi}} M\Lambda. $$
It is clear that the composition of induced matchings $\Theta(i_{\phi})\circ\Theta^{-1}(q_{\phi})$ is a matching $B(I)\to B(M\Lambda)$.  Since our goal is a matching between $B(I)$ and $B(M)$, we include an additional step. Note that it may be the case that $\textrm{Hom}(M,M\Lambda)$ is zero. Thus, we define a matching that does not rely on any morphism. Let $B(M,M\Lambda):B(M\Lambda)\to B(M)$ be the function which sends each $M_t\Lambda$ to $M_t$ for all $t$ such that $M_t\Lambda\neq 0$.


Then, we define \emph{the induced matching $B(I)\to B(M)$ given by the triangle starting at $I$} to be the composition 

\begin{center}
\includegraphics[scale=1]{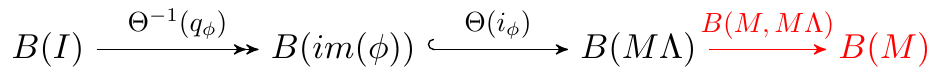}.
\end{center}

In \cite{induced_matchings}, this final step is unnecessary, as $B(M,M\Lambda)$ is a bijection of barcodes that preserves $W$. Note that while strictly speaking an induced matching depends on the enumerations for $B(I)$ and $B(M)$, different choices will produce matchings which agree at the level of isomorphism classes of indecomposables.

\section{Restriction and Inflation}

In this section we discuss restricting and inflating the module category for $A(P)$. We first make a preliminary observation.  It is easy to see that for every $m \in \mathbb{R}$ and for every $n$, there is a one-to-one correspondence between the sets
$$\{(P_n,\{a_i\}, b): (\{a_i\},b) \textrm{ are weights for }P_n\}  \xleftrightarrow{T_m} \{(X,b): X\subseteq \mathbb{R}, |X| = n \textrm{, min}(X)=m\} $$

Specifically, $T_m$ sends the tuple
$$\displaystyle\big(P_n,\{a_i\},b\big)  \xrightarrow{T_m} \big(\{m, m+a_1, m+ a_1 +a_2, ... m + \sum\limits_{i=1}^{n-1} a_i \},b\big)$$

Clearly, this assignment is invertible.  That is to say, once a left endpoint is fixed, the triple $(P_n, \{a_i\},b)$ conveys the same information as the set $\{ m + \sum\limits_{i=1}^{k} a_i\}$ plus the choice of $b$.  This is useful, as we may assume that our poset is given by the order type of the finite subset $X = \{x_1 < x_2 < ... < x_n\}$ with weights given by $a_i = x_{i+1}-x_i$ for $i \leq n-1$ and $b$.  Of course, the latter has a physical interpretation in context of the real line.  In what follows, the points in $X$ will include the jump discontinuities of the Vietoris-Rips complex of a data set. When a generalized persistence module comes from the Vietoris-Rips complex of a data set and $X$ is a superset of the jump discontinuities of the complex, we will say that ${T_m}^{-1}(X)$ is the \emph{natural choice of weights on the corresponding $P_n$}, for $n = |X|$. When this is the case, we will write $P_X$ for $P_n$ (with this choice of weights).

\label{section rest}
\begin{definition}
\label{delta X}
Let $P$ be any poset, and let $X \subseteq P$.  Suppose $I$ is a generalized persistence module for $P$ with values in the category $\mathcal{D}$.  Let $I^X$ be defined by the formulae;
\begin{itemize}
\item $I^X(x) = I(x) \in \mathcal{D}\textrm{, for all } x \in X\textrm{ , and }$
\item $I^X(x_1 \leq x_2) = I(x_1 \leq x_2) \in {\textrm{Hom}}_{\mathcal{D}}(I(x_1),I(x_2))\textrm{ for all }x_1 \leq x_2, x_1, x_2 \in X.$
\end{itemize}
\end{definition}

Then $I^X$ is a generalized persistence module for $X$ (with restricted ordering) with values in the category $\mathcal{D}$.  Moreover, it is clear that by restricting morphsims between generalized persistence modules in the obvious way we obtain a functor from ${\mathcal{D}}^P \to {\mathcal{D}}^X$.  Of particular interest will be the case when $P = \mathbb{R}$, and $X \subseteq P$ is a finite subset. When this is the case, we write $\delta^X$ for the functor
$$\delta^X: {\mathcal{D}}^{\mathbb{R}} \to {\mathcal{D}}^X .$$

We now discuss from \emph{inflating} from $A(P_X)$-mod to $A(P_Y)$-mod, when $X, Y $ are finite subsets of $\mathbb{R}$ with $X \subseteq Y$.  First, we work with translations.
\begin{definition}
\label{lambda bar}
Let $X, Y$ be finite subsets of $\mathbb{R}$, with $X \subseteq Y$.  Let $\Lambda \in \mathcal{T}(P_X)$.  Let $\bar{\Lambda}$ be given by\\
$\bar{\Lambda}(y) = \begin{cases} \textrm{max}\big\{y, \hspace{.07 in}\textrm{max}\{\Lambda x : x \in X, x \leq y\}\big\} \textrm{, if there exists }x \in X, x \leq y\\
y \textrm{ , otherwise.}
\end{cases}$
\end{definition}
Of course there is an assignment $\Lambda \to \bar{\Lambda}$ on $\mathcal{T}(P)$ for every $X, Y$ with $X \subseteq Y$.  When the context is clear, we supress the arguments $X, Y$.  The following lemma shows that $\bar{\Lambda}$ is a translation on $P_Y$ of the same height as $\Lambda$.
\begin{lemma}
\label{lambda bar}
Let $X, Y$ be finite subsets of $\mathbb{R}$, with $X \subseteq Y$.  If $\Lambda \in \mathcal{T}(P_x)$, then $\bar{\Lambda} \in \mathcal{T}(P_Y)$ and $h(\bar{\Lambda}) = h(\Lambda)$.  Moreover, ${\bar{\Lambda}}_{|X} = \Lambda$.
\end{lemma}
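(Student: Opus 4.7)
The plan is to verify each of the three assertions in turn: membership in $\mathcal{T}(P_Y)$, equality of heights, and the restriction identity. First I would observe that $\bar{\Lambda}(y)\geq y$ for every $y \in Y$ is immediate from the definition, since in both branches $\bar{\Lambda}(y)$ is either $y$ itself or a maximum that includes $y$. One would also extend the definition by declaring $\bar{\Lambda}(\infty)=\infty$ to handle the suspension point of $P_Y^+$ consistently.

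Next I would check order-preservation. Suppose $y_1\leq y_2$ in $Y$. If no element of $X$ lies below $y_1$, then $\bar{\Lambda}(y_1)=y_1\leq y_2\leq\bar{\Lambda}(y_2)$. Otherwise, every $x\in X$ with $x\leq y_1$ also satisfies $x\leq y_2$, so the inclusion $\{x\in X:x\leq y_1\}\subseteq\{x\in X:x\leq y_2\}$ gives $\max\{\Lambda x:x\in X,\,x\leq y_1\}\leq\max\{\Lambda x:x\in X,\,x\leq y_2\}$, and combined with $y_1\leq y_2$ the outer maxima satisfy $\bar{\Lambda}(y_1)\leq\bar{\Lambda}(y_2)$.

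For the restriction identity $\bar{\Lambda}_{|X}=\Lambda$, fix $x_0\in X$. Then $x_0$ itself lies in $\{x\in X:x\leq x_0\}$, and by order-preservation of $\Lambda$ on $X$, the maximum $\max\{\Lambda x:x\in X,\,x\leq x_0\}$ is achieved at $x=x_0$ and equals $\Lambda x_0$. Since $\Lambda x_0\geq x_0$, the outer maximum with $x_0$ collapses to $\Lambda x_0$, yielding $\bar{\Lambda}(x_0)=\Lambda x_0$. This inclusion-of-heights argument then gives $h(\bar{\Lambda})\geq h(\Lambda)$ for free, since the supremum defining $h(\bar{\Lambda})$ is taken over a larger set that contains all of $X$.

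The reverse inequality $h(\bar{\Lambda})\leq h(\Lambda)$ is the only step requiring a small case analysis, and I view it as the key technical point. Fix $y\in Y$. If no element of $X$ sits below $y$, then $\bar{\Lambda}(y)=y$ and $d(y,\bar{\Lambda}(y))=0$. Otherwise, let $x^*=\max\{x\in X:x\leq y\}$; by order-preservation of $\Lambda$ the inner maximum equals $\Lambda x^*$, so $\bar{\Lambda}(y)=\max(y,\Lambda x^*)$. When $\Lambda x^*\leq y$ the displacement is zero; when $\Lambda x^*>y$ we have $x^*\leq y<\Lambda x^*$, hence
\[
d(y,\bar{\Lambda}(y))=\Lambda x^*-y\leq \Lambda x^*-x^*=d(x^*,\Lambda x^*)\leq h(\Lambda).
\]
Taking the supremum over $y\in Y$ gives $h(\bar{\Lambda})\leq h(\Lambda)$, completing the equality of heights and hence the lemma.
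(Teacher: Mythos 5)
Your proof is correct and follows the same overall approach as the paper: verify $\bar{\Lambda}y\geq y$, verify order-preservation, verify the restriction identity, and conclude the height equality. The one place you go further is the height claim — the paper simply asserts ``its height is attained on $X$'' without justification, whereas you actually prove $d(y,\bar{\Lambda}y)\leq\Lambda x^*-x^*\leq h(\Lambda)$ for $y\notin X$; your inclusion-of-sets argument for monotonicity is also a cleaner route than the paper's explicit case split on $t_1\in X$, $t_2\notin X$, etc.
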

\begin{proof}
First, by inspection, for all $y \in Y$, $\bar{\Lambda}y \geq y$.  It is also easy to see that $\bar{\Lambda}$ restricts to $\Lambda$ on $X$ as a function.  Since its height is attained on $X$, $h(\bar{\Lambda}) = h(\Lambda)$.  Now, let $t_1, t_2 \in Y$, with $t_1 \leq t_2$.  We must show that $\bar{\Lambda}t_1 \leq \bar{\Lambda} t_2$.  Note that 
$$\textrm{max}\{\Lambda x : x \in X, x \leq y\} = \Lambda x_y \textrm{, where }x_y = \textrm{max}\{x \in X, x \leq y\}.$$

When they exist, let $x_1, x_2$ be maximal elements of $X$ less than or equal to $t_1, t_2$ respectively.  

First, suppose $t_1 \in X$, $t_2 \notin X$.  Then, 
$$t_1 \leq x_2\textrm{, so } \bar{\Lambda}t_1 = \Lambda t_1 \leq \Lambda x_2 = \bar{\Lambda} t_2. $$

On the other hand, say $t_1 \notin X, t_2 \in X$.  If $\bar{\Lambda} t_1 = t_1$, then $\bar{\Lambda}t_1 = t_1 \leq t_2 \leq \Lambda t_2 = \bar{\Lambda}t_2$.  Otherwise, $\bar{\Lambda} t_1 = \Lambda x_1.$  Then, $x_1 \leq t_1 \leq t_2$, so $\Lambda x_1 \leq \Lambda t_2$ and $\bar{\Lambda} t_1 = \Lambda x_1 \leq \Lambda t_2 = \bar{\Lambda} t_2$.  The remaining cases are handled similarly.  
\end{proof}
We now include the category $A(P_X)$-mod inside $A(P_Y)$-mod when $X \subseteq Y$ and $X, Y$ are finite subsets of $\mathbb{R}$.

\begin{definition}
\label{j(X,Y)}
Let $X, Y$ be finite subsets of $\mathbb{R}$, with $X \subseteq Y$.  Let $I \in A(P_X)$-mod.  Define $j(X,Y)I$ by the formulae;
\begin{eqnarray}
\label{vector space}
j(X,Y)I(y) = \begin{cases} I(x_y)\textrm{, }x_y \textrm{ maximal in }X, x_y \leq y\textrm{, or}\\ 0 \textrm{, if no such }x_y \textrm{ exists.} \end{cases}
\end{eqnarray}

\begin{eqnarray}
\label{linear map}
j(X,Y)I(y_1 \leq y_2) = \begin{cases} I(x_1 \leq x_2) \textrm{, where }x_i \textrm{ is maximal in }X, x_i \leq y_i\textrm{, or}\\0 \textrm{, if either of the above do not exist.}\end{cases}
\end{eqnarray}
\end{definition}

It is clear that Equations (\ref{vector space}), (\ref{linear map}) define a module for the algebra $A(P_Y)$.  Note that if $I$ is a convex module for $P_X$, $I \sim [a,b]$, then $j(X,Y)I\sim [a, b_y]$, where $b_y$ is maximal in $Y \cap [b, b^{+1})$, where $b^{+1}$ is the successor of $b$ in $X$.  That is to say, the right endpoint of the support of $I$ may move to the right.  We now discuss how morphisms in $A(P_X)$-mod can be extended to the image of $j(X,Y)$.

\begin{definition}
\label{upper Y}
Let $X, Y$ be finite subsets of $\mathbb{R}$, with $X \subseteq Y$.  Let $I, M$ be modules for $A(P_X)$, and let $\alpha \in \textrm{Hom}(I,M)$.  Let $j(X,Y)\alpha$ be defined by the formula
$$j(X,Y)\alpha(y) = \begin{cases} \alpha(x_y) \textrm{, where }x_y \textrm{ is maximal with }x_y \in X, x_y \leq y\textrm{, or }\\ \textrm{the zero homomorphism, if no such element exists.}\end{cases}$$
\end{definition}

Then, $j(X,Y)\alpha$ defines an $A(P_Y)$-module homomorphism from $j(X,Y)I$ to $j(X,Y)M$.  The proof follows from the commutativity of the diagrams below.

\begin{center}
\includegraphics[scale=1.25]{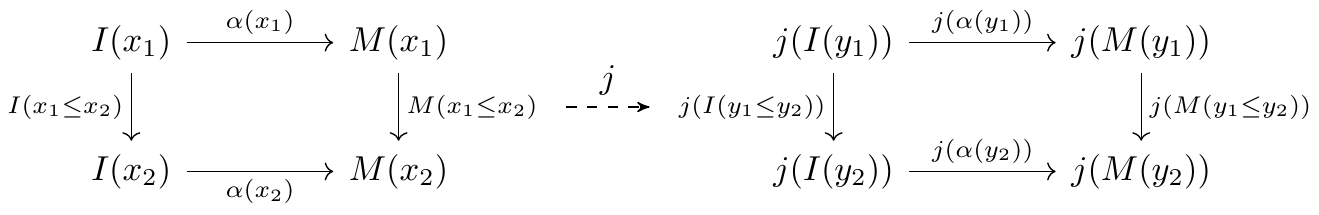}
\end{center}

\medskip
We are  now ready to compare different one-dimensional persistence modules.  Let $D_1, D_2, ... , D_n$ be finite data sets in some metric space.  Let $L_i$ be the set of jump discontinuities of the Vietoris-Rips complex of $D_i$, and let $L = L_1 \cup L_2 \cup ... \cup L_n$.  Let $\Delta(D_1,D_2, ... , D_n)$ be the collection of all finite supersets of $L$.  Clearly $L$ is a directed set under the partial ordering given by containment.  Note that for all $i$ the one-dimensional persistence modules coming from the data set $D_i$ admits the structure of an $A(P_X)$-module for any $X$ in $\Delta(D_1,D_2, ... , D_n)$.  Thus, we may compare discretized persistence modules which are a priori modules for \emph{different} poset algebras.  Since clearly any finite set of one-dimensional persistence modules can be compared in this way, from this point forward we write $\Delta(D)$ for $\Delta(D_1,D_2, ... , D_n)$.  
\begin{prop}
\label{functor}
Let $I$ be a one-dimensional persistence module that comes from data. Say $D_1,\ldots,D_n$ are such that the jump discontinuities of the Vietoris-Rips complex of $I$ are contained in the corresponding set $L$. Let $X, Y \in \Delta(D)$, with $X \subseteq Y$.  Then, $j(X,Y)$ is a fully-faithful functor from $A(P_X)$-mod to its image in $A(P_Y)$-mod.  Moreover, $j(X,Y)$ commutes with $\delta^X, \delta^Y$ in the sense that $(j(X,Y) \circ \delta^X)I = \delta^YI$.  
\end{prop}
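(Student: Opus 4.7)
The plan is to verify three separate claims: (a) $j(X,Y)$ defines a functor; (b) that functor is fully faithful onto its image; and (c) $j(X,Y)\circ\delta^X = \delta^Y$ on persistence modules arising from data. Claim (a) is largely handled in the excerpt following Definition \ref{upper Y}: the commuting squares there already show $j(X,Y)\alpha$ is an $A(P_Y)$-module homomorphism. Preservation of identities and composition under $\alpha \mapsto j(X,Y)\alpha$ is a pointwise statement at each $y \in Y$ that reduces to evaluation at the single point $x_y \in X$, and so is immediate.

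For (b), faithfulness is instantaneous: since $x_x = x$ for every $x \in X$, the restriction $j(X,Y)\alpha|_X$ equals $\alpha$, so the Hom-map is injective. For fullness, given $\gamma : j(X,Y)I \to j(X,Y)M$, I will define $\alpha(x) := \gamma(x)$ for $x \in X$. The naturality squares for $\gamma$ along any arrow $x \leq x'$ of $P_X \subseteq P_Y$ are precisely those required of $\alpha$ as an $A(P_X)$-module map, so $\alpha$ is a bona fide morphism. To see $j(X,Y)\alpha = \gamma$, I will show $\gamma(y) = \gamma(x_y)$ for each $y \in Y$: the commuting square for $\gamma$ along $x_y \leq y$ forces this, because both $j(X,Y)I(x_y \leq y)$ and $j(X,Y)M(x_y \leq y)$ are identity maps by the construction in Definition \ref{j(X,Y)}. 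Then $j(X,Y)\alpha(y) = \alpha(x_y) = \gamma(x_y) = \gamma(y)$, as required.

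For (c), the essential input is the hypothesis that $I$ comes from data with jump discontinuities in $L \subseteq X$, so $I$ is constant on each interval between consecutive elements of $X$. For any $y \in Y$, no jump of $I$ lies in $(x_y, y]$, whence $I(x_y) = I(y)$ and $I(x_y \leq y) = \mathrm{Id}$. Consequently, at every $y \in Y$ both $\delta^Y I$ and $j(X,Y)\delta^X I$ have the same vector space $I(x_y)$, and for every $y \leq y'$ in $Y$ both assign the structure map $I(x_y \leq x_{y'})$, so the two modules coincide. The main obstacle (really the only genuine content) is this last step: the identification depends critically on $L \subseteq X$; without it, $I$ could have a jump inside $(x_y, y]$, and the equality would fail.
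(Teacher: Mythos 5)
Your proof is correct, and it departs from the paper's argument in a genuinely useful way on the fully-faithful claim. The paper establishes fully-faithfulness by appealing to the classification of homomorphisms between convex modules for $\mathbb{A}_n$ quivers (Lemma 34 and Lemma 40 of \cite{meehan_meyer_1}), together with the observation that $j(X,Y)$ is injective on isomorphism classes; this is a dimension count that leans on structure theory specific to the $1$-Vee case. Your argument instead works directly from Definitions \ref{j(X,Y)} and \ref{upper Y}: faithfulness because $x_y=y$ for $y\in X$ so restriction recovers $\alpha$, and fullness by defining $\alpha:=\gamma|_X$ and then observing that the naturality square for $\gamma$ along $x_y\leq y$ has both horizontal maps equal to the identity (since $x_{x_y}=x_y$ and the maximal element of $X$ below $y$ is again $x_y$), forcing $\gamma(y)=\gamma(x_y)=j(X,Y)\alpha(y)$. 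This is more elementary, is self-contained, and would generalize to posets where the Hom-classification is unavailable; it buys robustness at no cost.

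For the commuting statement your route and the paper's are essentially the same idea. The paper reduces to the case $I$ convex, $I\sim[t,T)$, and checks the vector space at each $y$ using $t,T\in X$, then distributes over direct sums. You argue directly on $I$, using only that $I$ is constant on $(x_y,y]$ because $L\subseteq X$ contains every jump, which gives $I(x_y)=I(y)$, $I(x_y\leq y)=\mathrm{Id}$, and hence $I(y\leq y')=I(x_y\leq x_{y'})$ by the usual two-step factorization through $I(x_y\leq y')$. Both arguments use the same essential input (endpoints of the decomposition lie in $X$); yours avoids the detour through the Krull--Schmidt decomposition. You also correctly identify the hypothesis $L\subseteq X$ as the load-bearing assumption; this is the right emphasis.
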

\begin{proof}
One easily checks that $j(X,Y){(\beta \circ \alpha)} = j(X,Y)\beta \circ j(X,Y)\alpha$.  Now say $X, Y \in \Delta(D)$, where $I$ is as above.  We will show the commutativity of the triangle below.  

\begin{center}

\begin{tikzpicture}[commutative diagrams/every diagram]
	\matrix[matrix of math nodes, name=m, commutative diagrams/every cell,row sep=.7cm,column sep=1cm] {
	 \pgftransformscale{0.2}
		{} & I & {}\\
		\delta^X I & {} & \delta^Y I\\ };
		
		\path[commutative diagrams/.cd, every arrow, every label]
			(m-1-2) edge node [above] {$\delta^X$} (m-2-1)
			(m-1-2) edge node[above,xshift=.15cm] {$\delta^Y$} (m-2-3)
			(m-2-1) edge node[above] {$j(X,Y)$} (m-2-3);
		
\end{tikzpicture}

\end{center}
\noindent
First, say $I$ is convex, with $I \sim [t,T)$.  Let $y \in Y \cap [t,T)$.  Then, since $t \in X \cap Y$, there exists $x \in X$ with $x \leq y$.  Thus, let $x_y$ be maximal in $X$, with $x_y \leq y$. Since $I$ is convex, it is enough to show that $j(X,Y)\delta^XI(y)=\delta^YI(y)=K$. As $y\in[t,T)$, $t\leq x_y\leq y<T$, so $j(X,Y)\delta^XI(y)=\delta^XI(x_y)=I(x_y)=K$. Similarly, $\delta^YI(y)=I(y)=K$ as required, so the result holds for $I$ convex. The general case follows since all the above functors distribute through direct sums.  $J(X,Y)$ is fully faithful by the characterization of homomorphisms between convex modules and since $j(X,Y)$ is one-to-one on isomorphism classes of objects.
\end{proof}
Moreover, the following lemma shows that $j(X,Y)$ is compatible with the assignment $\Lambda \to \bar{\Lambda}$ (for $X, Y$).
\begin{lemma}
\label{translation and extension}
Let $X, Y$ be finite subsets of $\mathbb{R}$, with $X \subseteq Y$.  Let $\Lambda \in \mathcal{T}(P_X)$, and let $I, M$ be $A(P_X)$-modules.  Suppose $\alpha \in \textrm{Hom}(I,M)$.  Then, 
\begin{enumerate}[(i.)]
\item $j(X,Y){(I \Lambda)}= (j(X,Y)I)\bar{\Lambda}$, and 
\item $j(X,Y){(\alpha \Lambda)} = (j(X,Y)\alpha) \bar{\Lambda}$.
\end{enumerate}
\end{lemma}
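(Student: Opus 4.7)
The plan is to verify both equalities pointwise, that is, to evaluate each side at an arbitrary $y \in Y$ (for (i) on objects, also on each relation $y_1 \leq y_2$; for (ii) on components of the natural transformation). Since everything in sight is defined by a case distinction depending on whether the maximal $x \in X$ with $x \leq y$ exists, the proof reduces to book-keeping once the right ``key identity'' is in place.

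The key identity I want to establish is the following: for every $y \in Y$ for which $x_y := \max\{x \in X : x \leq y\}$ exists, one has
\[
x_{\bar\Lambda y} \;:=\; \max\{x \in X : x \leq \bar\Lambda y\} \;=\; \Lambda x_y.
\]
This is proved by splitting into the two cases that occur in Definition \ref{lambda bar}. If $\Lambda x_y \geq y$, then by definition $\bar\Lambda y = \Lambda x_y$, which already lies in $X$, so it is automatically the maximal element of $X$ bounded above by itself. If instead $\Lambda x_y < y$, then $\Lambda x_y$ is an element of $X$ that is $\leq y$, so by the maximality of $x_y$ we get $\Lambda x_y \leq x_y$; combined with $\Lambda x_y \geq x_y$ (because $\Lambda$ is a translation) this forces $\Lambda x_y = x_y$, hence $\bar\Lambda y = \max\{y,\Lambda x_y\} = y$ and $x_{\bar\Lambda y} = x_y = \Lambda x_y$. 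In the remaining case where no $x \leq y$ exists in $X$, Definition \ref{lambda bar} gives $\bar\Lambda y = y$, and both sides of each claimed equality vanish at $y$ directly from Definitions \ref{j(X,Y)} and \ref{upper Y}.

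With the key identity in hand, statement (i) on objects is immediate: at $y \in Y$ with $x_y$ existing, the left-hand side is $j(X,Y)(I\Lambda)(y) = (I\Lambda)(x_y) = I(\Lambda x_y)$, while the right-hand side is $(j(X,Y)I)(\bar\Lambda y) = I(x_{\bar\Lambda y}) = I(\Lambda x_y)$. On a relation $y_1 \leq y_2$ in $Y$, the same identity applied to $x_{y_1}, x_{y_2}$ together with the functoriality of $I\Lambda$ and of $j(X,Y)I$ shows that both induced maps equal $I(\Lambda x_{y_1} \leq \Lambda x_{y_2})$. Statement (ii) follows by the identical recipe with $I$ replaced by $\alpha$: at $y$ with $x_y$ existing, the left-hand side is $\alpha(\Lambda x_y)$ and the right-hand side is $\alpha(x_{\bar\Lambda y}) = \alpha(\Lambda x_y)$.

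The only place any real thought is needed is the second subcase of the key identity, where one has to notice that the \emph{a priori} awkward ``$\bar\Lambda y = y$'' clause in Definition \ref{lambda bar} is in fact compatible with the action of $\Lambda$, because the maximality of $x_y$ collapses $\Lambda x_y$ back onto $x_y$. Once that observation is isolated, the rest of the argument is routine case-checking against Definitions \ref{lambda bar}, \ref{j(X,Y)}, and \ref{upper Y}.
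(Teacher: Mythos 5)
Your proposal is correct and follows essentially the same route as the paper: verify both equalities pointwise at each $y \in Y$, reducing everything to the identity $(j(X,Y)I)(\bar\Lambda y) = I(\Lambda x_y)$. The only difference is one of care rather than strategy---the paper passes directly from $(j(X,Y)I)\bar\Lambda(y)$ to $(j(X,Y)I)\Lambda(z_y)$ without comment, whereas you explicitly isolate and prove the key identity $x_{\bar\Lambda y} = \Lambda x_y$ by splitting into the two cases of Definition \ref{lambda bar}, noting in the second case that maximality of $x_y$ forces $\Lambda x_y = x_y$, which is exactly the observation needed to make the paper's first equality legitimate.
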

\begin{proof}
Let $y\in Y$, $z_y$ the maximal element of $X$ such that $z_y\leq y$.

Then $(j(X,Y)I)\bar{\Lambda}(y)=(j(X,Y)I)\Lambda(z_y)=I(\Lambda(z_y))=(I\Lambda)(z_y)=j(X,Y)(I\Lambda)(y)$.  Similarly, $j(X,Y)\alpha\bar{\Lambda}(y)=j(X,Y)\alpha\Lambda(z_y)=\alpha\Lambda(z_y)=(\alpha\Lambda)(z_y)=j(X,Y)(\alpha\Lambda)(y)$.
\end{proof}

The results in this section are used in the next section where we prove our algebraic stability theorems.

\section{The Shift Isometry Theorem}
\label{section shift}
In this section we show that an interleaving need not produce on induced matching of the same height.  Since the existence of such a matching is the key step in the proof of an isometry theorem, this provides an obstruction to proving an isometry theorem for $A(P_n)$-mod.  We then prove a (shifted) isometry theorem by enlarging the category $A(P_n)$-mod.  We begin with an example illustrating the failure of the "matching theorem."  For a lengthier discussion, see Subsection \ref{section regu}.

\begin{example}
\label{counterexample_1}
Let $P=P_{6} = \{x_1 < ... < x_6\}$ with weights $a_1,a_3,a_4=1$ and $a_2,a_5=2$.  Let $\Lambda=\Lambda_2$ and consider the convex modules with supports depicted below.

\begin{center}
\includegraphics[scale=1]{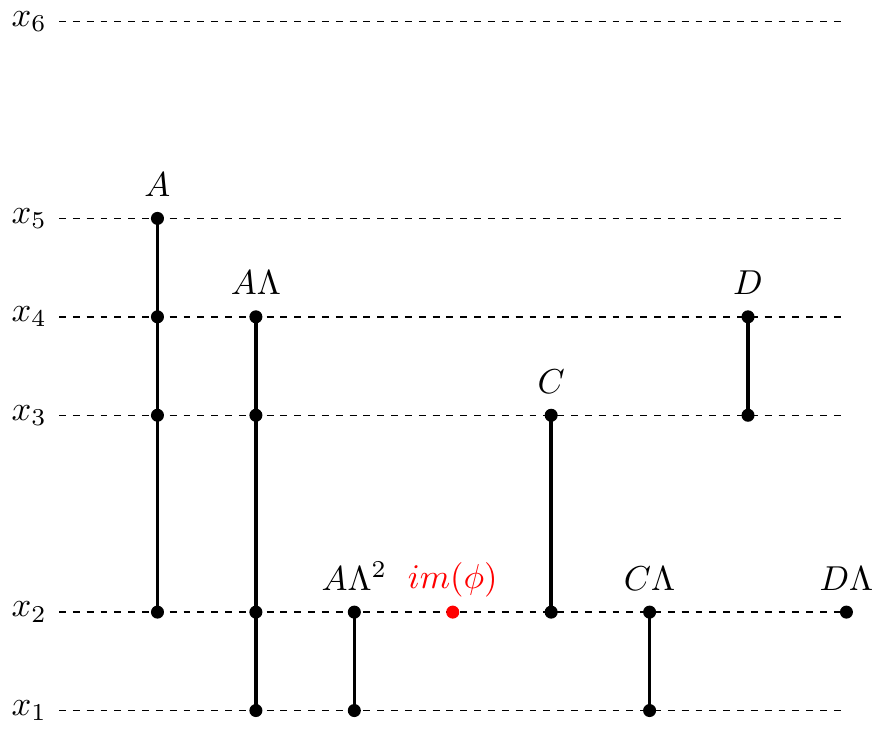}
\end{center}

Let $\phi=\Phi_{A,D\Lambda}$ and $\psi=\Phi_{D,A\Lambda}$. It is immediate that $\phi, \psi$ correspond to a $(\Lambda_2,\Lambda_2)$-interleaving. However, the induced matching pairs $A\updownarrow C$ which are not $(\Lambda_2,\Lambda_2)$-interleaved as $\Phi_{C,A\Lambda}=0$.  Thus, the induced matching corresponding to $\psi$ does generate a matching of the correct height. We can cause both induced matchings to fail by taking $I=A\oplus (C\oplus D)$ and $M=(C\oplus D)\oplus A$ to be $(\Lambda_2,\Lambda_2)$-interleaved by morphisms $\phi'=\phi\oplus\psi$ and $\psi'=\psi\oplus\phi$.

\end{example}

It is important to note that this does not say that the interleaving distance between $A, C\oplus D$ is not the bottleneck distance. In fact, they are the same. This simply says that the only known algorithm for producing a matching with the same height as the interleaving fails in this situation.

\medskip

We now work towards the proof of the shift isometry theorem.  First we show that we enlarge the category, the functor $j$ is a contraction.

\begin{prop}
\label{contraction_prop}
For $X,Y$ finite subsets of $\mathbb{R}$, $X\subseteq Y$, the functor $j(X,Y)$ is a contraction from $A(P_X)$-mod equipped with $D^X$ to its image in $A(P_Y)$-mod equipped with $D^Y$.
\end{prop}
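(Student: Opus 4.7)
The plan is to transport a near-optimal interleaving realizing $D^X(I,M)$ through the functor $j := j(X,Y)$ to obtain an interleaving between $jI$ and $jM$ of the same height, thereby giving $D^Y(jI,jM) \leq D^X(I,M)$. Concretely, I would fix $\epsilon > D^X(I,M)$ and invoke Lemma \ref{T(P)}(iii) to obtain a single translation $\Lambda \in \mathcal{T}(P_X)$ with $h(\Lambda) \leq \epsilon$ together with a $(\Lambda,\Lambda)$-interleaving $(\phi,\psi)$, where $\phi\colon I \to M\Lambda$ and $\psi\colon M \to I\Lambda$. Let $\bar\Lambda \in \mathcal{T}(P_Y)$ be the extension from Definition \ref{lambda bar}; by Lemma \ref{lambda bar}, $h(\bar\Lambda) = h(\Lambda) \leq \epsilon$.

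Next I would apply $j$ to $\phi$ and $\psi$. Lemma \ref{translation and extension} gives $j\phi\colon jI \to j(M\Lambda) = (jM)\bar\Lambda$ and $j\psi\colon jM \to j(I\Lambda) = (jI)\bar\Lambda$, together with the identity $j(\psi\Lambda) = (j\psi)\bar\Lambda$. Functoriality of $j$ (Proposition \ref{functor}) then yields
\[
(j\psi)\bar\Lambda \circ j\phi \;=\; j(\psi\Lambda \circ \phi),
\]
and the interleaving hypothesis identifies the right-hand side with $j$ applied to the structure morphism $I \to I\Lambda^2$, $p \mapsto I(p \leq \Lambda^2 p)$.

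The heart of the argument is then the subclaim that $j$ sends this structure morphism to the structure morphism $jI \to (jI)\bar\Lambda^2$; together with a symmetric argument starting at $M$, this will exhibit $(j\phi, j\psi)$ as a genuine $(\bar\Lambda, \bar\Lambda)$-interleaving of $jI$ and $jM$. Unwinding Definitions \ref{j(X,Y)} and \ref{upper Y}, the subclaim reduces to the pointwise identity $\overline{\Lambda^2} = \bar\Lambda^2$ as elements of $\mathcal{T}(P_Y)$, which I would establish by a short case analysis on $y \in Y$. Letting $x_y$ denote the largest element of $X$ with $x_y \leq y$ (when it exists), either $\Lambda x_y \geq y$, in which case $\bar\Lambda(y) = \Lambda x_y \in X$ and both $\overline{\Lambda^2}(y)$ and $\bar\Lambda^2(y)$ evaluate to $\Lambda^2 x_y$; or $\Lambda x_y < y$, in which case the maximality of $x_y$ combined with $\Lambda x_y \in X$ and $\Lambda x_y \geq x_y$ forces $\Lambda x_y = x_y$, so that both sides equal $y$. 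The degenerate case in which no such $x_y$ exists is trivial since $jI(y) = 0$. I expect this compatibility between extension-to-$Y$ and iteration of translations to be the only delicate step; everything else is bookkeeping via Lemmas \ref{lambda bar} and \ref{translation and extension} together with functoriality of $j$.

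Once the subclaim is in hand, $(j\phi, j\psi)$ is a $(\bar\Lambda, \bar\Lambda)$-interleaving of $jI$ and $jM$ of height at most $\epsilon$, so $D^Y(jI, jM) \leq \epsilon$. Taking the infimum over all admissible $\epsilon > D^X(I,M)$ completes the proof.
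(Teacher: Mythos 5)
Your proposal is correct and follows the paper's route: lift a $(\Lambda,\Lambda)$-interleaving through $j$ using Lemma \ref{translation and extension} together with the height-preserving extension $\Lambda\mapsto\bar\Lambda$ of Lemma \ref{lambda bar}, then take the infimum over admissible heights. Your explicit verification that $\overline{\Lambda^2}=\bar\Lambda^2$ (so $j$ carries the structure morphism $I\to I\Lambda^2$ to the structure morphism $jI\to(jI)\bar\Lambda^2$) spells out a compatibility the paper delegates to a commuting diagram.
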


\begin{proof}
Let $I,M\in A(P_X)$-mod. Suppose $I,M$ are $(\Lambda_\epsilon,\Lambda_\epsilon)$-interleaved in $P_X$. It suffices to show that $j(X,Y)I$ and $j(X,Y)M$ are $(\Gamma,\Gamma)$-interleaved in $P_Y$ with $h(\Gamma)\leq\epsilon$. By Lemma \ref{translation and extension}, $j=j(X,Y)$ gives the following progression of diagrams.

\begin{center}
\includegraphics[scale=1.25]{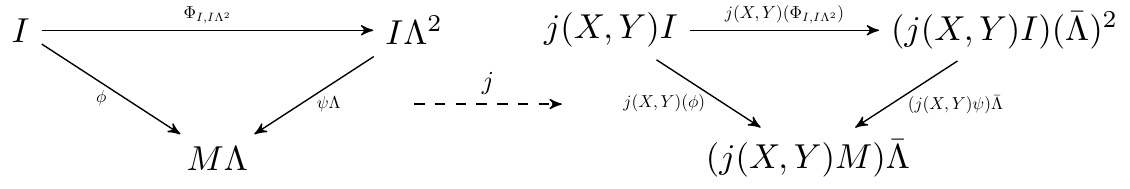}
\end{center}

The result is now obtained by letting $\Gamma=\bar{\Lambda}_\epsilon$ and noting that $h(\bar{\Lambda})=\epsilon$ by Lemma \ref{lambda bar}.

\end{proof}

We will now construct a particular refinement of $X$. This will give rise to the appropriate enlarged module category.  Let $X\subseteq\mathbb{R}$ be finite. Let $Y=X\cup\{x-\epsilon:x\in X,\epsilon\in N_1\}$ where $N_1$ denotes the set of all distances between points in $X$. Order the finite set $Y=\{y_1>y_2>\ldots>y_n\}$ by greatest to least on the real number line.

Let
$$Z_1=Y\cup\{z_{y_1}-\epsilon:\text{ for all }\epsilon\in N_1,\text{ with }z_{y_1}\text{ maximal in }Y\text{ such that }z_{y_1}<y_1\}.$$
Next, let
$$Z_2=Z_1\cup\{z_{y_2}-\epsilon:\text{ for all }\epsilon\in N_1,\text{ with }z_{y_2}\text{ maximal in }Z_1\text{ such that }z_{y_2}<y_2\}.$$
For the $i$-th step in the process, let
$$Z_{i+1}=Z_i\cup\{z_{y_{i+1}}-\epsilon:\text{ for all }\epsilon\in N_1,\text{ with }z_{y_{1+1}}\text{ maximal in }Z_i\text{ such that }z_{y_{i+1}}<y_{i+1}\}.$$
Since $Y$ is finite, the process terminates after $n$ steps. Let $Sh(X)$ be the set $Z_n$.  We will call $Sh(X)$ the \emph{shift refinement} of the set $X$.

\begin{lemma}\label{refinement}
Let $X$ be a finite subset of $\mathbb{R}$. For $q\in Sh(X)$, let $q^{+1},q^{-1}$ denote subsequent and previous elements in $Sh(X)$ respectively, where applicable.  Then, $Sh(X)$ has the property that for every $x\in X$ and $\epsilon\in N_1$, $(x-\epsilon)^{-1}-\epsilon\in Sh(X)$. Equivalently, for every $q\in Y$ and $\epsilon\in N_1$, $q^{-1}-\epsilon$ is in $Sh(X)$.
\end{lemma}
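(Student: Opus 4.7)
The plan is to identify, for each $q = y_i \in Y$, the immediate predecessor $q^{-1}$ of $q$ in $Sh(X)$ with the element $z_{y_i}$ defined when $y_i$ was processed at the $i$-th step of the construction. Once that identification is in hand, the lemma is immediate: at step $i$ the construction explicitly adjoins $z_{y_i} - \epsilon$ to $Z_{i-1}$ for every $\epsilon \in N_1$, so $q^{-1} - \epsilon = z_{y_i} - \epsilon$ lies in $Z_i \subseteq Sh(X)$.

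The heart of the argument is the following invariant: $z_{y_i}$ remains the maximum element of $Z_j$ strictly below $y_i$ for every $j \geq i-1$, not merely for $j = i-1$. Equivalently, no element adjoined at any step of the construction falls inside the open interval $(z_{y_i}, y_i)$. I would verify this by splitting into the contribution of step $i$ itself and the contribution of later steps $j > i$. At step $i$ the newly adjoined elements all have the shape $z_{y_i} - \epsilon < z_{y_i}$, so they trivially cannot land in $(z_{y_i}, y_i)$. For any later step $j > i$, the key sub-observation is that $y_j \leq z_{y_i}$: indeed $y_j \in Y \subseteq Z_{i-1}$, the enumeration of $Y$ is strictly decreasing so $y_j < y_i$, and by the very definition of $z_{y_i}$ as the maximum element of $Z_{i-1}$ strictly less than $y_i$ we conclude $y_j \leq z_{y_i}$. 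Since $z_{y_j} < y_j$, this gives $z_{y_j} - \epsilon < z_{y_j} < y_j \leq z_{y_i}$, so every element added at step $j$ also falls strictly below $z_{y_i}$.

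Once the invariant is established, the predecessor of $q = y_i$ in $Sh(X) = Z_n$ is precisely $z_{y_i}$, and the conclusion follows as in the first paragraph. The \emph{equivalently} reformulation is automatic because each $x - \epsilon$ with $x \in X$ and $\epsilon \in N_1$ is by definition an element of $Y$. The only place where any real care is needed is extracting the strengthening $y_j \leq z_{y_i}$ (rather than the immediate $y_j < y_i$) directly from the defining maximality of $z_{y_i}$; beyond this the argument is structural bookkeeping on the iterative construction of $Sh(X)$.
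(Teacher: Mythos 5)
Your proof is correct and follows essentially the same route as the paper's. The paper's argument simply asserts ``by construction'' that the maximal element of $Z_k$ strictly below $y_i$ stabilizes at $z_{y_i}$ once $k\geq i-1$; you supply the verification of that invariant (new elements at step $i$ lie below $z_{y_i}$ by definition, and new elements at steps $j>i$ lie below $z_{y_j}<y_j\leq z_{y_i}$, the last inequality following from $y_j\in Y\subseteq Z_{i-1}$ and the maximality defining $z_{y_i}$), which is exactly the content the paper leaves implicit.
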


\begin{proof}
Let $q\in Y$, $\epsilon\in N_1$. For any $1\leq i\leq n$ and $k\geq i-1$, by construction the maximal element of $Z_k$ strictly less than $y_i$ is in fact precisely $y_i^{-1}\in Sh(X)$. Hence, $q=y_i$ for some $1\leq i\leq n$, and so $q^{-1}-\epsilon\in Sh(X)$.
\end{proof}

We are now ready to prove Theorem \ref{shift_isometry_theorem}.

\begin{proof}[Proof of Theorem \ref{shift_isometry_theorem}] Let $X\subseteq\mathbb{R}$ be finite and $Sh(X)$ be its shift refinement. Let $\mathcal{C}$ be the subcategory given by $im(j(X,Sh(X)))$.  First, by Proposition \ref{contraction_prop} the functor $j=j(X,Sh(X))$ is a contraction from $$(A(P_X)\text{-mod},D^X)\to(\mathcal{C},D^{Sh(X)}).$$ Thus, it suffices to show that the identity is an isometry from
 $$(\mathcal{C},D^{Sh(X)})\to(\mathcal{C},D_B^{Sh(X)}).$$

Let $I,M\in im(j(X,Sh(X)))$. Since an $\epsilon$-matching immediately produces a diagonal interleaving of the same height, $D\leq D_B$. To show the other inequality, we will prove that for any $(\Lambda,\Lambda)$-interleaving $\phi,\psi$, the induced matching of the triangle beginning at $I$ is a $ h(\Lambda)$-matching. The proof of this inequality will consist of the following three parts.
\begin{enumerate}
\item If $W(I_s)>h(\Lambda)$, then $I_s$ is matched.
\item If $W(M_t)>h(\Lambda)$, then $M_t$ is matched.
\item If $I_s$ and $M_t$ are matched with each other (independent of their $W$ values), then there is a $(\Lambda,\Lambda)$-interleaving between $I_s$ and $M_t$.
\end{enumerate}


The proof of (1), (2) proceed as in Theorem 2 \cite{meehan_meyer_1}  with an additional consideration.  Specifically, in the present situation it is possible for non-isomorphic convex modules in $B(I)$ to be matched with isomorphic convex modules in $B(I^{-\Lambda^2})$.  Thus, we will choose a particular enumeration of each multisubset $I^{-\Lambda^2}_{[x,-]}$ of $\Sigma_{[x,-]}$ (see Subsection \ref{subsection induced}).  If $\sigma_1^{-\Lambda^2}\cong\sigma_2^{-\Lambda^2}$ in $I^{-\Lambda^2}_{[x,-]}$, then  we set $\sigma_1^{-\Lambda^2}<\sigma_2^{-\Lambda^2}$ if $\sigma_1<\sigma_2$ in $I_{[x,-]}$. This was not a concern in \cite{meehan_meyer_1}, since for the democratic choice of weights $\sigma_1\cong \sigma_2$ if and only if $\sigma_1^{-\Lambda^2}\cong \sigma_2^{-\Lambda^2}$. In our present situation, the above choice of enumeration ensures commutativity of the appropriate triangle for (1).  The proof of (2) is similar.  Note that the arguments for (1), (2) do not require any special properties of the poset $P_{Sh(X)}$.

We will now prove (3). We show that if $I_s$ and $M_t$ are matched by the induce matching, then they are $(\Lambda_\epsilon,\Lambda_\epsilon)$-interleaved. Let $I_s\sim[u,U]$, $I_s\Lambda_\epsilon\sim[w,W]$, $M_t\sim[z,Z]$, and $M_t\Lambda_\epsilon\sim[v,V]$.

If $W(I),W(M)\leq\epsilon$, then $I$ and $M$ are immediately $(\Lambda_\epsilon,\Lambda_\epsilon)$-interleaved by $\phi,\psi=0$. Assume that $W(I)>\epsilon$ or $W(M)>\epsilon$.  Then, $s\in S'$ or $t\in T'$ using the notation in Theorem 2 \cite{meehan_meyer_1}. We will show that the following morphisms constitute a $(\Lambda_\epsilon,\Lambda_\epsilon)$-interleaving of $I_s$ and $M_t$. Let $\phi'=\Phi_{I_s,M_t\Lambda}$ by the linearization of $\chi([u,V])$, and similarly $\psi'=\Phi_{M_t,I_s\Lambda}$.  Proceeding as in \cite{meehan_meyer_1} we will show that
$$w\leq z\leq W\leq Z$$ whenever $s\in S'$ or $t\in T'$.  It is enough to show that the following four statements hold.
\begin{enumerate}[(i.)]
\item If $t\in T'$, then $w\leq z$.
\item If $s\in S'$, then $z\leq W$ and $W\leq Z$.
\item If $s\in S'$ and $t\not\in T'$, then $w\leq z$.
\item If $s\not\in S'$ and $t\in T'$, then $z\leq W$ and $W\leq Z$.
\end{enumerate}

We now prove (i.) through (iv.). First, if $t\in T'$, we may define $v_0$ to be the lower endpoint of $M_t^{+\Lambda^2}\Lambda$. I.e., $v_0$ is minimal such that $\Lambda v_0\geq\Lambda^2z$. As $\Lambda(\Lambda z)\geq\Lambda^2z$, by minimality $v_0\leq\Lambda z$. Furthermore, $w$ is minimal such that $\Lambda w\geq u$. Now, as $u\leq v_0$ (by properties of induced matchings) and $v_0\leq\Lambda z$ (by above), $u\leq\Lambda z$ and so minimality of $w$ guarantees that $w\leq z$. This proves (i.).

For (ii.), note that by hypothesis $v$ is minimal such that $\Lambda v\geq z$. Therefore $v\leq u$, and so $\Lambda v\leq\Lambda u$. Combining these inequalities, $z\leq\Lambda u$. Then, $s\in S'$ guarantees that $\Lambda^2u\leq U$. As $W$ is maximal such that $\Lambda W\leq U$, we get that $\Lambda u\leq W$. Combining this with the above inequality, it follows that $z\leq W$. To prove the second inequality in (ii.), we first claim that $W=\Lambda U_0$, where $U_0$ is the maximal element such that $\Lambda^2(U_0)\leq U$.  Once this is established, we have that $\Lambda V\leq Z$ and $U_0\leq V$, so by the properties of induced matchings, we obtain $$W=\Lambda U_0\leq\Lambda V\leq Z.$$ 
Thus, let us verify that $W=\Lambda U_0$. By definition, $\Lambda X_0\leq W$. To show the opposite inequality we will first show that $W\in\mathrm{Im}\,\Lambda$. Since $[u,U]\in im(j(X,Sh(X)))$, it is immediate that $U^{+1}\in X$, and so $U^{+1}-\epsilon\in Y$.  Also $W=(U^{+1}-\epsilon)^{-1}$, since the distance from $U^{+1}-\epsilon$ to $U^{+1}$ is precisely $\epsilon$, $W\leq (U^{+1}-\epsilon)^{-1}$. Furthermore, maximality of $W$ ensures that $\Lambda W^{+1}>U$, so $W^{+1}\geq U^{+1}-\epsilon$, so $W\geq(U^{+1}-\epsilon)^{-1}$. This verifies that $W$ is precisely $(U^{+1}-\epsilon)^{-1}$. Then, by Lemma \ref{refinement}, $W-\epsilon\in Sh(X)$. As $\Lambda(W-\epsilon)=W$, we have that $W$ is in the image of $\Lambda$. Hence, as $\Lambda^2(W-\epsilon)=\Lambda W\leq U$, it must be that $U_0\geq W-\epsilon$, and so $\Lambda U_0\geq W$. Combining inequalities, $W=\Lambda U_0$, and so we have proved the desired statement. This proves (ii.).

We will now prove (iii.).  First, $I\Lambda_\epsilon=[w,W],\text{ where }w=u-\epsilon\text{, and so }\Lambda w=u.$ Since $s\in S'$ we know $\Lambda^2u\leq U$, and so $\Lambda(\Lambda^2w)\leq u$. By the maximality of $W$ under the condition $\Lambda W\leq U$, we have that $\Lambda^2w\leq W$. Finally, using (ii.) and the fact that $t\not\in T'$ guarantees that $\Lambda^2z>Z$, we have that $\Lambda^2w\leq W\leq Z<\Lambda^2z,$ so by monotonicity $w< z$. This proves (iii.).

For (iv.), one can check that $\Lambda W\leq U<\Lambda^2u\leq\Lambda^2 v_0=\Lambda^3z\leq\Lambda Z,$ so by monoticity $W<Z$.

Thus, we have shown that $\psi'=\Phi_{M_t,I_s\Lambda} \neq 0$. To finish, by Corollary \ref{nonzero_then_interleaved} in the next section, $\phi',\psi'$ comprise a $(\Lambda_\epsilon,\Lambda_\epsilon)$-interleaving between $I_s$ and $M_t$, completing the proof of (3).  This finishes the proof of Theorem \ref{shift_isometry_theorem}.  Note that the requirement that we work in $Sh(X)$ only appears in the latter half of (ii.) and in (iii.).

\end{proof}

\section{Interleaving Distance as a Limit}
\label{section interleaving}
We will now use the results from the last section to recover the classical interleaving distance as a limit.
\begin{lemma}\label{DIM_leq_delta}
Let $I, M$ be convex modules for $A(P_X)$ (with its natural metric $d$) and say $I\sim[u,U]$ and $M\sim[z,Z]$. Then, $$D(I,M)\leq\min\{\max\{W(I),W(M)\},\max\{d(u,z),d(U^{+1},Z^{+1})\}\}.$$
\end{lemma}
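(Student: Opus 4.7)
The plan is to bound $D(I, M)$ by each of the two quantities inside the minimum separately; combining the two bounds then yields the lemma.

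For the width bound $D(I, M) \leq \max\{W(I), W(M)\}$, set $\epsilon_1 = \max\{W(I), W(M)\}$ and $\Lambda = \Lambda_{\epsilon_1}$. I would invoke the equivalence $W = W_3$ from Lemma \ref{W} to conclude that $\Hom(I, I\Lambda^2) = 0$ and $\Hom(M, M\Lambda^2) = 0$. Since the natural structure morphisms $I \to I\Lambda^2$ and $M \to M\Lambda^2$ are themselves elements of those Hom spaces, they must vanish, and so the pair $\phi = 0$, $\psi = 0$ trivially forms a $(\Lambda, \Lambda)$-interleaving.

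For the endpoint bound $D(I, M) \leq \max\{d(u, z), d(U^{+1}, Z^{+1})\}$, set $\epsilon_2$ equal to this quantity and $\Lambda = \Lambda_{\epsilon_2}$. The first step is to establish the four endpoint inequalities
\[
\Lambda u \geq z, \qquad \Lambda z \geq u, \qquad \Lambda U^{+1} \geq Z^{+1}, \qquad \Lambda Z^{+1} \geq U^{+1},
\]
each of which follows from the maximality of $\Lambda_{\epsilon_2}$ together with the distance hypothesis (handle separately the cases where the second endpoint lies above or below the first). Applying Lemma \ref{hominterval} to the supports of $I\Lambda$ and $M\Lambda$, these inequalities identify exactly when the relevant Hom spaces can vanish: $\Hom(I, M\Lambda) = 0$ iff $\Lambda u > Z$, and symmetrically $\Hom(M, I\Lambda) = 0$ iff $\Lambda z > U$.

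Take $\phi = \Phi_{I, M\Lambda}$ and $\psi = \Phi_{M, I\Lambda}$ when those generators exist (they are unique up to scalar by Lemma \ref{hom2}), and set the corresponding morphism to zero otherwise. The verification then splits by cases. When both morphisms are nonzero, I would compute directly that $\psi\Lambda \circ \phi$ is the linearization of $\chi([u, U_2'])$ with $U_2' = \max\{p : \Lambda^2 p \leq U\}$, using along the way that $\max\{p : \Lambda p \leq U\} \leq Z$ (a consequence of $\Lambda Z^{+1} \geq U^{+1} > U$). This coincides with the natural structure map $I \to I\Lambda^2$, so the triangle commutes; the other triangle is symmetric. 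When $\phi = 0$, so that $\Lambda u > Z$ and hence $\Lambda u \geq Z^{+1}$, monotonicity of $\Lambda$ combined with $\Lambda Z^{+1} \geq U^{+1}$ yields $\Lambda^2 u \geq U^{+1} > U$, forcing $\Hom(I, I\Lambda^2) = 0$ so the natural map vanishes and matches $\psi\Lambda \circ 0 = 0$. The case $\psi = 0$ is handled symmetrically.

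The main obstacle is the case analysis for the endpoint bound, in particular identifying the composition of the characteristic-function generators with the natural structure morphism when both are nonzero, and showing that vanishing of one Hom space forces vanishing of the corresponding structure map on the same side. Both steps hinge on carefully tracking the endpoints $u, U^{+1}, z, Z^{+1}$ under one and two applications of $\Lambda_{\epsilon_2}$, using only monotonicity and maximality of the translation.
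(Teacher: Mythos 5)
Your proof is correct in overall structure and uses the same key tools as the paper (vanishing morphisms for the width bound, the $\Phi$-generators and endpoint tracking via Lemma \ref{hominterval} for the distance bound, vanishing of $\Hom(-,-\Lambda^2)$ for the commutativity shortcut). However, your route is subtly stronger than the paper's. The paper sets $\gamma$ to be the full minimum on the right-hand side, treats $\gamma = \max\{W(I),W(M)\}$ trivially, and in the remaining case $\gamma < \max\{W(I),W(M)\}$ uses that at least one of $W(I),W(M) > \gamma$ to first establish that \emph{both} $\phi = \Phi_{I,M\Lambda}$ and $\psi = \Phi_{M,I\Lambda}$ are nonzero. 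You instead prove $D(I,M) \leq \max\{d(u,z),d(U^{+1},Z^{+1})\}$ unconditionally, which forces you to handle the possibility that one of $\phi,\psi$ vanishes; that is why you need the vanishing cases at all.

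There is one real gap in the vanishing case analysis. When $\phi = 0$ (but $\psi$ possibly nonzero), you only check the triangle beginning at $I$: you show $\Hom(I,I\Lambda^2)=0$, so $\psi\Lambda\circ 0 = 0$ matches the structure map. But the triangle beginning at $M$ also contains $\phi$, and since $\phi = 0$, you have $\phi\Lambda \circ \psi = 0$ as well. So for \emph{that} triangle to commute you also need $\Hom(M,M\Lambda^2)=0$, i.e.\ $\Lambda^2 z > Z$. This does hold here --- from the endpoint inequality $\Lambda z \geq u$ and the hypothesis $\Lambda u > Z$ one gets $\Lambda^2 z \geq \Lambda u > Z$ --- but you never say it. Writing ``the case $\psi=0$ is handled symmetrically'' suggests you view the two cases as covering disjoint triangles, whereas whenever one of $\phi,\psi$ is zero \emph{both} interleaving triangles become the zero composition, so both structure maps $I\to I\Lambda^2$ and $M\to M\Lambda^2$ must be shown to vanish. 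Once you add that half-line, the argument is complete. The ``both nonzero'' computation ($\psi\Lambda\circ\phi = $ linearization of $\chi([u,U_2'])$, with the needed inequality $U_2'\leq V$ coming from $W\leq Z$, itself from $\Lambda Z^{+1}\geq U^{+1}>U$) is sound, including the implicit fact that the formula returns zero exactly when $U_2'<u$, so it automatically agrees with the structure map even when $\Hom(I,I\Lambda^2)=0$.
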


\begin{proof}
Let $\gamma$ denote the quantity on the right hand side above. If $\gamma=\max\{W(I),W(M)\}$, the result is obvious, since $\phi,\psi=0$ constitute a $(\Lambda_\gamma,\Lambda_\gamma)$-interleaving between $I$ and $M$.

On the other hand, suppose $\gamma<\max\{W(I),W(M)\}$.
%
%
%
%
%
Let $\Lambda=\Lambda_\delta$ and let $\phi=\Phi_{I,M\Lambda}$ and $\psi=\Phi_{M,I\Lambda}$. Let $I\Lambda\sim[w,W]$ and $M\Lambda\sim[v,V]$. By choice of $\gamma$, it is immediate that $v\leq u$ and $V\leq U$.  We will show that $u\leq V$. Similarly, we know that $w\leq z$ and $W\leq Z$ both hold, and will show that $z\leq W$.  We will then establish the commutativity of both interleaving triangles.

By assumption, at least one of $I, M$ has width strictly larger than $\gamma$. Suppose that $W(I)>\delta$. First, we'll show $u\leq V$, which means that $\Phi_{I,M\Lambda}$ is non-zero. Since $d(Z^{+1},U^{+1})\leq\gamma$ and $W(I)>\gamma$, it must be that $\Lambda^2 u<U^{+1}$, and so $d(\Lambda u,U^{+1})>\gamma$. This says that $\Lambda u<Z^{+1}$, i.e., $\Lambda u\leq Z$. By maximality of $V$, $u\leq V$. Hence, $\phi=\Phi_{I,M\Lambda}$, the linearization of $\chi([u,V])$ is not identically zero.

We next show that $\Phi_{M,I\Lambda}$ is non-zero. If $W(M)>\gamma$, we are done by symmetry. Thus, assume $W(M)\leq\gamma$, and $W(I)>\gamma$. Since $d(u,z)\leq\delta$ we have that $z\leq\Lambda u$. As $W(I)>\delta$, it must be that $\Lambda^2u\leq U$, so by maximality of $W$, $\Lambda u\leq W$. Hence, combining inequalities we have $z\leq W$, and so $\psi=\Phi_{M,I\Lambda} \neq0.$

Thus we have shown that when $\gamma<\max\{W(I),W(M)\}$, $\phi,\psi$ are both non-zero. It remains to show that $\phi,\psi$ give commutative interleaving triangles.

Suppose that $W(I)>\gamma$.  To show that the triangle beginning with $I$ commutes we need only show that $\psi\Lambda\circ\phi\neq0$.  By inspection, $(\psi\Lambda\circ\phi)(u) \neq 0$ as required.

By symmetry, if $W(M) > \gamma$, we are done.  Thus, we need only show the commutativity of the other triangle when $W(M) \leq \gamma$.  However, since $\mathrm{Hom}(M,M\Lambda^2)=0$, $\phi\Lambda\circ\psi=0$ as required.

Hence, $\phi, \psi$ are a $(\Lambda_\gamma,\Lambda_\gamma)$-interleaving between $I$ and $M$, so $D(I,M)\leq\gamma$.
\end{proof}

\begin{corollary}\label{nonzero_then_interleaved}
Let $\Lambda$ be a maximal translation of height $h(\Lambda)$.  Let $I, M$ be convex modules for $A(P_X)$.  Say $I\sim[u,U]$ and $M\sim[z,Z]$. If $\Phi_{I,M\Lambda}$ and $\Phi_{M,I\Lambda}$ are both non-zero, then $I,M$ are $(\Lambda,\Lambda)$-interleaved.
\end{corollary}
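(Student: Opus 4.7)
The plan is to verify that the morphisms $\phi:=\Phi_{I,M\Lambda}$ and $\psi:=\Phi_{M,I\Lambda}$, assumed non-zero, render both interleaving triangles commutative, hence constitute a $(\Lambda,\Lambda)$-interleaving. Since $P_X$ is a $1$-Vee, the Hom lemma for convex modules (``$\Hom\cong K$ or $0$'') implies that every Hom space appearing in the argument is at most one-dimensional, and when non-zero its generator is the linearization $\Phi$ of the characteristic function of the overlap of the supports. A pointwise check shows that the composition of two such generators is either zero or again the canonical generator (never a non-trivial scalar multiple). Thus for each triangle it suffices to check that the composition along the two slanted sides vanishes exactly when the horizontal structural map does.

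For the triangle $I\xrightarrow{\phi}M\Lambda\xrightarrow{\psi\Lambda}I\Lambda^2$, I would write $I\sim[u,U]$, $M\sim[z,Z]$, $M\Lambda\sim[v,V]$, $I\Lambda\sim[w,W]$, and $I\Lambda^2\sim[w_2,W_2]$. The Hom-interval lemma applied to $\phi\neq0$ and $\psi\neq0$ yields $v\leq u\leq V\leq U$ and $w\leq z\leq W\leq Z$. Unwinding definitions, $\phi$ is non-zero precisely on $[u,V]$, while $(\psi\Lambda)(p)=\psi(\Lambda p)$ is non-zero precisely when $\Lambda p\in[z,W]$, which, using $W=\max\{q:\Lambda q\leq U\}$, simplifies to $p\in[v,W_2]$. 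Hence $\psi\Lambda\circ\phi$ is non-zero iff $[u,V]\cap[v,W_2]=[u,\min\{V,W_2\}]$ is non-empty, and since $u\leq V$ this amounts to $u\leq W_2$, i.e., $\Lambda^2u\leq U$. But the Hom-interval lemma applied to $I$ and $I\Lambda^2$ (noting $w_2\leq u$ and $W_2\leq U$ are automatic from monotonicity of $\Lambda$) shows that this is exactly the condition for $\Hom(I,I\Lambda^2)\neq 0$, which in turn is exactly when the horizontal $I(\cdot\leq\Lambda^2\cdot)$ is non-zero. The two sides of the triangle therefore vanish simultaneously, and when they do not they are both the canonical generator $\Phi_{I,I\Lambda^2}$, so the triangle commutes.

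The triangle at $M$ is handled by the symmetric computation, interchanging the roles of $I$ and $M$: $\phi\Lambda\circ\psi$ is non-zero iff $\Lambda^2z\leq Z$, which is exactly the condition for the horizontal map $M(\cdot\leq\Lambda^2\cdot)$ to be non-zero, and agreement of the two generators when both are non-zero follows in the same way. I expect the main (very mild) obstacle to be the bookkeeping that rewrites the non-vanishing locus of $\psi\Lambda$ in terms of the support of $I\Lambda^2$ through the chain $\Lambda p\leq W\Leftrightarrow\Lambda^2 p\leq U\Leftrightarrow p\leq W_2$; once this is in place, the equivalence of the non-vanishing conditions on both sides of each triangle is immediate, and commutativity follows from the description of how $\Phi$-generators compose. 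Note that the maximality of $\Lambda$ plays no role in the argument; any translation of the given height would work, which is why this corollary is essentially a distillation of the non-vanishing computations already carried out in the proof of Lemma~\ref{DIM_leq_delta}.
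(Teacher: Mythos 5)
Your proof is correct, but it takes a genuinely different route from the paper. The paper's argument is a three-line deduction: from the two non-vanishing conditions and the Hom-interval lemma it reads off $h(\Lambda)\geq\max\{d(u,z),d(U^{+1},Z^{+1})\}\geq\gamma$ where $\gamma$ is the minimum appearing in Lemma~\ref{DIM_leq_delta}, then invokes that lemma to get $D(I,M)\leq\gamma$, and finally upgrades the resulting $(\Lambda_\gamma,\Lambda_\gamma)$-interleaving to a $(\Lambda,\Lambda)$-interleaving using $\Lambda\geq\Lambda_\gamma$, which is where maximality of $\Lambda$ enters. You instead re-derive the commutativity of both triangles from scratch: using that Hom spaces between convex modules are at most one-dimensional with $\Phi$ as generator and that $\Phi$-generators compose to $\Phi$-generators (or zero), you show $\psi\Lambda\circ\phi$ is supported on $[u,\min\{V,W_2\}]$, the structural map is supported on $[u,W_2]$, and that $W_2\leq V$ follows from $W\leq Z$ (given by $\psi\neq0$), forcing agreement; the other triangle is symmetric. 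Your approach is longer and duplicates work already present in the proof of Lemma~\ref{DIM_leq_delta}, which is exactly why the authors packaged it as a corollary of that lemma. On the other hand, your argument makes transparent exactly which non-vanishing conditions are used where, shows as a bonus that the interleaving morphisms can be taken to be the canonical $\Phi$-generators themselves (the paper's proof only guarantees existence of some interleaving), and as you correctly observe does not actually use maximality of $\Lambda$ — though the paper's statement is phrased for maximal translations because that is the only case invoked in the proof of Theorem~\ref{shift_isometry_theorem}.
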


\begin{proof}
Let $\Phi_{I,M\Lambda}$ and $\Phi_{M,I\Lambda}$ are both non-zero only if $h(\Lambda) \geq\max\{d(u,z),d(U^{+1},Z^{+1})\}\geq\gamma.$  By Lemma \ref{DIM_leq_delta}, $\gamma \geq D(I,M)$, hence $I,M$ are $(\Lambda_\gamma,\Lambda_\gamma)$-interleaved, and so they are also $(\Lambda,\Lambda)$-interleaved as $h(\Lambda) \geq\gamma$.
\end{proof}
The next Proposition also follows from Lemma \ref{DIM_leq_delta}.
\begin{prop}
\label{distance formula}
Let $X \subseteq \mathbb{R}$ be finite, $I,M$ be indecomposables in $A(P_X)$-mod with $I\sim[u,U]$, $M\sim[z,Z]$. Then,
$$D(I,M)=\min\{\max\{W(I),W(M)\},\max\{d(u,z),d(U^{+1},Z^{+1})\}\}.$$ 
\end{prop}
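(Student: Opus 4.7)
The inequality $D(I,M)\leq\gamma$, where $\gamma$ denotes the right-hand side of the formula, is exactly the content of Lemma \ref{DIM_leq_delta}, so the plan is to establish the reverse inequality. I would argue by contradiction: assume there is a $(\Lambda,\Lambda)$-interleaving $(\phi,\psi)$ between $I$ and $M$ with $h(\Lambda)=\epsilon<\gamma$, where by Lemma \ref{T(P)} we may take $\Lambda=\Lambda_\epsilon$. The hypothesis $\epsilon<\gamma$ unpacks to both $\epsilon<\max\{W(I),W(M)\}$ and $\epsilon<\max\{d(u,z),d(U^{+1},Z^{+1})\}$, and the goal is to contradict the second.

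The first move is to force $\phi$ and $\psi$ to be nonzero. Without loss of generality assume $W(I)>\epsilon$; by Lemma \ref{W} this gives $\Hom(I,I\Lambda^2)\neq 0$. Since this Hom space is at most one-dimensional by Lemma \ref{hom2} and, when nonzero, is spanned by the canonical map $I(\mathrm{id}\leq\Lambda^2)$, that canonical map must itself be nonzero. Commutativity of the interleaving triangle at $I$ identifies this canonical map with $\psi\Lambda\circ\phi$, so neither $\phi$ nor $\psi$ can vanish. Writing $M\Lambda\sim[v,V]$ and $I\Lambda\sim[w,W]$, Lemma \ref{hominterval} applied to the resulting nonzero morphisms yields the endpoint relations $v\leq u\leq V\leq U$ and $w\leq z\leq W\leq Z$.

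From these I would extract $d(u,z)\leq\epsilon$ and $d(U^{+1},Z^{+1})\leq\epsilon$, producing the required contradiction. For the birth endpoints: since $v$ is minimal with $\Lambda v\geq z$ and $v\leq u$, monotonicity gives $z\leq\Lambda u$; symmetrically $w\leq z$ yields $u\leq\Lambda z$. Whichever of $u,z$ is smaller, the other lies within distance $h(\Lambda)=\epsilon$ of it. For the death endpoints the key observation is that maximality of $V$ subject to $\Lambda V\leq Z$ makes $V^{+1}$ the minimal element with $\Lambda V^{+1}\geq Z^{+1}$; that is, $V^{+1}$ plays the same structural role for $Z^{+1}$ that $v$ plays for $z$. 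From $V\leq U$ one gets $V^{+1}\leq U^{+1}$ and hence $\Lambda U^{+1}\geq Z^{+1}$; the symmetric argument from $W\leq Z$ gives $\Lambda Z^{+1}\geq U^{+1}$, and together these bound $d(U^{+1},Z^{+1})$ by $\epsilon$.

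The main subtlety I anticipate is the shift between $U,Z$ and $U^{+1},Z^{+1}$. The interleaving directly constrains $U$ and $Z$, so one might naively expect the formula to feature $d(U,Z)$; the correct version involves the successors, corresponding to the ``death times'' of the convex modules, and the bridge between the two inequalities is precisely the identification of $V^{+1}$ as the minimal preimage of $Z^{+1}$ under $\Lambda$. Working inside the suspended poset $P_X^+$ ensures $U^{+1}$ and $Z^{+1}$ are always well defined.
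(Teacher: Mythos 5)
Your argument is correct and follows essentially the same route as the paper's proof: one inequality is delegated to Lemma~\ref{DIM_leq_delta}, and the other is obtained by showing that if the interleaving height is below the width threshold, commutativity forces $\phi,\psi$ to be nonzero, after which Lemma~\ref{hominterval} gives the endpoint inequalities. The only difference is that you spell out explicitly how $v\leq u\leq V\leq U$ and $w\leq z\leq W\leq Z$ translate into $d(u,z)\leq\epsilon$ and $d(U^{+1},Z^{+1})\leq\epsilon$ via the observation that $V^{+1}$ is minimal with $\Lambda V^{+1}\geq Z^{+1}$; the paper asserts the implication in one line without writing out that calculation.
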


\begin{proof} By Lemma \ref{DIM_leq_delta}, we need only show that $D(I,M)\geq\gamma$. Let $\epsilon = D(I,M)$ and let $\Lambda=\Lambda_\epsilon$.  If $\phi,\psi=0$ is a $(\Lambda,\Lambda)$-interleaving between $I$ and $M$ it must be that $h(\Lambda)\geq\max\{W(I),W(M)\}$, and so $D(I,M)\geq\gamma$.

Otherwise, it must be that, without loss of generality, $\Phi_{I,I\Lambda^2}\neq0$. Hence, to have a commutative triangle beginning at $I$, it must be that $\phi,\psi$ are both non-zero. But, $\textrm{Hom}(I,M\Lambda)$ and $\textrm{Hom}(M,I\Lambda)$ are both non-zero only if $h(\Lambda)\geq\max\{d(u,z),d(U^{+1},Z^{+1})\}$, and so $D(I,M)\geq\gamma$.
\end{proof}

\medskip

We now connect our work to persistent homology.  Again, let $D_1, D_2, ... D_n$ be finite data sets in some metric space, and let $L \subseteq \mathbb{R}$ be the corresponding union of the jump discontinuities of their Vietoris-Rips complexes.  Let $X \in \Delta(D)$, and let $D^X$ denote the corresponding interleaving metric and on the category $A(P_X)$-mod.  Similarly, let $W_X$ denote the width of a convex $A(P_X)$-module.  We now work towards the proof of Theorem \ref{main theorem}, showing that the classical interleaving distance can be recovered as the limit over the directed set $\Delta(X)$.  If $I$ is a one-dimensional persistence module coming from data, we say that $I$ has endpoints in $L$ if the jump discontinuities of the Vietoris-Rips complex of $I$ are contained in $L$.

\begin{lemma}
\label{Lemma W}
Let $\sigma$ be any convex one-dimensional persistence module (for $\mathbb{R}$) whose endpoints are contained in $L$, say $\sigma \sim [r,R)$.  Then,
\begin{eqnarray*}
\displaystyle\lim\limits_{X \in \Delta(D_1,D_2)}\big(W^X(\delta^X\sigma)\big)=W(\sigma) = \frac{R-r}{2}.
\end{eqnarray*}
\end{lemma}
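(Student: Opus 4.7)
The plan is to compute $W^X(\delta^X\sigma)$ explicitly in terms of the elements of $X$ lying in $[r,R]$, and then show that this quantity can be made arbitrarily close to $(R-r)/2$ by refining $X$.

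First, since $r,R\in L\subseteq X$, the discretization $\delta^X\sigma$ is the convex $A(P_X)$-module with support $[r,R^{-1}]$, where $R^{-1}$ denotes the predecessor of $R$ in $X$. In the notation of Lemma \ref{W}, this means $u=r$ and $U^{+1}=R$. Using characterization (iii) of that lemma, $W^X(\delta^X\sigma)$ is the minimum $\epsilon$ such that there exists $\Lambda_\epsilon\in\mathcal{T}(P_X)$ with $\textrm{Hom}(\delta^X\sigma,\delta^X\sigma\Lambda_\epsilon^2)=0$. Applying Lemma \ref{hominterval} to the convex pair $\delta^X\sigma\sim[r,R^{-1}]$ and $\delta^X\sigma\Lambda_\epsilon^2$, this condition is equivalent to $\Lambda_\epsilon^2 r\geq R$.

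With the natural weights on $P_X$, the maximal translation $\Lambda_\epsilon$ sends $p$ to the largest element of $X$ that is no greater than $p+\epsilon$; iterating, $\Lambda_\epsilon^2 r\geq R$ holds if and only if there exists $m\in X\cap[r,R]$ with $m-r\leq\epsilon$ and $R-m\leq\epsilon$. Therefore,
\begin{eqnarray*}
W^X(\delta^X\sigma) = \min_{m\in X\cap[r,R]}\max\{m-r,\,R-m\}.
\end{eqnarray*}
Because $\max\{m-r,R-m\}\geq\tfrac{(m-r)+(R-m)}{2}=\tfrac{R-r}{2}$ for every $m$, the lower bound $W^X(\delta^X\sigma)\geq(R-r)/2$ holds uniformly in $X$.

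For the upper bound, I will exploit the fact that $\Delta(D_1,D_2)$ consists of \emph{all} finite supersets of $L$. In particular, the set $X_0:=L\cup\{(r+R)/2\}$ belongs to $\Delta(D_1,D_2)$, and for any $X\in\Delta(D_1,D_2)$ with $X\supseteq X_0$, taking $m=(r+R)/2$ in the formula above yields $W^X(\delta^X\sigma)\leq(R-r)/2$. Combined with the uniform lower bound, this is in fact an equality for every such $X$, which is stronger than the claimed convergence. By the definition of limit along the directed set $\Delta(D_1,D_2)$, this establishes $\lim_X W^X(\delta^X\sigma)=(R-r)/2=W(\sigma)$.

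The only delicate point to watch is that the infinity vertex of $P_X^+$ plays no role in the calculation: since $U^{+1}=R\in X$ sits strictly below the infinity vertex, both applications of $\Lambda_\epsilon$ computing $\Lambda_\epsilon^2 r$ can be arranged to land inside $X$ itself, so the weight $b$ at the infinity vertex never enters the analysis and the formula for $W^X(\delta^X\sigma)$ is genuinely local to $X\cap[r,R]$.
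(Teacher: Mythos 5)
Your proof is correct, and it arrives at the same explicit formula
\[
W^X(\delta^X\sigma)=\min_{m\in X\cap[r,R]}\max\{m-r,\,R-m\}
\]
that the paper's proof uses, but you supply the derivation (via the $W_3$ characterization in Lemma \ref{W}, the Hom criterion of Lemma \ref{hominterval}, and an iteration of the maximal translation), while the paper simply asserts the formula. Where the two arguments genuinely diverge is the passage to the limit. The paper argues by mesh: given $\epsilon>0$ it picks $Y\in\Delta(D)$ whose consecutive gaps (on the relevant interval) are less than $\epsilon/2$, so that for any refinement $X'\supseteq Y$ some $x\in X'$ lies within $\epsilon/2$ of the midpoint $(r+R)/2$, yielding $|W^{X'}(\delta^{X'}\sigma)-(R-r)/2|<\epsilon$. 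You instead split into a uniform lower bound $W^X(\delta^X\sigma)\geq(R-r)/2$ (an averaging inequality valid for every $X$) and an exact upper bound achieved by enlarging to the single set $X_0=L\cup\{(r+R)/2\}$, after which $W^X(\delta^X\sigma)=(R-r)/2$ for all $X\supseteq X_0$. Your version is cleaner and slightly stronger: the net is eventually \emph{constant}, not merely convergent, and you do not need the paper's auxiliary condition on $\max(Y)$ since you separately check that the infinity vertex (and hence the weight $b$) never enters the computation. One stylistic remark: the directed set in the lemma is written $\Delta(D_1,D_2)$, but the paper elsewhere abbreviates it as $\Delta(D)$; your use of either is harmless.
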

\begin{proof}
Let $\epsilon > 0 \in \mathbb{R}$.  Let $Y = Y(\epsilon)$ be any element of $\Delta(D)$ such that
\begin{enumerate}[(i.)]
\item the max$(Y) > \textrm{max}(L)$, and 
\item the difference between consecutive elements of $Y \cap [m,M+\epsilon]$ is less than $\frac{1}{2}\epsilon$.
\end{enumerate}

Note that any superset of $Y$ necessarily satisfies (i.), (ii.) as well.  Let $X' \in \Delta(D)$ with $X' > Y$.  Then,

$$W^{X'}(\delta^{X'}\sigma) = \textrm{min}\big\{\big\{ \textrm{max}\big\{x - r, R-x\big\} : x \in X' \cap [r,R)\big\}.$$

\medskip
Since $\frac{R-r}{2}$ must be within $\frac{1}{2}\epsilon$ of some $x$, clearly 
$$\big| W^{X'}(\delta^{X'}\sigma)) - W(\sigma)\big| < \epsilon \textrm{ as required.}$$
\end{proof}

We point out that condition (i.) above removes the consideration of the weight "$b$" from the discussion.  Next we will show that if $\sigma, \tau $ are convex one-dimensional persistence modules (for $\mathbb{R}$) then their interleaving distance can be recovered as a discrete limit as well.  This establishes Theorem \ref{main theorem} for convex modules.

\begin{lemma}
\label{lemma D}
Let $\sigma, \tau$ be any convex one-dimensional persistence modules whose endpoints are contained in $L$.  Say $\sigma \sim [r,R), \tau \sim [s,S)$.  Then,
$$\displaystyle\lim\limits_{X \in \Delta(D)}\big(D^X(\delta^X\sigma,\delta^X\tau)\big) =  D(\sigma,\tau).$$

\begin{proof}
Proceeding as in the proof of Lemma \ref{lemma D}, let $\epsilon$ be positive and set $Y= Y(\epsilon)\in \Delta(D)$.  Let $X' \in\Delta(D)$ with $X' > Y$.  Then, by Proposition \ref{distance formula},

$$D^{X'}(\delta^{X'}\sigma,\delta^{X'}\tau) = \textrm{min}\Big\{ \hspace{.1 in}\textrm{max}\{ W^{X'}(\delta^{X'}\sigma),W^{X'}(\delta^{X'}\tau)\}, \hspace{.1 in}\textrm{max}\{|r-s| ,|R - S| \}  \Big\}.$$

Clearly, this is within $\epsilon$ of 

$$D(\sigma,\tau) = \textrm{min}\Big\{ \hspace{.1 in}\textrm{max}\{ W(\sigma),W(\tau)\}, \hspace{.1 in}\textrm{max}\{|r-s| ,|R-S| \}  \Big\},$$

by Lemma \ref{Lemma W}.  The result follows.

\end{proof}
\end{lemma}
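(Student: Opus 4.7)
The plan is to reduce the claim to the explicit formula of Proposition~\ref{distance formula} and then apply Lemma~\ref{Lemma W} together with a direct estimate on the endpoint distances. Since $\sigma$ and $\tau$ have endpoints in $L$ and since $\Delta(D)$ is a directed system of finite supersets of $L$, once $X$ is fine enough on the interval of interest, each relevant endpoint of $\sigma$ and $\tau$ will itself lie in $X$, so the combinatorial discretization will see the correct left/right endpoints.

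First, given $\epsilon > 0$, I would mirror the proof of Lemma~\ref{Lemma W} and choose $Y = Y(\epsilon) \in \Delta(D)$ with (i) $\max(Y)$ larger than every endpoint that appears among $r,R,s,S$, and (ii) consecutive elements of $Y$ inside the relevant window are within $\epsilon/2$. Any $X' \in \Delta(D)$ with $X' \supseteq Y$ then inherits these properties, and importantly, because $r,R,s,S \in L \subseteq X'$, the lower and upper endpoints of $\delta^{X'}\sigma$ and $\delta^{X'}\tau$ are literally $r,R,s,S$. Denoting by $d^{X'}$ the weighted graph metric on $P_{X'}^+$ coming from the natural weights, condition (ii) moreover forces $d^{X'}(r,s) = |r-s|$ and $d^{X'}(R^{+1},S^{+1})$ to agree with $|R-S|$ up to at most $\epsilon$, since the predecessor of $R^{+1}$ and $S^{+1}$ in $X'$ is within $\epsilon/2$ of $R$ and $S$ respectively.

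Next, I would apply Proposition~\ref{distance formula} directly to $\delta^{X'}\sigma$ and $\delta^{X'}\tau$, which gives
\[
D^{X'}(\delta^{X'}\sigma,\delta^{X'}\tau) = \min\bigl\{\max\{W^{X'}(\delta^{X'}\sigma),W^{X'}(\delta^{X'}\tau)\},\ \max\{d^{X'}(r,s),d^{X'}(R^{+1},S^{+1})\}\bigr\}.
\]
By Lemma~\ref{Lemma W}, each of $W^{X'}(\delta^{X'}\sigma)$ and $W^{X'}(\delta^{X'}\tau)$ lies within $\epsilon$ of $W(\sigma) = (R-r)/2$ and $W(\tau) = (S-s)/2$ respectively, and by the paragraph above the endpoint terms are within $\epsilon$ of $|r-s|$ and $|R-S|$. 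Since $\min$ and $\max$ are $1$-Lipschitz in each argument, the right-hand side is within $\epsilon$ of
\[
\min\bigl\{\max\{W(\sigma),W(\tau)\},\ \max\{|r-s|,|R-S|\}\bigr\},
\]
which is the standard closed-form for the interleaving distance of two interval persistence modules on $\mathbb{R}$, hence equals $D(\sigma,\tau)$.

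The one subtle point, and the step where I expect the actual friction, is justifying that the continuous interleaving distance $D(\sigma,\tau)$ on $\mathbb{R}$ really does coincide with the closed-form $\min\{\max\{W(\sigma),W(\tau)\},\max\{|r-s|,|R-S|\}\}$: the discrete formula of Proposition~\ref{distance formula} was proved for $A(P_X)$-mod, and one needs the analogous classical computation on $\mathbb{R}$. I would dispatch this either by quoting the well-known interval-module computation from the persistence literature, or by running the same argument as in Lemma~\ref{DIM_leq_delta} and Proposition~\ref{distance formula} directly in $\mathbb{R}$, using shift translations by a real parameter rather than the discrete $\Lambda_\epsilon$. Once this identification is in place, the convergence above shows $\lim_{X\in\Delta(D)} D^X(\delta^X\sigma,\delta^X\tau) = D(\sigma,\tau)$, and since $Y(\epsilon)$ was independent of $\sigma$ and $\tau$ beyond their endpoints, the convergence is uniform on pairs of convex modules with endpoints in $L$, setting up the promotion to general persistence modules in Theorem~\ref{main theorem}.
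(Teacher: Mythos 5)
Your proof is essentially the paper's own: apply Proposition~\ref{distance formula} to the discretized modules $\delta^{X'}\sigma,\delta^{X'}\tau$ and then pass to the limit via Lemma~\ref{Lemma W} together with $1$-Lipschitzness of $\min$ and $\max$. You are more explicit than the paper about the one genuinely nontrivial step --- that the $\mathbb{R}$-indexed closed form $D(\sigma,\tau)=\min\{\max\{W(\sigma),W(\tau)\},\max\{|r-s|,|R-S|\}\}$ needs its own justification, which the paper silently assumes --- though note a small imprecision in your endpoint estimate: the quantity in Proposition~\ref{distance formula} is $d^{X'}(U^{+1},Z^{+1})$ where $U,Z$ are the \emph{discrete} upper endpoints (i.e.\ the predecessors of $R,S$ in $X'$), so $U^{+1}=R$, $Z^{+1}=S$, and that term equals $|R-S|$ exactly, not merely up to $\epsilon$.
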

Now that we have established Lemmas \ref{Lemma W}, \ref{lemma D}, we are ready to prove Theorem \ref{main theorem}

\begin{proof}
Let $I, M$ be one-dimensional persistence modules whose endpoints lie in $L$.  Let $\gamma > 0$, and let $\epsilon$ be such that
$$\epsilon - \frac{1}{2}\gamma < D(I,M) \leq \epsilon $$ 

Let $Y \in \Delta(D)$ be such that
\begin{enumerate}[(i.)]
\item max$(Y) > \textrm{max}(L)$, and
\item $y_{i+1}-y_i < \frac{1}{8}\gamma$, for $y_i \in Y\cap [m,M+\frac{1}{4}\delta]$
\end{enumerate}

\medskip
Let $X' \in \Delta(D)$, with $X' \supseteq Y$.  We will show that 
$$D(I,M) - \delta < D^{X'}(\delta^{X'}I,\delta^{X'}M) \leq D(I,M) \leq D(I,M) + \gamma $$

First, let $\delta^{X'}\sigma \in B(\delta^{X'}I) \cup B(\delta^{X'}M)$, and say $W^{X'}(\delta^{X'}\sigma) > \epsilon + \frac{1}{2}\gamma$.  Since $\epsilon \geq D(I,M)$, there exists a $(\Lambda_{\epsilon}, \Lambda_{\epsilon})$-interleaving between $I$ and $M$.  But then, by the isometry theorem of Bauer and Lesnick (\cite{induced_matchings}), there is an $\epsilon$-matching between $B(I)$ and $B(M)$.  But then, we have that
\begin{eqnarray*}
W(\sigma) + \frac{1}{2}\gamma > W^{X'}(\delta^{X'}\sigma) > \epsilon + \frac{1}{2}\gamma \textrm{, so } W(\sigma) > \epsilon.
\end{eqnarray*}

Therefore $\sigma$ is matched with some $\tau$ in the opposite barcode.  So by hypothesis,
\begin{eqnarray*}
D(\sigma,\tau) = \textrm{min}\Big\{ \hspace{.1 in}\textrm{max}\{ W(\sigma),W(\tau)\}, \hspace{.1 in}\textrm{max}\{|r-s| ,|R-S| \}  \Big\}
\end{eqnarray*}

where $\sigma \sim [r,R)$ and $\tau \sim [s,S)$.

But,
\begin{eqnarray*}
&&\textrm{max}\big\{  W^{X'}(\delta^{X'}\sigma),W^{X'}(\delta^{X'}\tau) \big\} \leq \textrm{max}\big\{ W(\sigma)+\frac{1}{2}\gamma, W(\tau) + \frac{1}{2}\gamma \big\} = \\
&&\textrm{max}\big\{W(\sigma),W(\tau) \big\} + \frac{1}{2}\gamma.
\end{eqnarray*}

Therefore,
$$D^{X'}(\delta^{X'}\sigma, \delta^{X'}\tau) \leq D(\sigma, \tau) + \frac{1}{4}\gamma \leq \epsilon + \frac{1}{2}\gamma < D(I,M) + \gamma. $$

So, the assignment $\delta^{X'}\sigma \updownarrow \delta^{X'}\tau \iff \sigma \updownarrow \tau$ defines a diagonal interleaving (a matching) between $\delta^{X'}I$ and $\delta^{X'}M$ of height $\epsilon + \frac{1}{2}\gamma$.  Thus, $D^X(\delta^{X'}I,\delta^{X'}M) \leq D(I,M) + \gamma$ as required.
\vspace{.2 in}

Now, if $D^{X'}(\delta^{X'}I,\delta^{X'}M) \geq D(I,M) - \gamma$ we are done.  Thus for a contradiction suppose that $D^{X'}(\delta^{X'}I,\delta^{X'}M) < D(I,M) - \gamma$.  Then, there exists a weight $\epsilon$ for $X'$ with $\epsilon < D(I,M) - \gamma$ and there exists a $(\Lambda^{X'}_{\epsilon},\Lambda^{X'}_{\epsilon})$-interleaving between $\delta^{X'}I$ and $\delta^{X'}M$, where $\Lambda^{X'}_{\epsilon}$ is the maximal translation of height $\epsilon$ for $P_{X'}$.  Then, by Theorem \ref{shift_isometry_theorem} there is an $\epsilon$-matching  between $B(j(X',Z)\delta^{X'}I)$ and $B(j(X',Z)\delta^{X'}M)$ for $Z = Sh(X')$.  By Proposition \ref{functor}, 
$$ j(X',Z)\delta^X I = \delta^Z I \textrm{ and }  j(X',Z)\delta^X M= \delta^Z M .$$
Thus, there is an $\epsilon$-matching between $B(\delta^ZI)$ and $B(\delta^ZM)$.  Now, say $W(\sigma) > \epsilon + \frac{1}{2}\gamma$.  Then,
\begin{eqnarray*}
W^Z(\delta^Z\sigma) \geq W(\sigma) > \epsilon + \frac{1}{2}\gamma  > \epsilon.
\end{eqnarray*}

Therefore, $\sigma^Z \updownarrow \tau^Z$ for some element $\delta^Z\tau$ of the opposite barcode, with 
$$D^Z(\sigma^Z,\tau^Z) \leq \epsilon < D(I,M) - \gamma. $$

Therefore, define the matching $\sigma \updownarrow \tau \iff \sigma^Z \updownarrow \tau^Z$.  But then, 
$$D(\sigma, \tau) \leq D^Z(\sigma^Z, \tau^Z) + \frac{1}{2}\gamma \leq \epsilon +\frac{1}{2}\gamma < D(I,M) - \frac{1}{2}\gamma. $$

This matching shows that 

$$D(I,M) \leq \epsilon + \frac{1}{2}\gamma  < D(I,M) - \frac{1}{2}\gamma .$$

As this is clearly a contradiction, so it must be the case that $D^{X'}(\delta^{X'}I,\delta^{X'}M) \geq D(I,M) - \gamma$ as required.  The result is proven.
\end{proof}

\section{Regularity}
\label{section regu}

As we have seen, the poset $P_n$ with arbitrary choice of weights has the property that an interleaving between two modules need not produce an induced matching of barcodes of the same height (see Example \ref{counterexample_1}).  While this does not necessarily mean that the isometry theorem is false in this context, it is clearly an obstruction to its proof.  In this section, we show that when we identify $(P_n,\{a_i\}, b)$ with $P_X$, $X \subseteq \mathbb{R}$, unless $X$ satisfies certain regularity conditions there will always exist interleavings whose induced matchings do not have the same height.  One would not expect such regularity for a poset $P_X$ which comes from real world data.

In what follows, it is convenient to work with maximal translations. We will define what it means for a poset to be \emph{regular} after examining some conditions on maximal translations.

Let $x_i<x_l$ in $X$, and let $\Lambda=\Lambda_{(x_l-x_i)}$.  Suppose that
\begin{enumerate}[(a)]
\item $x_{l+1}<\Lambda(x_{i+1})$,
\item $\Lambda(x_{l+1})<\Lambda^2(x_{i+1})$, and
\item $\Lambda(x_{i-1})>x_{i-1}$.
\end{enumerate}
Then, one can produce an interleaving whose induced matching has strictly larger height. If $X$ is identically the set of jump discontinuities of a data set, one would expect the the existence of some $x_i < x_l$ satisfying the above.  

On the other hand, if $X$ avoids conditions (a) or (b) for all $x_i<x_l$ we say that $X$ will be regular.  Property (c) is a purely technical condition that will not be commented on further.  Roughly speaking, a regular set has a periodicity associated both with its elements and the spaces between its elements.  We now define regularity.  After the definition, we connect regularity to the absence of a maximal translation of the form above.

\begin{definition}
Let $X$ be a finite subset of $\mathbb{R}$, and let $x_i \in X = \{x_1< x_2 < ...< x_n\}$.  Then, $X$ is regular at $x_i$ if for all $x_l>x_i$, either
\begin{enumerate}[(i.)]
\item $x_{l+2}-x_l>x_{i+1}-x_i$, or
\item $x_{l+2}-x_l\leq x_{i+1}-x_i$, and $x_{k+1} > x_{l+t}+x_l-x_i$, where $t$ be maximal such that $x_{l+t}-x_l\leq x_{i+1}-x_i$, and $\Lambda_{(x_l-x_i)}(x_{l+1})=x_k$.
\end{enumerate}
We say that the set $X$ is regular if $X$ is regular at every $x_i\in X$.
\end{definition}

We now explain regularity at $x_i$.  Let $\epsilon=x_l-x_i$ and $\Lambda=\Lambda_\epsilon$, and note that $\Lambda(x_i)=x_l$ and $\Lambda(x_{l+1})=x_k$. In addition, by the choice of $t$ it is always the case that $\Lambda(x_{i+1})=x_{l+t}$.  Clearly, if $x_i$ is regular and $x_i<x_l$, exactly one of (i.), (ii.) hold.

First, if (i.) holds at $x_l$ the spacing of points in the poset $X$ is uniform in the sense that the length of the edge from $x_i$ to $x_{i+1}$ is surpassed by the sum of the two consecutive edges $x_l$ to $x_{l+2}$. In terms of the translation $\Lambda$, property (i.) says that $0\leq t<2$ or $\Lambda(x_{i+1})\leq x_{l+1}$. This is the negation of (a) above.  On the other hand, if (ii.) holds at $x_l$, the "hole'' in $X$ given by the edge from $x_i$ to $x_{i+1}$ is periodic.  Specifically, there are no vertices in $X$ contained in the real interval $(x_{l+1}+\epsilon,x_{l+t}+\epsilon]$ (see the figure below).  In terms of the translation $\Lambda$, property (ii.) corresponds to the statement that $\Lambda(x_{l+1})=\Lambda(x_{l+t})$.  Of course, this is the negation of (b) above.
\begin{center}
\includegraphics{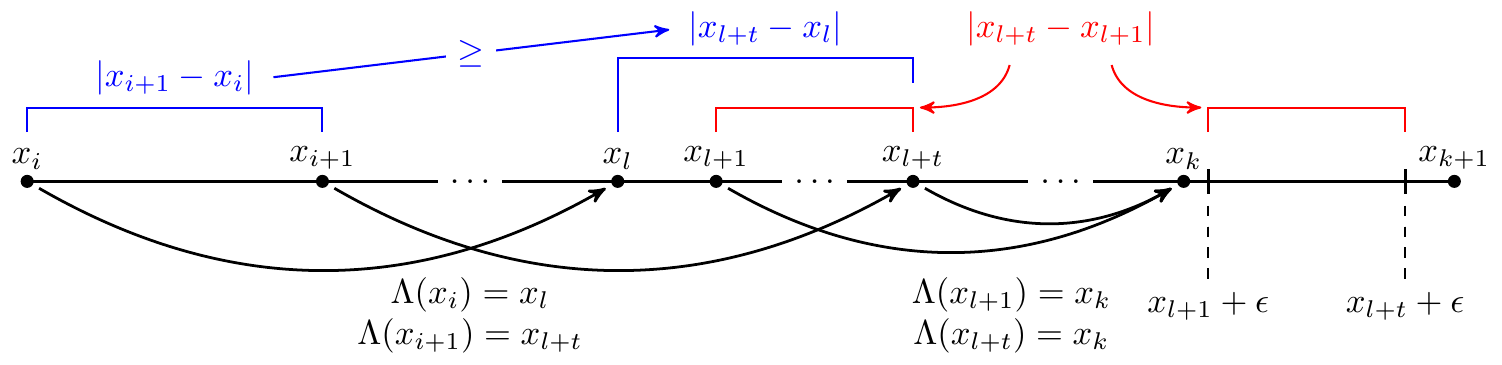}
\end{center}

Of course, if $X$ is regular, it is regular at every $x_i$.  Of particular interest is the case in which $x_{i+1}-x_i$ is large. Then, for all $x_l$ with $x_i<x_l$ where (i.) is satisfied, the sum of the two edges after $x_l$ must be long. Thus, since $x_{i+1}-x_i$ is large, the distances $x_{l+2}-x_{l+1}, x_{l+1}-x_l$ taken together must also be large.  Alternatively, if (ii.) is satisfied then a hole close to the size of $x_{i+1}-x_i$ must be repeated at a distance of exactly $\epsilon$ away from $x_{l+1}$.  Since the distance $x_{i+1}-x_i$ is large, this says that large holes must be repeated regularly.  We emphasize that the above statements must hold for all $x_i \in X$ if $X$ is regular.  Alternatively, if $X$ fails to be  regular (with an addition technical condition), then there always exist interleavings whose corresponding induced matchings have strictly larger heights.

\begin{prop}
If $X$ is not regular at some $x_i<x_l$ where $x_{i-1}$ is not fixed by $\Lambda_{(x_l-x_i)}$, then there exists an interleaving whose induced matching has strictly larger height (see Example \ref{counterexample_1}).
\end{prop}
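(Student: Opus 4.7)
The plan is to generalize the construction of Example \ref{counterexample_1} to the indices $(x_i, x_l)$ witnessing non-regularity, using the three conditions (a), (b), (c) that are, respectively, the failure of regularity in the form (i.), the failure of regularity in the form (ii.), and the hypothesis that $x_{i-1}$ is not fixed. Setting $\epsilon = x_l - x_i$ and $\Lambda = \Lambda_\epsilon$, I would construct three convex modules $A, C, D$ whose supports sit on the indices $x_{i-1}, x_i, x_l, x_{l+1}$ and their $\Lambda$-images in direct analogy to the three supports depicted in Example \ref{counterexample_1}. Condition (c) is used to guarantee that $W(A) > \epsilon$, so that $A$ must be matched in any $\epsilon$-matching.

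The first step is to verify that $A$ and $C \oplus D$ are $(\Lambda, \Lambda)$-interleaved by $\phi = \Phi_{A, D\Lambda}$ and $\psi = \Phi_{D, A\Lambda}$ (with components into or out of $C$ taken to be zero). Non-vanishing of these two linearizations amounts to a chain of four inequalities by Lemma \ref{hominterval}; condition (a) is precisely what ensures that the right endpoint of $D\Lambda$ lies where it needs to. For the two commutative triangles, Lemma \ref{hom2} reduces each to checking non-vanishing at a single point: the triangle starting at $D$ commutes trivially because (b) forces $\mathrm{Hom}(D, D\Lambda^2) = 0$, while the triangle starting at $A$ commutes because the composition $\psi\Lambda \circ \phi$ agrees with the natural map $A \to A\Lambda^2$ (which is non-zero by (c)).

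The second step is the induced-matching computation of Subsection \ref{subsection induced}. Following the factorization $A \to \mathrm{im}(\phi) \to (C \oplus D)\Lambda$ and enumerating $\Sigma$ by reverse inclusion, the structure of the supports dictated by (a) and (b) forces the canonical injection $\Theta(i_\phi)$ to pair $\mathrm{im}(\phi)$ with $C\Lambda$ rather than $D\Lambda$, so that after composing with $B(M, M\Lambda)$ and $\Theta^{-1}(q_\phi)$ one obtains the matching $A \leftrightarrow C$. However, checking the nested condition of Lemma \ref{hominterval} shows $\mathrm{Hom}(C, A\Lambda) = 0$, so $A$ and $C$ cannot be $(\Lambda, \Lambda)$-interleaved; the matched pair $(A, C)$ thus has height strictly greater than $\epsilon$. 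To force both induced-matching triangles (not merely the one starting at $A$) to fail, we pass as in Example \ref{counterexample_1} to $I = A \oplus (C \oplus D)$ and $M = (C \oplus D) \oplus A$ interleaved by $\phi \oplus \psi$ and $\psi \oplus \phi$. The main obstacle is nailing down the explicit supports of $A, C, D$: a careful case analysis is needed to determine which of (i.), (ii.) fails, and then the endpoints of $A\Lambda, D\Lambda, A\Lambda^2, D\Lambda^2$ must be read off in terms of $x_{i-1}, x_i, x_{i+1}, x_l, x_{l+1}, \Lambda(x_{i+1}), \Lambda(x_{l+1})$, after which both verifications above reduce to routine applications of Lemma \ref{hominterval}.
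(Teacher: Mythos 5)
Your proposal correctly identifies the paper's strategy — generalize Example~\ref{counterexample_1}, set $\epsilon = x_l - x_i$ and $\Lambda = \Lambda_\epsilon$, build convex modules $A, C, D$, interleave $A$ with $C\oplus D$ via $\phi = \Phi_{A,D\Lambda}$ and $\psi = \Phi_{D,A\Lambda}$, show the induced matching pairs $A$ with $C$, observe $\Phi_{C,A\Lambda}=0$ while $W(A)>\epsilon$, then pass to $I = A\oplus(C\oplus D)$, $M = (C\oplus D)\oplus A$ — so the skeleton is right. However, you defer precisely the step that makes the proof nontrivial. The paper commits to explicit supports: $A\sim[x_i,\Lambda(x_{l+1})]$, $C\sim[x_i,x_l]$, and $D\sim[x_l,x_{l+t'}]$ where $1\leq t'<t$ is \emph{maximal} with $\Lambda(x_{l+t'})=\Lambda(x_{l+1})$. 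Your description of supports ``sitting on $x_{i-1},x_i,x_l,x_{l+1}$ and their $\Lambda$-images'' does not match ($x_{i-1}$ is not an endpoint of any of $A,C,D$), and the delicate choice of $t'$ — which guarantees $x_{l+t'}$ is exactly the upper endpoint of $A\Lambda$, so that $\psi=\Phi_{D,A\Lambda}$ is a genuine diagonal component — is absent. Without these choices the verifications you describe cannot actually be carried out, and you acknowledge this yourself when you call nailing down the supports ``the main obstacle.''

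You also misattribute the role of hypothesis (c). With the paper's $A\sim[x_i,\Lambda(x_{l+1})]$, one has $\Lambda^2(x_i)=\Lambda(x_l)\leq\Lambda(x_{l+1})$ by monotonicity, so $x_i$ lies in the support of $A\Lambda^2$ and $W(A)>\epsilon$ holds \emph{unconditionally}; (c) is not needed for it. What (c) actually buys is control over the lower endpoint of $C\Lambda$: since $\Lambda(x_{i-1})>x_{i-1}$ forces $\Lambda(x_{i-1})\geq x_i$, the lower endpoint of $C\Lambda$ is at most $x_{i-1}$, while $D\Lambda\sim[x_i,x_i]$ is a simple. Thus, under the reverse-inclusion enumeration of $\Sigma_{[-,x_i]}$, the interval $C\Lambda$ strictly precedes $D\Lambda$, and the canonical injection $\Theta(i_\phi)$ sends $\mathrm{im}(\phi)\cong[x_i,x_i]$ to $C\Lambda$ rather than $D\Lambda$. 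This is the one place (c) enters, and you instead credited (a) and (b) with it. Similarly, verifying the interleaving triangle at $C\oplus D$ requires $W(C)\leq\epsilon$ in addition to $W(D)\leq\epsilon$ (the $C$-component of $\phi\Lambda\circ\psi$ is zero, so the natural map $C\to C\Lambda^2$ must also vanish), and your proposal only addresses $D$. These are genuine gaps rather than stylistic differences: filling them in \emph{is} the proof.
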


\begin{proof}
By the above remarks, let $x_i<x_l$ be such that the translation $\Lambda=\Lambda_{(x_l-x_i)}$ has the properties $x_{l+1}<\Lambda(x_{i+1})$, $\Lambda(x_{l+1})<\Lambda(x_{l+t})$ and $x_{i-1}$ is not fixed by $\Lambda$.  Let $\epsilon=x_l-x_i$, and let $\Lambda=\Lambda_\epsilon$. Consider the following convex modules, $A\sim[x_i,\Lambda(x_{l+1})],$ $C\sim[x_i,x_l]$, and $D\sim[x_l,x_{l+t'}],$ where $1\leq t'<t$ is maximal such that $\Lambda(x_{l+t'})=\Lambda(x_{l+1})$. Note that the vertex $x_{l+t'}$ is also the upper endpoint of $A\Lambda$.

We then define the $(\Lambda_\epsilon,\Lambda_\epsilon)$-interleaving between $A$ and $C\oplus D$ by the diagonal morphisms $\phi=\Phi_{A,D\Lambda}$, $\psi=\Phi_{D,A\Lambda}$. One easily checks that this is indeed an interleaving. However, the induced matching corresponding to the triangle beginning at $\phi$ matches $A$ with $C$.  Clearly $A$ and $C$ are not $(\Lambda,\Lambda)$-interleaved, as $W(A)>\epsilon$ but $\Phi_{C,A\Lambda}=0$.  Proceeding as in Example \ref{counterexample_1} by setting $I = A \oplus (C\oplus D), M = (C \oplus D) \oplus A$ with $\phi' = \phi \oplus \psi$ and $\psi' = \psi \oplus \phi$ we produce an interleaving where both induced matchings have strictly larger height.
\end{proof}

\begin{center}
\includegraphics[scale=1]{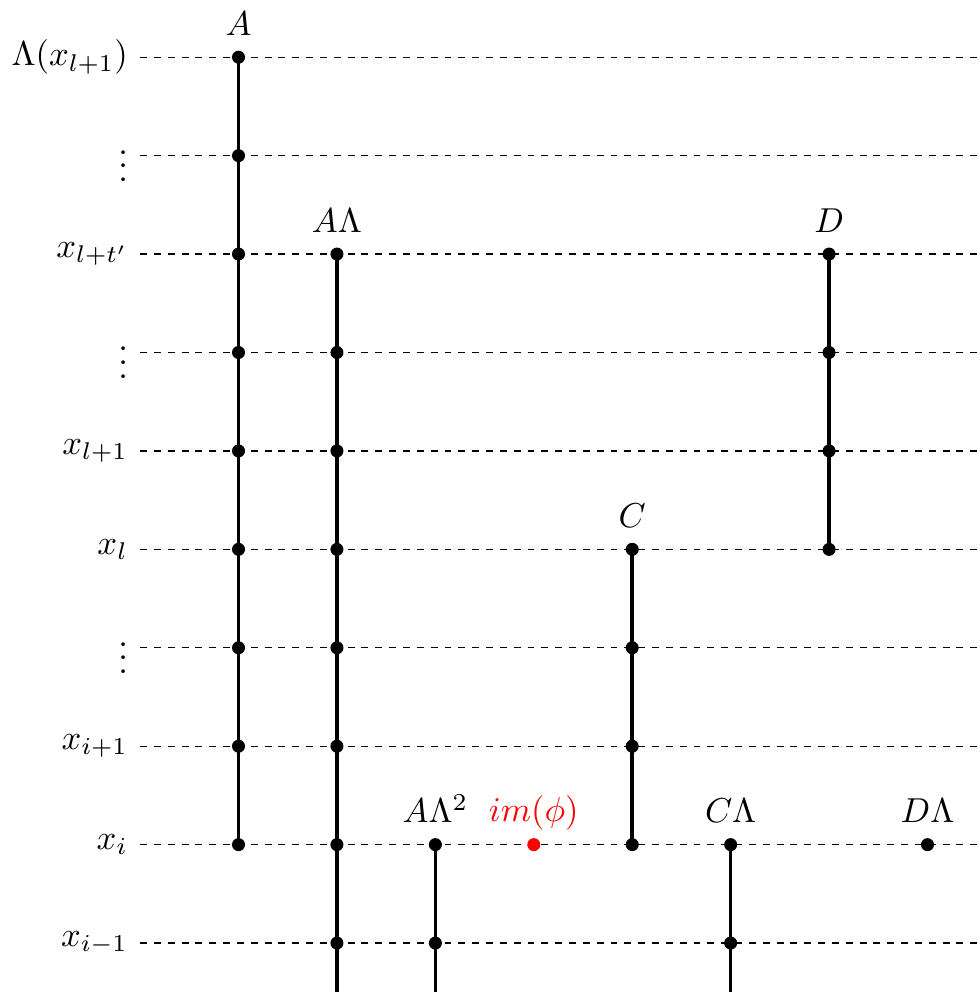}
\end{center}

This analysis shows that proof of the "matching theorem" is likely to fail for the poset $P_X$.  Therefore, it is necessity (at this point) to enlarge the category to obtain an isometry on $A(P_X)$-mod in the sense of Theorem \ref{shift_isometry_theorem}.

\vspace{1 in}

\bibliography{master_bib}
\bibliographystyle{alpha}

\end{document}